\newcommand{\norm}[1]{\left\|#1\right\|}
\newcommand{\innOm}[2]{(#1,#2)_\Omega}
\newcommand{\innGa}[2]{\langle #1,#2\rangle_\Gamma}
\newcommand{\duaGa}[2]{[#1,#2]_\Gamma}
\theoremstyle{plain}% Theorem-like structures provided by amsthm.sty
\newtheorem{theorem}{Theorem}[section]
\newtheorem{lemma}{Lemma}[section]
\newtheorem{corollary}{Corollary}[section]
\newtheorem{proposition}[theorem]{Proposition}
\theoremstyle{definition}
\newtheorem{example}[theorem]{Example}
\theoremstyle{remark}
\newtheorem{remark}[theorem]{Remark}
\numberwithin{equation}{section}
	\title{A New Global Divergence Free and  Pressure-Robust  HDG  Method for  Tangential   Boundary Control of Stokes Equations\thanks{G. Chen is supported by National Natural Science Foundation of China (NSFC) under grant no. 11801063  and no. 121713413,
				the Fundamental Research Funds for the Central Universities grant no. YJ202030. W. Gong is supported by the Strategic Priority Research Program of Chinese Academy of Sciences (Grant No. XDB 41000000), the National Key Basic Research Program (Grant No. 2018YFB0704304) and the National Natural Science Foundation of China (Grant No. 12071468 and 11671391). M. Mateos is supported by MCIN/ AEI/10.13039/501100011033/ under research project PID2020-114837GB-I00. J.\ Singler is supported by the US National Science Foundation (NSF) under grant number 2111421. Y.\ Zhang is supported by the US National Science Foundation (NSF) under grant number 2111315.}}
\author{%
{\sc Gang Chen}\thanks{ School of Mathematics, Sichuan University, Chengdu, China {\tt cglwdm@scu.edu.cn}}\and
{\sc Wei Gong}\thanks{ NCMIS \& LSEC, Institute of Computational Mathematics and Scientific/Engineering Computing, Academy of Mathematics and Systems Science, Chinese Academy of Sciences, Beijing, China {\tt wgong@lsec.cc.ac.cn}}\and
{\sc Mariano Mateos}\thanks{ Dpto.\ de Matem\'aticas. Universidad de Oviedo,
			Campus de Gij\'on, Spain {\tt mmateos@uniovi.es}}\and
{\sc John R.\ Singler}\thanks{ Department of Mathematics and Statistics, Missouri University of Science and Technology,
			Rolla, MO, USA {\tt singlerj@mst.edu}}\and
{\sc Yangwen Zhang}\thanks{ Department of Mathematical Science, Carnegie Mellon University,
			Pittsburgh, PA, USA {\tt yangwenz@andrew.cmu.edu}}
}
\begin{document}

\maketitle
	\begin{abstract}
		In {[ESAIM: M2AN, 54(2020), 2229-2264]}, we proposed an HDG method to approximate the solution of a tangential boundary control problem for the Stokes equations and obtained an optimal convergence rate for the optimal control {that reflects its global regularity}. However, the error estimates depend on the pressure, and the velocity is not divergence free. The importance of pressure-robust numerical methods for fluids was addressed by John et al.\   {[SIAM Review, 59(2017), 492-544]}. In this work, we devise a new HDG method to approximate the solution of the Stokes tangential boundary control problem; the HDG method is also of independent interest for solving the Stokes equations. This scheme yields a $\bm H(\textup{div})$ conforming, globally divergence free, and pressure-robust {solution}.  To the best of our knowledge, this is the first time such a numerical scheme has been obtained for an optimal boundary control problem for the Stokes equations. We also provide numerical experiments to show the performance of the new HDG method and the advantage over the non pressure-robust scheme.
	\end{abstract}
	
\begin{quote}\textbf{Keywords} Dirichlet optimal control; Stokes system; Hybridizable Discontinous Galerkin method; pressure-robust method
\end{quote}	

\begin{quote}
\textbf{AMS Subject classification: }
35Q35 % PDEs in connection with fluid mechanics
49M25 % Discrete approximations in optimal control
65N30 % Finite element, Rayleigh-Ritz and Galerkin methods for boundary value problems involving PDEs
\end{quote}

	\section{Introduction}
	Control of fluid flows modeled by Stokes or Navier-Stokes equations is an important area of research that has undergone major developments %both theoretically and computationally
in the recent past.  The model poses many theoretical and computational challenges and there is an extensive body of literature devoted to this subject; see, e.g., \cite{MR1613897,MR1871460,MR1885709,MR2051068,MR2326293,MR2974748,MR3233093,MR3357635,MR3663000}.
	In \cite{GongStokes_Tangential1} we investigated an HDG discretization for the tangential boundary control of a fluid governed by the Stokes system and {proved optimal error estimates with respect to the global regularity of the optimal control}; however, the numerical method is not pressure-robust, i.e., the discretization {errors} depend on the norm of the pressure.

  As pointed {out} by John et al.\ in the 2017 review {article}  \cite{MR3683678}, many mixed finite element methods, such as  Taylor-Hood finite element, Crouzeix-Raviart and  MINI elements are not pressure-robust.
The key for a numerical scheme to be pressure-robust is the way the null divergence condition is discretized. In the {above} mentioned review, at least three ways to obtain pressure-robust mixed methods are described: building $H^1$-conforming divergence-free schemes, using discontinuous Galerkin methods, or committing some variational crime.  In 2014, Linke \cite{MR3133522} slightly {modified} the classical lowest order Crouzeix-Raviart element with a variational crime by noticing that the Raviart-Thomas interpolation -- see \eqref{RT} below -- maps divergence-free vector fields onto divergence-free discrete vector fields. In this way, the discrete velocity of the numerical solution is not affected when the external force is modified with a gradient field, which is a property that is satisfied by the continuous solution: if $-\Delta\bm y +\nabla p = \bm f$ and $\nabla\cdot\bm y = 0$, then for any scalar field $\phi$, $-\Delta\bm y+\nabla(p+\phi) = \bm f+\nabla\phi$, $\nabla\cdot \bm y=0$ and only the pressure is modified. In 2007, Cockburn et al.~\cite{MR2304270} had already studied a DG method for the Navier-Stokes equations which yields divergence-free solutions.

%In 2004, Dawson et al. \cite{MR2055253} investigated numerical schemes for flow problems coupled with the transport equation. They showed that, for the methods used for the transport equation to satisfy certain desirable properties --namely, being zeroth-order accurate and globally conservative--, the method used for the coupled flow must satisfy a compatibility condition, which in some cases means that the discrete flow must be globally conservative.

Hybridizable discontinuous Galerkin (HDG) methods were proposed by Cockburn et al. in \cite{MR2485455} as an improvement of traditional DG methods; for a recent didactic exposition, see, e.g., \cite{Cockburn2021}.
  The HDG algorithm proposed and analyzed in our work \cite{GongStokes_Tangential1} is not pressure-robust:  although the convergence rate is optimal, the magnitude of the error strongly depends on the pressures; see Example \ref{example1} below. %In this work, we devise a numerical method that preserves the zero-divergence condition of the velocity at the discrete level to guarantee that the velocity and control errors are not dependent on the pressures.

	In 2016, Lehrenfeld and Sch\"{o}berl \cite{MR3511719} first proposed a pressure-robust HDG method for  the Navier-Stokes equations and used  a divergence-conforming velocity space; \textcolor{black}{see also Lederer, Lehrenfeld, and Sch\"{o}berl \cite{MR3942180} for an improvement of this method.}  Recently,
%Kirk and Rhebergen in \cite{MR4019980}
\textcolor{black}{Rhebergen and Wells, in \cite{MR3833698}}, used standard cell and facet discontinuous Galerkin spaces {that do} not involve a divergence-conforming finite element space for the velocity. They obtained pressure-robust scheme for {the} Navier-Stokes equation; \textcolor{black}{see also Kirk and Rhebergen in \cite{MR4019980} for a detailed analysis of this method}.  For other pressure-robust HDG methods, see \cite{baierreinio2021analysis,RhebergenWells2020,rhebergen2021preconditioning,MR4161756}.
In this paper, we propose a new HDG scheme with less degrees of freedom than that of \cite{MR3511719}, apply it to a tangential boundary control problem governed by the Stokes equation, and prove that the method is pressure-robust.

% \textcolor{red}{Despite the large amount of existing work on numerical methods for fluid flow control problems, the authors are not aware of any existing work that ensures a globally divergence free velocity. As a first attempt, we propose a pressure-robust scheme for solving the following tangential boundary control problem:}

Despite the large amount of existing work on numerical methods for fluid flow control problems, the authors are only aware of one work dealing with pressure-robustness in the context of optimal control problems, the very recent preprint \cite{Merdon-Wollner2022}, where a distributed control problem governed by the Stokes equation is discretized by means of a pressure-robust variant of a classical finite element discretization. We, on the other hand,  propose a pressure-robust HDG scheme for solving the following tangential boundary control problem:
	\begin{align}\label{Ori_problem1}
		\min\limits_{\bm u\in \bm U} J(\bm u)=\frac{1}{2}\|\bm y_{\bm u}-\bm y_{d}\|^2_{\bm L^{2}(\Omega)}+\frac{\gamma}{2}\| \bm u\|^2_{\bm U},
	\end{align}
	where $\bm y_{\bm d}$ is the desired state, $\bm y_{\bm u}$ is the unique solution in the transposition sense (see, e.g., \cite[Defintion 2.3]{GongStokes_Tangential1}) of
	\begin{align}\label{Ori_problem2}
		-\Delta\bm y+\nabla p =\bm f \;  \text{in}\ \Omega, \;\;\;\;
		\nabla\cdot\bm y=0 \;\  \text{in}\ \Omega, \;\;\;\;
		\bm y=\bm u \  \text{on}\  \Gamma, \;\;\;\;
		\int_{\Omega} p=0,
	\end{align}
    $\gamma$ is a positive constant, and we take the control space
	\[\bm U = \{\bm u = u\bm\tau:\, u\in L^2(\Gamma)\}\]
	with norm $\|\bm u\|_{\bm U} = \|u\|_{L^2(\Gamma)}$ and  $\bm \tau$ the unit tangential vector.
	
	Formally, the optimal control $u\in L^2(\Gamma)$ and the optimal state $\bm y \in  \bm L^2(\Omega)$ satisfy the {first order} optimality system
	\begin{subequations}\label{optimality_system}
		\begin{gather}
			-\Delta \bm y + \nabla p = \bm f \; \textup{in} \ \Omega,\;\;\;\;\nabla\cdot\bm y = 0\; \textup{in} \ \Omega,\;\;\;\;\bm y = u\bm \tau \; \textup{on} \ \Gamma,\\
			-\Delta \bm z -\nabla q = \bm y - \bm y_d \; \textup{in} \ \Omega,\;\;\;\;
			\nabla\cdot\bm z = 0\;  \textup{in} \ \Omega,\;\;\;\;
			\bm z = 0   \; \textup{on} \ \Gamma,\\
			%
			%
			%
%	\textcolor{red}{		\partial_{\bm n}  \bm z = \gamma  u\bm\tau \; \textup{on} \  \Gamma.}\nonumber\\
		\textcolor{black}{	\partial_{\bm n}  \bm z\cdot \bm\tau = \gamma  u \; \textup{on} \  \Gamma.}
		\end{gather}
	\end{subequations}	
	In \cite{GongStokes_Tangential1},  we proved that  the optimal control is indeed determined by a {very weak} formulation of the above optimality system and we proved a regularity result for the solution in 2D {polygonal domains}.  The optimal control satisfies (see \cite[Theorem 2.4]{GongStokes_Tangential1}) $u\in H^s(\Gamma)$ with $s\in(0,3/2)$. We utilized an existing HDG method to discretize the optimality system and obtained the following a priori error estimate (see \cite[Theorem 4.1]{GongStokes_Tangential1}):
	\begin{align}\label{error_u}
		\| u -  u_h\|_{L^2(\Gamma)}  \le {C h^{s}(\|\bm y\|_{\bm H^{s+1/2}(\Omega)}+\|\bm z\|_{\bm H^{s+3/2}(\Omega)}+\|p\|_{H^{s-1/2}(\Omega)}+\|q\|_{H^{s+1/2}(\Omega)}+\|u\|_{H^s(\Gamma)})}.
	\end{align}
	The error estimate \eqref{error_u} implies that the error is dependent on the pressure $p$ and dual pressure $q$.

	In this paper, we propose a new HDG method to revisit  the problem \eqref{Ori_problem1}-\eqref{Ori_problem2}.  Our new HDG method is pressure-robust; i.e., we obtain the a priori error estimate (see Theorem \ref{main_res}):
	\begin{align}\label{error_u2}
	\| u -  u_h\|_{L^2(\Gamma)}  \le C h^{s}(\|\bm y\|_{\bm H^{s+1/2}(\Omega)}+\|\bm z\|_{\bm H^{s+3/2}(\Omega)}).
	\end{align}
	The error estimate \eqref{error_u2} shows the same convergence rate as obtained in \cite{GongStokes_Tangential1}, but the errors no longer depend on the pressures.

\textcolor{black}{As in \cite{MR3833698}, our} method  introduces a numerical trace to approximate the pressure on the boundary edge, \textcolor{black}{but in that reference,} the authors use polynomials of degree $k+1$ to approximate the trace of the velocity and we use polynomials of degree $k$. Hence, the degrees of freedoms of our scheme are  less than that in \cite{MR3833698}. \textcolor{black}{The price, of course, is that we obtain lower orders of convergence than those obtained in \cite{MR4019980} for the method proposed in \cite{MR3833698}, but on the other hand, our error estimates are valid for problems with very low regularity solutions, as the ones we find when solving Dirichlet control problems}.

\textcolor{black}{We find that a pressure-robust method is specially appropriate for the tangential control problem that we address. Notice that if we perturb $\bm y_d$ with a conservative field $\nabla\phi$ for some scalar function $\phi$, the optimal solution would not change at all. We sould just replace $q$ by $q+\phi$ to obtain the solution of the optimality system.}

    The plan of this paper is as follows. In Section \ref{sec:Regularity and HDG Formulation} we present the functional framework, the optimality system for the control problem, and the new HDG formulation; we prove that, for any given control, both the discrete velocity and adjoint velocity are divergence free.  Section \ref{sec:analysis} is devoted to the error analysis; we present and prove our main result. The scheme of our proof largely follows the structure in our previous work \cite{GongStokes_Tangential1}, but here we needed to use new techniques to show in every auxiliary lemma that the obtained estimates are independent of the pressure. Finally, in  Section \ref{Numerical_experiments} we provide the results of two numerical experiments to compare the performance of the present pressure-robust method with the method in \cite{GongStokes_Tangential1}.

	\section{Background:  Regularity  and HDG Formulation }
	\label{sec:Regularity and HDG Formulation}
	In this section, we briefly review the regularity results for the tangential boundary control problem and give the HDG formulation.
	
	First, we define some  notation{. Let $\Omega$ be a bounded  polygonal domain. We} use the standard notation  $H^{m}(\Omega)$ {to} denote the Sobolev space with norm $\|\cdot\|_{m,\Omega}${. In} many places, we  use  $\|\cdot\|_{m}$ to replace $\|\cdot\|_{m,\Omega}$ if the context makes the norm clear.   Let  $\mathbb H^m(\Omega) = [H^{m}(\Omega)]^{2\times 2}$, $\bm H^{m}(\Omega) = [H^{m}(\Omega)]^2$ and $\bm H_0^1(\Omega) =\{\bm v\in \bm H^1(\Omega); \bm v = 0 \ \textup{on} \ \Gamma \}$. Let $\innGa{\cdot}{\cdot}$ denote the inner product in $L^2(\Gamma)$ and let $\duaGa{\cdot}{\cdot}$ denote the duality product between $H^{-s}(\Gamma)$ and $H^s(\Gamma)${.  We} introduce  the spaces
	\begin{align*}
		\bm V^s(\Omega) &= \{\bm y\in \bm H^s(\Omega) : \nabla \cdot \bm y = 0, \  \duaGa{\bm y\cdot\bm n}{1}=0\},\mbox{ for }s\geq 0,\\
		\bm V^s_0(\Omega) &= \{\bm y\in \bm H^s(\Omega) : \nabla \cdot  \bm y = 0, \  \bm y=0\mbox{ on }\Gamma\},\mbox{ for }s>1/2,\\
		\bm V^s(\Gamma) &=  \{\bm u\in \bm H^s(\Gamma) : \innGa{\bm u\cdot\bm n}{1} = 0\},\mbox{ for }0\leq s<3/2.
	\end{align*}
	
	We denote the $L^2$-inner products on $ \mathbb L^2(\Omega) $,   $\bm L^2(\Omega)$, $ L^2(\Omega)$ and $ \bm L^2(\Gamma)$ by
	\begin{align*}
		(\mathbb L,\mathbb G)_{\Omega} = \sum_{i,j=1}^2 \int_{\Omega}  L_{ij} G_{ij}, \;\;  (\bm y,\bm z)_{\Omega} = \sum_{j=1}^2 \int_{\Omega}  y_j z_j,\;\;
		(p,q)_{\Omega} =  \int_{\Omega} pq, \;\;  \langle \bm y,\bm z\rangle_{\Gamma} = \sum_{j=1}^2\int_{\Gamma}  y_j  z_j.
	\end{align*}
	Define the spaces $\mathbb H(\text{div};\Omega)$ and $ L_0^2(\Omega)$ as
	\begin{align*}
		\mathbb H(\text{div},\Omega) = \{\mathbb K \in \mathbb L^2(\Omega), \nabla\cdot \mathbb K \in \bm L^2(\Omega)\}, \quad
		L_0^2(\Omega) = \left\{p \in L^2(\Omega),  (p,1)_{\Omega} = 0\right\}.
	\end{align*}

	\subsection{Regularity}
	\label{sec:regularity}
	In \cite[Theorem 2.8 and Corollary 2.9]{GongStokes_Tangential1}, we proved the following well-posedness and regularity result for the tangential Dirichlet boundary control problem \eqref{Ori_problem1} - \eqref{Ori_problem2}. Set $\mathbb L = \nabla \bm y$ and $\mathbb G = \nabla \bm z$, let $\omega$ be the largest interior angle of $\Gamma$, and let $\xi \in(0.5,4]$ {be} the real part of the smallest root different from zero  of the equation
	\begin{align}\label{singular_ex}
	\sin^2(\lambda\omega)-\lambda^2\sin^2\omega=0.
	\end{align}
	It is known that $ \xi > \pi / \omega $ \textcolor{black}{if} $ \omega < \pi $ and $ 0.5 < \xi < \pi/\omega $ if $ \omega > \pi $.
	
	% Now we can state the regularity for the solution of problem \eqref{Ori_problem1}-\eqref{Ori_problem2}.
	\begin{theorem}\label{regularity_res}
		If $\Omega$ is a convex polygonal domain, $\bm f \in\bm L^2(\Omega)$ and $\bm y_d\in \bm H^{\min\{2,\xi\} }(\Omega)$, then there is a unique solution $u\in L^2(\Gamma)$ of problem \eqref{Ori_problem1}-\eqref{Ori_problem2}. The solution $ u $ satisfies
		$
		u\in H^s(\Gamma)
		%\mbox{$u\in H^s(\Gamma)$ for all $1/2< s<\min\{3/2,\xi-1/2\}$}
		$
		for all $1/2< s<\min\{3/2,\xi-1/2\}$ and there exists
		\begin{align*}
			\bm y &\in \bm V^{s+1/2}(\Omega),  &  \mathbb{L} &\in \mathbb  H^{s-1/2}(\Omega),  &  p &\in H^{s-1/2}(\Omega)\cap L_0^2(\Omega),\\
			\bm z &\in \bm V_0^{r+1}(\Omega),  &  \mathbb{G} &\in \mathbb H^r(\Omega),  &  q &\in H^{r}(\Omega)\cap L_0^2(\Omega)
		\end{align*}
		for all $1 <r<\min\{3,\xi\}$, and $\mathbb L - p\mathbb I \in \mathbb H(\textup{div}, \Omega)$ such that
		\begin{subequations}
			\begin{align}
				\innOm{\mathbb{L}}{\mathbb{T}}+\innOm{\bm y}{\nabla \cdot\mathbb{T}} &= \innGa{u\bm\tau}{\mathbb{T}\bm n}, \label{TCOSE1}\\
				-\innOm{\nabla\cdot(\mathbb{L}- p\mathbb I)}{ \bm v} &=\innOm{\bm f}{\bm v},\label{TCOSE2} \\
				(\nabla\cdot\bm y, w)_{\Omega} &=0,\label{TCOSE3}\\
				\innOm{\mathbb{G}}{\mathbb{T}} +\innOm{\bm z}{\nabla\cdot\mathbb{T}}&=0,\label{TCOASE1}\\
				-\innOm{\nabla\cdot(\mathbb{G} + q\mathbb I)}{\bm v} &= \innOm{\bm y-\bm y_d}{\bm v}, \label{TCOASE2}\\
				\innOm{\nabla \cdot \bm z}{ w} &=0, \label{TCOASE3}\\
				\innGa{\gamma  u\bm\tau-\mathbb{G} \bm n}{\mu\bm{\tau}} &= 0 \label{TCOOC}
			\end{align}
			for all $(\mathbb T, \bm v, w, \mu )\in \mathbb H(\textup{div}, \Omega)\times \bm L^2(\Omega)\times L_0^2(\Omega)\times L^2(\Gamma)$.
		\end{subequations}
		Moreover,
		\begin{equation}\label{eqn:opt_control_local_reg}
			u\in \prod_{i=1}^{m} H^{r-1/2}(\Gamma_i)\mbox{ for all }r<\min\{3,\xi\},
		\end{equation}
		{where $\Gamma_i$ denotes the smooth segment of $\Gamma$ such that $\Gamma=\bigcup\limits_{i=1}^m\Gamma_i$.}
	\end{theorem}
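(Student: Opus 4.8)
The plan is to obtain this statement as a consequence of \cite[Theorem 2.8 and Corollary 2.9]{GongStokes_Tangential1}, followed by a routine recasting of the strong optimality system \eqref{optimality_system} into the mixed variational form \eqref{TCOSE1}--\eqref{TCOOC}. Existence and uniqueness of the optimal control is standard: $J$ is continuous, strictly convex, and coercive on $L^2(\Gamma)$ (the term $\frac{\gamma}{2}\norm{\bm u}_{\bm U}^2$ gives coercivity and strict convexity, and $\bm u\mapsto\bm y_{\bm u}$ is affine and continuous from $\bm L^2(\Gamma)$ into $\bm L^2(\Omega)$ in the transposition sense), so a unique minimizer exists; strict convexity makes the first-order conditions both necessary and sufficient, so the minimizer is exactly the tuple solving \eqref{optimality_system}. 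The Sobolev regularities of $u$, $\bm y$, $p$, $\bm z$, $q$ in the stated scales, as well as the local regularity \eqref{eqn:opt_control_local_reg}, are precisely the content of \cite[Theorem 2.8 and Corollary 2.9]{GongStokes_Tangential1}, whose proof is a bootstrap between the very weak Stokes solution of the state equation with boundary datum $u\bm\tau\in H^s(\Gamma)$ and the Stokes regularity estimate for the adjoint equation with $\bm L^2(\Omega)$ right-hand side (using $\bm y_d\in\bm H^{\min\{2,\xi\}}(\Omega)$ and $\bm z|_\Gamma=0$), the gain at each step being capped by the corner exponent $\xi$ of \eqref{singular_ex}. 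We import all of this.

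Set $\mathbb L=\nabla\bm y$ and $\mathbb G=\nabla\bm z$. Then $\bm y\in\bm V^{s+1/2}(\Omega)$ gives $\mathbb L\in\mathbb H^{s-1/2}(\Omega)$ and $\bm z\in\bm V_0^{r+1}(\Omega)$ gives $\mathbb G\in\mathbb H^{r}(\Omega)$. Since $\nabla\cdot(p\mathbb I)=\nabla p$ and $\nabla\cdot\mathbb L=\Delta\bm y$, the state momentum equation is $-\nabla\cdot(\mathbb L-p\mathbb I)=\bm f\in\bm L^2(\Omega)$, so $\mathbb L-p\mathbb I\in\mathbb H(\textup{div},\Omega)$; likewise $-\nabla\cdot(\mathbb G+q\mathbb I)=\bm y-\bm y_d\in\bm L^2(\Omega)$, so $\mathbb G+q\mathbb I\in\mathbb H(\textup{div},\Omega)$.

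I then derive \eqref{TCOSE1}--\eqref{TCOOC} by integration by parts. Equations \eqref{TCOSE2}, \eqref{TCOSE3}, \eqref{TCOASE2}, \eqref{TCOASE3} are the equations of \eqref{optimality_system} tested against $\bm v\in\bm L^2(\Omega)$ and $w\in L_0^2(\Omega)$, legitimate because $\mathbb L-p\mathbb I$ and $\mathbb G+q\mathbb I$ lie in $\mathbb H(\textup{div},\Omega)$ and $\nabla\cdot\bm y=\nabla\cdot\bm z=0$. Equations \eqref{TCOSE1} and \eqref{TCOASE1} follow from the $\mathbb H(\textup{div},\Omega)$--$\bm H^1(\Omega)$ Green formula $\innOm{\nabla\bm w}{\mathbb T}+\innOm{\bm w}{\nabla\cdot\mathbb T}=\duaGa{\bm w}{\mathbb T\bm n}$ with $\bm w=\bm y$ and $\bm w=\bm z$ (both in $\bm H^1(\Omega)$ since $s+1/2>1$ and $r+1>1$), combined with the boundary conditions $\bm y=u\bm\tau$ and $\bm z=0$ on $\Gamma$; the trace of $\bm y\in\bm H^{s+1/2}(\Omega)$ forces $u\bm\tau\in\bm H^{1/2}(\Gamma)$, so the pairing on the right of \eqref{TCOSE1} is well posed. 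Finally, for \eqref{TCOOC} one uses $\mathbb G\bm n=\partial_{\bm n}\bm z\in\bm L^2(\Gamma)$ (since $\mathbb G\in\mathbb H^{r}(\Omega)$ with $r>1$) together with the optimality condition $\partial_{\bm n}\bm z\cdot\bm\tau=\gamma u$ from \eqref{optimality_system} (the normal part $q\bm n$ is tangentially inert), which gives $\innGa{\gamma u\bm\tau-\mathbb G\bm n}{\mu\bm\tau}=\innGa{(\gamma u-\partial_{\bm n}\bm z\cdot\bm\tau)\mu}{1}=0$ for every $\mu\in L^2(\Gamma)$.

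The genuine difficulty---the corner-singularity analysis producing $\xi$ and the resulting regularity ceiling $s<\min\{3/2,\xi-1/2\}$, $r<\min\{3,\xi\}$---is not reproved here; it is \cite{GongStokes_Tangential1}. The only point that requires attention in the present reformulation is verifying that every boundary pairing in \eqref{TCOSE1}--\eqref{TCOOC} is admissible in the low-regularity regime $s>1/2$: that $\mathbb L-p\mathbb I$ and $\mathbb G+q\mathbb I$ have normal traces in $\bm H^{-1/2}(\Gamma)$ (automatic once they lie in $\mathbb H(\textup{div},\Omega)$), that $\bm y|_\Gamma=u\bm\tau\in\bm H^{1/2}(\Gamma)$ (trace theorem plus $s+1/2>1$), and that $\mathbb G\bm n\in\bm L^2(\Gamma)$ ($\mathbb G\in\mathbb H^r(\Omega)$, $r>1$)---all immediate from the imported Sobolev regularities.
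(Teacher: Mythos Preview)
Your proposal is correct and matches the paper's approach: the paper does not give a proof of this theorem at all, but simply states it as a direct quotation of \cite[Theorem 2.8 and Corollary 2.9]{GongStokes_Tangential1}. Your additional verification that the strong optimality system \eqref{optimality_system} can be recast in the mixed form \eqref{TCOSE1}--\eqref{TCOOC} via integration by parts, and that all boundary pairings are well defined under the imported regularity, is routine and accurate; the paper leaves this implicit.
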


	\subsection{The HDG Formulation}
	%\subsection{Mesh and Approximation spaces}
	\label{sec:HDG}
	
	We use the same notation as in \cite{GongStokes_Tangential1} to describe the HDG method.   Let $ \{ \mathcal T_h \} $ be a {family of} conforming {and} quasi-uniform triangular {meshes} of $ \Omega $. This assumption on the meshes is stronger than in \cite{GongStokes_Tangential1}; there we assumed $ \{ \mathcal T_h \} $ is a {family of} conforming {and} quasi-uniform polygonal {meshes}. Let $\partial \mathcal{T}_h$ denote the set $\{\partial K: K\in \mathcal{T}_h\}$. For an element $K$ of the collection  $\mathcal{T}_h$, $e = \partial K \cap \Gamma$ is the boundary edge if the length of $e$ is non-zero. For two elements $K^+$ and $K^-$ of the collection $\mathcal{T}_h$, $e = \partial K^+ \cap \partial K^-$ is the interior edge between $K^+$ and $K^-$ if the length of $e$ is non-zero. Let $\mathcal E_h^o$ and $\mathcal E_h^{\partial}$ denote the set of interior and boundary edges, respectively. We denote by $\mathcal E_h$ the union of  $\mathcal E_h^o$ and $\mathcal E_h^{\partial}$. We  introduce various inner products:
	\begin{gather*}
		(\eta,\zeta)_{\mathcal{T}_h} = \sum_{K\in\mathcal{T}_h} (\eta,\zeta)_K, \quad
		(\bm\eta,\bm\zeta)_{\mathcal{T}_h} = \sum_{i=1}^2 (\eta_i,\zeta_i)_{\mathcal T_h}, \quad	(\mathbb L,\mathbb G)_{\mathcal{T}_h} = \sum_{i,j=1}^2 (L_{ij},G_{ij})_{\mathcal T_h},\\
		\langle \eta,\zeta\rangle_{\partial\mathcal{T}_h} = \sum_{K\in\mathcal T_h} \langle \eta,\zeta\rangle_{\partial K}, \quad \langle \bm \eta,\bm\zeta\rangle_{\partial\mathcal{T}_h} = \sum_{i=1}^2 \langle \eta_i,\zeta_i\rangle_{\partial \mathcal T_h}.
	\end{gather*}
	{The norms induced by the above inner products are defined accordingly.}
	
	Let $\mathcal{P}^k(D)$ denote the set of polynomials of degree at most $k$ on a domain $D$.  We introduce the following discontinuous finite element spaces:
	\begin{align*}
		\mathbb K_h&:=\{\mathbb L \in \mathbb L^2(\Omega):\mathbb L|_K\in[\mathcal P^k(K)]^{2\times 2}, \ \forall K\in\mathcal T_h\},\\
		\bm{V}_h  &:= \{\bm{v}\in \bm L^2(\Omega): \bm{v}|_{K}\in [\mathcal{P}^{k+1}(K)]^2, \   \forall K\in \mathcal{T}_h\},\\
		{W}_h  &:= \{{w}\in L^2(\Omega): {w}|_{K}\in \mathcal{P}^k(K),  \ \forall K\in \mathcal{T}_h\},\\
		\bm {M}_h  &:= \{{\mu}\in \bm L^2(\mathcal E_h): {\bm\mu}|_{e}\in [\mathcal{P}^k(e)]^2, \  \forall e\in \mathcal E_h\},\\
		{M}_h  &:= \{{\mu}\in L^2(\mathcal E_h^\partial): {\mu}|_{e}\in \mathcal{P}^k(e), \  \forall e\in \mathcal E_h^\partial\},\\
		{Q}_h  &:={ \{{\mu}\in L^2(\mathcal E_h): {\mu}|_{e}\in \mathcal{P}^{k+1}(e), \  \forall e\in \mathcal E_h\}}.
	\end{align*}

	Let  $\bm M_h(o)$ denote the space defined in the same way as $\bm M_h$, but with $\mathcal E_h$ replaced by $\mathcal E_h^o$. We use $\nabla \bm v$ and $ \nabla \cdot \mathbb L $ to denote the gradient of $ \bm v $ and the divergence of $ \mathbb L $ taken piecewise on each element $K\in \mathcal T_h$. Finally, we define
	\begin{align*}
		{W}_h^0  &= \left\{{w}\in L^2(\Omega): {w}|_{K}\in \mathcal{P}^k(K), \  \forall K\in \mathcal{T}_h \ \textup{and}\  (w,1)_{\Omega}  = 0\right\}.
	\end{align*}
	
	%To approximate the solution of the mixed weak form \eqref{TCOSE1}-\eqref{TCOOC}, the HDG formulation considered here is modified from Part I \cite{GongStokes_Tangential1} to avoid the estimation of $\mathbb L \in \mathbb H^{r_{\mathbb L}}(\Omega)$ and $p\in H^{r_p}(\Omega)$ on the boundary when $r_{\mathbb L}<1/2$ and $r_p<1/2$.
	
	The HDG method seeks approximate fluxes $\mathbb L_h,\mathbb G_h \in \mathbb {K}_h $, states $ \bm y_h, \bm z_h \in \bm V_h $, pressures  $p_h,q_h \in W_h^0$,  interior element boundary traces $ \widehat{\bm y}_h^o,\widehat{\bm z}_h^o \in \bm M_h(o) $ {and $\widehat{p}_h,\widehat{q}_h\in Q_h$}, and boundary control $ u_h\in  M_h$ satisfying
	\begin{subequations}\label{HDG_discrete2}
		\begin{align}
			(\mathbb L_h,\mathbb T_1)_{\mathcal{T}_h}+(\bm y_h,\nabla\cdot\mathbb T_1)_{\mathcal{T}_h}-\left\langle \widehat{\bm y}_h^o, \mathbb T_1 \bm{n}\right\rangle_{\partial\mathcal{T}_h \backslash\mathcal E_h^\partial} &=\langle u_h\bm \tau,\mathbb T_1 \bm n\rangle_{\mathcal E_h^\partial} \label{HDG_discrete2_a},\\
			-(\nabla\cdot\mathbb L_h,\bm v_1)_{\mathcal T_h}-( p_h,\nabla\cdot \bm v_1)_{\mathcal{T}_h}+\langle \widehat{p}_h,\bm v_1\cdot\bm n \rangle_{\partial\mathcal{T}_h} \nonumber\\
			+\langle h^{-1} P_{\bm M} \bm y_h, \bm v_1 \rangle_{\partial {\mathcal{T}_h}}-\langle  h^{-1} \widehat{\bm y}_h^o, \bm v_1\rangle_{\partial \mathcal{T}_h\backslash\mathcal E_h^\partial} &= (\bm f,\bm v_1)_{\mathcal{T}_h}+\langle h^{-1} u_h\bm \tau ,\bm v_1\rangle_{\mathcal E_h^\partial} ,  \label{HDG_discrete2_b}\\
			(\nabla\cdot\bm y_h, w_1)_{\mathcal T_h}&=0,\label{HDG_discrete2_c}\\
			\langle \bm y_h \cdot\bm n,\widehat{w}_1\rangle_{\partial\mathcal T_h }&=0\label{HDG_discrete2_d}
		\end{align}
		for all $(\mathbb{T}_1,\bm v_1,w_1,\widehat{w}_1)\in \mathbb K_h\times\bm{V}_h\times W_h^0\times Q_h$,
		\begin{align}
			(\mathbb G_h,\mathbb T_2)_{\mathcal{T}_h}+(\bm z_h,\nabla\cdot\mathbb T_2)_{\mathcal{T}_h}-\left\langle \widehat{\bm z}_h^o, \mathbb T_2 \bm{n}\right\rangle_{\partial\mathcal{T}_h\backslash \mathcal E_h^\partial} &=0, \label{HDG_discrete2_e}\\
			-(\nabla\cdot\mathbb G_h,\bm v_2)_{\mathcal T_h}+( q_h,  \nabla\cdot\bm{v}_2)_{\mathcal{T}_h}-\langle \widehat{q}_h,\bm v_2\cdot\bm n \rangle_{\partial\mathcal{T}_h}\nonumber\\
			+\langle h^{-1} P_{\bm M} \bm z_h, \bm v_2\rangle_{\partial {\mathcal{T}_h}}-\langle h^{-1} \widehat{\bm z}_h^o,\bm v_2\rangle_{\partial \mathcal{T}_h\backslash\mathcal E_h^\partial} &= (\bm y_h-\bm y_d,\bm{v}_2)_{\mathcal{T}_h},  \label{HDG_discrete2_f}\\
			(\nabla\cdot\bm z_h, w_2)_{\mathcal T_h}&=0,\label{HDG_discrete2_g}\\
			\langle \bm z_h \cdot\bm n,\widehat{w}_2\rangle_{\partial\mathcal T_h }&=0\label{HDG_discrete2_h}
		\end{align}
		for all $(\mathbb T_2,\bm v_2,w_2,\widehat w_2 )\in \mathbb K_h\times\bm{V}_h\times W_h^0\times Q_h$,
		\begin{align}
			{\langle {\mathbb L}_h \bm n - h^{-1} (\bm P_M\bm y_h - \widehat{\bm y}_h^o),\bm\mu_1\rangle_{\partial\mathcal T_h\backslash\mathcal E_h^\partial}}&=0\label{HDG_discrete2_i}
		\end{align}
		for all $\bm\mu_1\in \bm M_h(o)$,
		\begin{align}
			{\langle {\mathbb G}_h\bm n - h^{-1} (\bm P_M\bm z_h - \widehat{\bm z}_h^o),\bm\mu_2\rangle_{\partial\mathcal T_h\backslash\mathcal E_h^\partial}}&=0\label{HDG_discrete2_j}
		\end{align}
		for all $\bm\mu_2\in \bm M_h(o)$,
		\begin{align}
			{\langle {\mathbb G}_h  \bm n  - h^{-1} \bm P_M\bm z_h- \gamma  u_h\bm{\tau},  \mu_3\bm{\tau} \rangle_{\mathcal E_h^\partial} }&= 0\label{HDG_discrete2_k}
		\end{align}
		for all $\mu_3 \in  M_h $.  Here $\bm P_M$ denotes the standard $L^2$-orthogonal projection from $\bm L^2(\textcolor{black}{\mathcal E_h})$ onto $\bm M_h$; see \eqref{def_P_M} below. This completes the formulation of the HDG method.
	\end{subequations}
	
	\begin{remark}\label{Remark2.2}
{\color{black}Our method resembles the one introduced in \cite{MR3833698} and analyzed in \cite{MR4019980} in the sense that the numerical trace of the pressure plays the role of Lagrange multipliers enforcing continuity of the normal component of the velocity across element boundaries. Nevertheless, to approximate the trace of the velocity, we use polynomials of degree $k$ instead of $k+1$. In this way, our method has fewer degrees of freedom, but at the price of a lower order of convergence. This feature can be seen as a drawback when solving an uncontrolled Stokes problem or even a distributed control problem governed by the Stokes equation. But for the problem at hand the regularity of the solution is usually very low, see Theorem \ref{regularity_res}, and the order of convergence will be mainly limited by this fact, so it makes sense to use a method with suboptimal rates of convergence.}

%Recently,  {Kirk  and Rheberger\ in \cite{MR4019980}}  obtained a pressure-robust HDG method for the Navier-Stokes {equations}.  They {use} polynomials of degree $k+1$ to approximate the trace of the velocity and we use polynomials of degree $k$. Hence, the degrees of freedoms {of our scheme} are less than {that of \cite{MR4019980}}.

		%However,
\textcolor{black}{Notice also that }the HDG {method} developed in this paper
%is computationally more expensive
\textcolor{black}{has more degrees of freedom} than the scheme in \cite{GongStokes_Tangential1}, since we introduced two more numerical traces $\widehat p_h$ and $\widehat q_h$ to approximate the traces of the pressures $p_h$ and $q_h$, respectively \textcolor{black}{in order to obtain a pressure robust method}.
	\end{remark}
	
	Next, we show that the discrete system \eqref{HDG_discrete2} yields a \emph{globally} divergence free state $\bm y_h$ and dual state $\bm z_h$.
	
	\begin{proposition}  \label{prof_divergence_free}
		Let $\bm{y}_h$ and $\bm{z}_h$ be the solutions of \eqref{HDG_discrete2}, then we have $\bm{y}_h, \bm{z}_h\in\bm{H}(\textup{div};\Omega)$ and   $\nabla\cdot\bm y_h=\nabla\cdot\bm{z}_h = 0$.
	\end{proposition}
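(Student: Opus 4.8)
The plan is to show that $\bm y_h$ lies in $\bm H(\textup{div};\Omega)$ and is divergence free by combining the two equations \eqref{HDG_discrete2_c} and \eqref{HDG_discrete2_d}; the argument for $\bm z_h$ is identical, using \eqref{HDG_discrete2_g} and \eqref{HDG_discrete2_h} in place of those two. The key point is that the facet space $Q_h$ for the pressure trace consists of polynomials of degree $k+1$ on each edge, which is exactly one degree higher than the scalar space $W_h$ used for $\nabla\cdot\bm y_h$, and this extra degree is what makes the normal component of $\bm y_h$ single-valued across interior edges and zero on the boundary.

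First I would observe that, since $\bm y_h|_K\in[\mathcal P^{k+1}(K)]^2$, the piecewise divergence $\nabla\cdot\bm y_h$ is a piecewise polynomial of degree at most $k$, hence $\nabla\cdot\bm y_h\in W_h$; moreover $(\nabla\cdot\bm y_h,1)_{\mathcal T_h}=\langle\bm y_h\cdot\bm n,1\rangle_{\partial\mathcal T_h}$, and choosing $\widehat w_1\equiv 1\in Q_h$ in \eqref{HDG_discrete2_d} shows this boundary term vanishes, so in fact $\nabla\cdot\bm y_h\in W_h^0$. Testing \eqref{HDG_discrete2_c} with $w_1=\nabla\cdot\bm y_h\in W_h^0$ then gives $\|\nabla\cdot\bm y_h\|^2_{\mathcal T_h}=0$, so $\nabla\cdot\bm y_h=0$ elementwise. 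Next I would use \eqref{HDG_discrete2_d} again, now with general $\widehat w_1\in Q_h$: since $\bm y_h\cdot\bm n|_{\partial K}$ is a polynomial of degree at most $k+1$ on each edge and $Q_h$ contains all such edge polynomials, the condition $\langle\bm y_h\cdot\bm n,\widehat w_1\rangle_{\partial\mathcal T_h}=0$ for all $\widehat w_1\in Q_h$ forces the jump $[\![\bm y_h\cdot\bm n]\!]$ to vanish on every interior edge and $\bm y_h\cdot\bm n$ itself to vanish on every boundary edge. A piecewise polynomial vector field whose normal trace is single-valued across interior facets and whose piecewise divergence is in $\bm L^2(\Omega)$ belongs to $\bm H(\textup{div};\Omega)$; combined with $\nabla\cdot\bm y_h=0$ on each element, this yields $\bm y_h\in\bm H(\textup{div};\Omega)$ with $\nabla\cdot\bm y_h=0$ globally.

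The step I expect to require the most care is the claim that \eqref{HDG_discrete2_d} tested against $Q_h$ genuinely controls the full normal trace, i.e. that the matching of the polynomial degree of $Q_h$ to that of $\bm y_h\cdot\bm n$ is the sharp ingredient: one must check that $\bm y_h\cdot\bm n|_e\in\mathcal P^{k+1}(e)$ for each edge $e$ (it is the trace of a degree-$(k+1)$ polynomial), so that choosing $\widehat w_1$ equal to the jump on each interior edge and to $\bm y_h\cdot\bm n$ on each boundary edge is admissible and produces $\sum_e\|[\![\bm y_h\cdot\bm n]\!]\|^2_e+\sum_{e\subset\Gamma}\|\bm y_h\cdot\bm n\|^2_e=0$. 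This is precisely where the present scheme differs from the one in \cite{GongStokes_Tangential1}, and it is worth stating the degree bookkeeping explicitly. Everything else is routine, and I would note at the end that the same reasoning with \eqref{HDG_discrete2_g}, \eqref{HDG_discrete2_h}, and $q_h\in W_h^0$, $\widehat w_2\in Q_h$ gives $\bm z_h\in\bm H(\textup{div};\Omega)$ and $\nabla\cdot\bm z_h=0$.
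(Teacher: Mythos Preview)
Your proof is correct and follows essentially the same approach as the paper: both arguments test \eqref{HDG_discrete2_c} with the piecewise divergence (after handling the mean-zero constraint on $W_h^0$) and test \eqref{HDG_discrete2_d} with the normal jump, exploiting that $Q_h$ has degree $k+1$ matching $\bm y_h\cdot\bm n|_e$. The only cosmetic difference is that the paper combines the two tests into a single identity by choosing $(w_1,\widehat w_1)=(\nabla\cdot\bm y_h-c_0,\widehat r-c_0)$ with $c_0$ the mean of $\nabla\cdot\bm y_h$ and $\widehat r$ the normal jump, whereas you treat the divergence and the jump in two separate steps; your extra observation that $\bm y_h\cdot\bm n=0$ on $\Gamma$ is true but not needed for the stated conclusion.
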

	
	\begin{proof}
		We only prove the result for $\bm y_h$ since the proof for $\bm z_h$ is similar.  Let $K_1$, $K_2\in \mathcal{T}_h$ be any two adjacent elements sharing a common edge  $e$. Define $\widehat{r}\in Q_h$ as follows:
		\begin{align*}
			\widehat{r}|_e &= -(\bm y_h\cdot \bm n_e)|_{K_1\cap e}-(\bm y_h\cdot \bm n_e)|_{K_2\cap e}\qquad \forall  e\in \mathcal{E}_h^o,\\
			\widehat{r}|_e&=0\qquad\qquad\qquad\qquad\qquad\qquad\qquad\qquad\ \  \forall e\in \mathcal{E}_h^\partial.
		\end{align*}
		Let $c_0=\frac{1}{|\Omega|}\sum\limits_{K\in\mathcal{T}_h}\int_{K} \nabla\cdot \bm y_h$ and  take $(w_1,\widehat{w}_1)=(\nabla\cdot\bm y_h-c_0,\widehat{r}-c_0)$ in {\eqref{HDG_discrete2_c}-\eqref{HDG_discrete2_d}} to get
		\begin{align*}
			0&= -(\nabla\cdot\bm y_h, \nabla\cdot\bm y_h-c_0 )_{\mathcal T_h}+\langle \bm y_h \cdot\bm n,\widehat{r}-c_0\rangle_{\partial\mathcal T_h }\\
			&=-(\nabla\cdot\bm y_h, \nabla\cdot\bm y_h )_{\mathcal T_h}+\langle \bm y_h \cdot\bm n,\widehat{r}\rangle_{\partial\mathcal T_h }\\
			&=-(\nabla\cdot\bm y_h, \nabla\cdot\bm y_h )_{\mathcal T_h}-\sum_{e\in \mathcal{E}_h^o}\| (\bm y_h\cdot\bm n_e)|_{K_1}+(\bm y_h\cdot\bm n_e)|_{K_2} \|_{0,e}^2.
		\end{align*}
		This  implies $\bm{y}_h\in\bm{H}(\textup{div};\Omega)$ and $\nabla\cdot\bm y_h=0$.
	\end{proof}

	\section{Error Analysis}
	\label{sec:analysis}
	We assume {that} the solution of  \eqref{TCOSE1}-\eqref{TCOOC}  satisfies
	\begin{align*}
		\mathbb L \in \mathbb H^{r_{\mathbb L}}(\Omega), \quad   \bm y \in \bm H^{r_{\bm y}}(\Omega), \quad  \mathbb G \in \mathbb H^{r_{\mathbb G}}(\Omega), \quad  \bm z \in \bm H^{r_{\bm z}}(\Omega),
	\end{align*}
	where
	\begin{align}\label{reg_assumption}
		r_{\bm y} > 1, \quad  \ r_{\bm z} > 2,  \quad   \ r_{\mathbb L}> 1/2,  \quad  r_{\mathbb G} > 1.
	\end{align}

	We now state our main result.
	\begin{theorem}\label{main_res}
		For
		\begin{gather*}
			s_{\mathbb{L}} =\min\{r_{\mathbb L}, k+1 \},  \;\; 	s_{\bm y} =\min\{r_{\bm y}, k+2 \},\;\;
			s_{\mathbb{G}} =\min\{r_{\mathbb G}, k+1 \},  \;\; 	s_{\bm z} =\min\{r_{\bm z}, k+2 \},
		\end{gather*}
		if the regularity assumption \eqref{reg_assumption} holds we have
		\begin{align*}
			\norm {u - u_h}_{\mathcal E_h^\partial}  &\lesssim h^{s_{\mathbb L}+\frac 1 2}\norm{\mathbb L}_{s_{\mathbb L},\Omega}  +  h^{s_{\bm y}-\frac 1 2}\norm{\bm y}_{s_{\bm y},\Omega} + h^{s_{\mathbb G}-\frac 1 2}\norm{\mathbb G}_{s_{\mathbb G},\Omega}{+h^{s_{\bm z}-\frac32}\norm{\bm z}_{s_{\bm z},\Omega}},\\
			\norm {\bm y - \bm y_h}_{\mathcal T_h}  &\lesssim    h^{s_{\mathbb L}+\frac 1 2}\norm{\mathbb L}_{s_{\mathbb L},\Omega}  +  h^{s_{\bm y}-\frac 1 2}\norm{\bm y}_{s_{\bm y},\Omega} + h^{s_{\mathbb G}-\frac 1 2}\norm{\mathbb G}_{s_{\mathbb G},\Omega}{+h^{s_{\bm z}-\frac32}\norm{\bm z}_{s_{\bm z},\Omega}},\\
			\norm {\mathbb G - \mathbb G_h}_{\mathcal T_h} &\lesssim  h^{s_{\mathbb L}+\frac 1 2}\norm{\mathbb L}_{s_{\mathbb L},\Omega}  +  h^{s_{\bm y}-\frac 1 2}\norm{\bm y}_{s_{\bm y},\Omega} + h^{s_{\mathbb G}-\frac 1 2}\norm{\mathbb G}_{s_{\mathbb G},\Omega}{+h^{s_{\bm z}-\frac32}\norm{\bm z}_{s_{\bm z},\Omega}},\\
			\norm {\bm z - \bm z_h}_{\mathcal T_h} &\lesssim   h^{s_{\mathbb L}+\frac 1 2}\norm{\mathbb L}_{s_{\mathbb L},\Omega}  +  h^{s_{\bm y}-\frac 1 2}\norm{\bm y}_{s_{\bm y},\Omega} + h^{s_{\mathbb G}-\frac 1 2}\norm{\mathbb G}_{s_{\mathbb G},\Omega}+  h^{s_{\bm z}-\frac32}\norm{\bm z}_{s_{\bm z},\Omega}.
		\end{align*}
		If $k \geq  1$, then
		\begin{align*}
			\norm {\mathbb L - \mathbb L_h}_{\mathcal T_h}   \lesssim h^{s_{\mathbb L}}\norm{\mathbb L}_{s_{\mathbb L},\Omega} + h^{s_{\bm y}-1}\norm{\bm y}_{s_{\bm y},\Omega} +h^{s_{\mathbb G}-1}\norm{\mathbb G}_{s_{\mathbb G},\Omega}{+h^{s_{\bm z}-2}\norm{\bm z}_{s_{\bm z},\Omega}}.
		\end{align*}
	\end{theorem}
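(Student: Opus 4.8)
The plan is to carry out the three-step program used in \cite{GongStokes_Tangential1} --- projection and consistency, a priori estimates for an auxiliary discrete problem, and a duality argument for the control error --- but with every test function chosen to be \emph{discretely divergence free with continuous normal component across interior edges}, which is exactly what removes the pressure $p$ and the dual pressure $q$ from each intermediate bound. First I would introduce the HDG projection $\Pi_h=(\Pi_{\mathbb L},\Pi_{\bm V})$ tailored to this method (with stabilization $h^{-1}$), together with the $L^2$ projections $\bm P_M$ onto $\bm M_h$, $P_W$ onto $W_h$, and $P_Q$ onto $Q_h$; the structural properties I need from $\Pi_h$ are the Raviart--Thomas/BDM-type relation $\nabla\cdot(\Pi_{\bm V}\bm v)=P_W(\nabla\cdot\bm v)$ and the reproduction of the normal edge moments of $\bm v$ against $Q_h$, so that $\Pi_{\bm V}$ maps a divergence-free field to a discretely divergence-free one and never carries the pressure into the projection error. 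I then define the auxiliary discrete state $(\mathbb L_h(u),\bm y_h(u),p_h(u),\widehat{\bm y}_h^o(u),\widehat p_h(u))$ by \eqref{HDG_discrete2_a}--\eqref{HDG_discrete2_d} and \eqref{HDG_discrete2_i} with the \emph{exact} control $u$ in place of $u_h$, and the auxiliary discrete adjoint $(\mathbb G_h(u),\bm z_h(u),q_h(u),\widehat{\bm z}_h^o(u),\widehat q_h(u))$ by \eqref{HDG_discrete2_e}--\eqref{HDG_discrete2_h} and \eqref{HDG_discrete2_j} with the exact state $\bm y$ in place of $\bm y_h$; by \eqref{prof_divergence_free} and its obvious variant all these discrete velocities are globally divergence free.

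For the auxiliary problems I would set $\bm\varepsilon^{\mathbb L}=\Pi_{\mathbb L}\mathbb L-\mathbb L_h(u)$, $\bm\varepsilon^{\bm y}=\Pi_{\bm V}\bm y-\bm y_h(u)$ and analogously for the adjoint, subtract the consistent exact identities \eqref{TCOSE1}--\eqref{TCOSE3} from the auxiliary HDG equations, and test the flux equation with $\bm\varepsilon^{\mathbb L}$ and the momentum equation with $\bm v_1=\bm\varepsilon^{\bm y}$. The decisive point is that $\bm\varepsilon^{\bm y}$ is discretely divergence free and normal-continuous across interior edges by \eqref{HDG_discrete2_c}--\eqref{HDG_discrete2_d} and the properties of $\Pi_{\bm V}$, so the pressure terms $(p-p_h(u),\nabla\cdot\bm v_1)_{\mathcal T_h}$ and $\langle p-\widehat p_h(u),\bm v_1\cdot\bm n\rangle_{\partial\mathcal T_h}$ vanish identically --- the discrete counterpart of the invariance ``$-\Delta\bm y+\nabla(p+\phi)=\bm f+\nabla\phi$'' recalled in the introduction. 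With the approximation properties of $\Pi_h$ this produces an energy bound for $\norm{\bm\varepsilon^{\mathbb L}}_{\mathcal T_h}$ and for the interior jump $h^{-1/2}\norm{\bm P_M\bm y_h(u)-\widehat{\bm y}_h^o(u)}_{\partial\mathcal T_h\backslash\mathcal E_h^\partial}$ with no pressure norm on the right, and a standard duality (Aubin--Nitsche) argument against an auxiliary dual Stokes problem --- whose pressure is again annihilated by divergence-freeness --- upgrades it to
\[
\norm{\bm y-\bm y_h(u)}_{\mathcal T_h}\lesssim h^{s_{\mathbb L}+\frac12}\norm{\mathbb L}_{s_{\mathbb L},\Omega}+h^{s_{\bm y}-\frac12}\norm{\bm y}_{s_{\bm y},\Omega}.
\]
Running the identical argument on the adjoint triple gives the corresponding pressure-free energy and $L^2$ estimates for $\mathbb G-\mathbb G_h(u)$ and $\bm z-\bm z_h(u)$, together with the boundary-edge estimates $\norm{\mathbb G-\mathbb G_h(u)}_{\mathcal E_h^\partial}\lesssim h^{s_{\mathbb G}-\frac12}\norm{\mathbb G}_{s_{\mathbb G},\Omega}$ and $\norm{\bm z-\bm z_h(u)}_{\mathcal E_h^\partial}\lesssim h^{s_{\bm z}-\frac12}\norm{\bm z}_{s_{\bm z},\Omega}$ that feed the control estimate.

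To estimate $u-u_h$ I would subtract the discrete optimality condition \eqref{HDG_discrete2_k} from the continuous one \eqref{TCOOC} and pair the state error equation (whose only source is $u-u_h$ on $\mathcal E_h^\partial$) with the adjoint error equation (whose only source is $\bm y-\bm y_h$ in the interior); the cross terms cancel, as in \cite{GongStokes_Tangential1}, by the duality between the state and adjoint HDG systems, leaving
\[
\gamma\norm{u-u_h}_{\mathcal E_h^\partial}^2+\norm{\bm y_h(u)-\bm y_h}_{\mathcal T_h}^2\lesssim \mathcal R\,\norm{u-u_h}_{\mathcal E_h^\partial},
\]
where $\mathcal R$ collects the auxiliary errors of the previous step; the $h^{-1}$-weighted boundary terms in \eqref{HDG_discrete2_k} are what turn the adjoint contributions into $h^{s_{\mathbb G}-\frac12}\norm{\mathbb G}_{s_{\mathbb G},\Omega}$ and $h^{s_{\bm z}-\frac32}\norm{\bm z}_{s_{\bm z},\Omega}$, so that $\mathcal R$ is precisely the asserted bound on $\norm{u-u_h}_{\mathcal E_h^\partial}$ and Young's inequality finishes that estimate. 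The bounds for $\bm y-\bm y_h$, $\mathbb G-\mathbb G_h$ and $\bm z-\bm z_h$ then follow from the triangle inequality and a discrete stability estimate $\norm{\bm y_h(u)-\bm y_h}_{\mathcal T_h}+\norm{\mathbb G_h(u)-\mathbb G_h}_{\mathcal T_h}+\norm{\bm z_h(u)-\bm z_h}_{\mathcal T_h}\lesssim\norm{u-u_h}_{\mathcal E_h^\partial}$ --- proved again by the pressure-free energy argument --- once one notes that all the natural auxiliary rates are no worse than the terms already present in $\mathcal R$. Finally, for $k\ge 1$, testing the $\bm y$-flux error equation with a suitable $\mathbb T_1\in\mathbb K_h$ and using an inverse inequality recovers $\norm{\mathbb L-\mathbb L_h}_{\mathcal T_h}$ at the cost of one half power of $h$, which yields the shifted exponents of the last displayed estimate in the theorem.

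The main obstacle --- and the place where the argument departs from \cite{GongStokes_Tangential1} --- is making the auxiliary estimates genuinely pressure-free: in that reference the analogous energy identity retains a term controllable only by $\norm{p}$, whereas here I must check that the new pressure trace $\widehat p_h\in Q_h$ (degree $k+1$, matching $\bm y_h\cdot\bm n$) together with the constraints \eqref{HDG_discrete2_d} and \eqref{HDG_discrete2_i} annihilates that term \emph{exactly} on each interior edge, while the commuting property of $\Pi_{\bm V}$ still preserves the full approximation order of $\bm y$ and the dual problem used for the $L^2$ bound inherits the same cancellation. Threading these requirements simultaneously is what lets the velocity trace stay at degree $k$, and it is the part I expect to require the most care.
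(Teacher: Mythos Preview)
Your proposal is correct and follows the same eight-step architecture as the paper (auxiliary problems with the exact control, energy estimate, Aubin--Nitsche duality, control-error identity via cross-cancellation, triangle inequality, inverse estimate for $\mathbb L$). The one substantive difference is the \emph{mechanism} you invoke to eliminate the pressure. You argue that the test function $\bm v_1=\bm\varepsilon^{\bm y}$ is elementwise divergence free and normal-continuous, so $(p-p_h(u),\nabla\cdot\bm v_1)_{\mathcal T_h}$ and $\langle p-\widehat p_h(u),\bm v_1\cdot\bm n\rangle_{\partial\mathcal T_h}$ vanish for that particular test. The paper instead removes the pressure \emph{at the level of the error equation}, valid for \emph{all} $\bm v_1\in\bm V_h$: because $\nabla\cdot\bm v_1\in W_h$ and $\bm v_1\cdot\bm n|_e\in Q_h$, one has $(\Pi_W p,\nabla\cdot\bm v_1)=(p,\nabla\cdot\bm v_1)$ and $\langle P_Q p,\bm v_1\cdot\bm n\rangle=\langle p,\bm v_1\cdot\bm n\rangle$, so after integration by parts the projected pressure exactly reproduces $(\nabla p,\bm v_1)$ and is absorbed into $(\bm f,\bm v_1)$. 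This is why the paper uses the Raviart--Thomas interpolant $\Pi^{\textup{RT}}$ (for the commuting property) together with plain $L^2$ projections, rather than an HDG projection. Your route works as well and is conceptually clean, but the paper's version is slightly stronger (pressure-free error equation for arbitrary tests) and makes the duality step transparent without having to re-verify divergence-freeness of the dual test. Two minor points: the paper defines the auxiliary adjoint with right-hand side $\bm y_h(u)-\bm y_d$, not $\bm y-\bm y_d$ as you propose (either works); and the paper's duality step actually gives the sharper auxiliary rate $\|\bm y-\bm y_h(u)\|_{\mathcal T_h}\lesssim h^{s_{\mathbb L}+1}\|\mathbb L\|+h^{s_{\bm y}}\|\bm y\|$, a full power better than what you wrote, though this is immaterial for the final statement since the control error dominates.
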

	\begin{remark}
		The error estimates in Theorem \ref{main_res} are independent of the pressures $p$ and $q$, which are different {from} the error estimates in \cite[Theorem 4.1]{GongStokes_Tangential1}. Therefore, our HDG method is pressure-robust. We note that the HDG method considered  here has more degrees of freedom than {that} in \cite{GongStokes_Tangential1}, since we have introduced numerical traces for the  pressures.  We also note that the technique used in \cite{GongStokes_Tangential1} cannot be applied here to treat the case when $r_{\mathbb L}\le 1/2$. This low regularity for $\mathbb L = \nabla \bm y$ may appear when $\xi \leq 3/2$, which corresponds to a value of $\omega$ greater than $\omega_{3/2}\approx 0.839138753489667\pi$; see more details in Remark \ref{whyhighregualirty}.
		Moreover, the meshes here are restricted to be triangular, while in \cite{GongStokes_Tangential1} we can use general polygonal meshes.
	\end{remark}

	\textcolor{black}{Noticing that for $ \omega \in [\pi/3, \omega_{3/2}) $ we have that $\xi\in (3/2,4]$, the application of } Theorems \ref{main_res} and {\ref{regularity_res}} gives the following result.
	\begin{corollary}\label{cor:main_result}
		Suppose  $ \bm y_d \in \bm H^{\xi}(\Omega) $.  Let $ \omega \in [\pi/3, \omega_{3/2}) $ be the largest interior angle of $\Gamma$, and define $ r_{\Omega}$ by%, $ t_\Omega $ by
		\[
		r_{\Omega} = \min\left\{ \frac{3}{2}, \xi- \frac{1}{2} \right\} \in \textcolor{black}{(1, \frac{3}{2}]}.%,  \quad  t_{\Omega} = \min\left\{ t^* + 2, 1 + \frac{\pi}{\omega} \right\} \in (5/2, 3).
		\]
		Then the regularity condition \eqref{reg_assumption} is satisfied.  Also, if $ k \ge 1 $, then for any $ r < r_\Omega $ we have
		\begin{gather*}
			h^{\frac 1 2}\|\mathbb L - \mathbb L_h\|_{\mathcal T_h}  + \|\bm y  - \bm y_h\|_{\mathcal T_h} + 	\|\mathbb G - \mathbb G_h\|_{\mathcal T_h} + \|\bm z - \bm z_h\|_{\mathcal T_h} + \norm{u-u_h}_{\mathcal E_h^\partial}\lesssim h^{r}.
		\end{gather*}
		Moreover, if $k=0$, we have
		\begin{align*}
			\norm{u-u_h}_{\mathcal E_h^\partial}  + \|\bm y  - \bm y_h\|_{\mathcal T_h} + \|\bm z  - \bm z_h\|_{\mathcal T_h} + \|\mathbb G  - \mathbb G_h\|_{\mathcal T_h}  \lesssim h^{1/2}.
		\end{align*}
	\end{corollary}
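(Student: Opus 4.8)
To prove Corollary~\ref{cor:main_result} I would combine Theorems~\ref{regularity_res} and~\ref{main_res}; no new estimate is required, only a careful bookkeeping of the various minima. First I would check the hypotheses of Theorem~\ref{regularity_res}: every interior angle is $<\pi$, so $\Omega$ is convex; $\bm f\in\bm L^2(\Omega)$ is a standing assumption; and since $\min\{2,\xi\}\le\xi$, $\bm y_d\in\bm H^{\xi}(\Omega)\hookrightarrow\bm H^{\min\{2,\xi\}}(\Omega)$. As noted just before the corollary, $\omega\in[\pi/3,\omega_{3/2})$ gives $\xi\in(3/2,4]$; in particular $\xi-\tfrac12>1$, so $r_\Omega=\min\{\tfrac32,\xi-\tfrac12\}\in(1,\tfrac32]$, which is the first assertion about $r_\Omega$.

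Next I would fix an arbitrary $r<r_\Omega$, pick $s$ with $\max\{1,r\}<s<r_\Omega$, and pick $\rho$ with $1<\rho<\min\{3,\xi\}$ (here $\rho$ renames the parameter called $r$ in Theorem~\ref{regularity_res}, to avoid a clash with the rate $r$). Theorem~\ref{regularity_res} then yields $\bm y\in\bm H^{s+1/2}(\Omega)$, $\mathbb L\in\mathbb H^{s-1/2}(\Omega)$, $\bm z\in\bm H^{\rho+1}(\Omega)$, $\mathbb G\in\mathbb H^{\rho}(\Omega)$, so that \eqref{reg_assumption} holds with $r_{\bm y}=s+\tfrac12>1$, $r_{\mathbb L}=s-\tfrac12>\tfrac12$, $r_{\bm z}=\rho+1>2$, $r_{\mathbb G}=\rho>1$; this proves the first claim of the corollary and makes Theorem~\ref{main_res} applicable. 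Note it is precisely $\xi>3/2$, i.e.\ $r_\Omega>1$, that allows $s>1$ and hence $r_{\mathbb L}>1/2$, which is where the restriction $\omega<\omega_{3/2}$ matters (cf.\ the remark after Theorem~\ref{main_res}).

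It then remains to substitute these exponents into Theorem~\ref{main_res}. For every $k\ge0$, $s-\tfrac12<r_\Omega-\tfrac12\le1\le k+1$ and $s+\tfrac12<r_\Omega+\tfrac12\le2\le k+2$, so $s_{\mathbb L}=s-\tfrac12$ and $s_{\bm y}=s+\tfrac12$; hence the $\mathbb L$- and $\bm y$-terms in the four estimates for $\|u-u_h\|_{\mathcal E_h^\partial}$, $\|\bm y-\bm y_h\|_{\mathcal T_h}$, $\|\mathbb G-\mathbb G_h\|_{\mathcal T_h}$, $\|\bm z-\bm z_h\|_{\mathcal T_h}$ are both $h^{s}$, and $h^s\le h^r$ because $s>r$. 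For $k\ge1$, letting $\rho\uparrow\min\{3,\xi\}$ and using $\min\{\min\{3,\xi\},k+1\}-\tfrac12\ge\min\{\tfrac32,\xi-\tfrac12\}=r_\Omega$ together with $\min\{\min\{3,\xi\}+1,k+2\}-\tfrac32\ge\min\{\tfrac32,\xi-\tfrac12\}=r_\Omega$ shows that $s_{\mathbb G}-\tfrac12$ and $s_{\bm z}-\tfrac32$ can be made $\ge r$, so all four estimates are $\lesssim h^r$. The estimate for $\|\mathbb L-\mathbb L_h\|_{\mathcal T_h}$ in Theorem~\ref{main_res} has every exponent of $h$ exactly $\tfrac12$ smaller than the corresponding one above, hence it is $\lesssim h^{r-1/2}$, i.e.\ $h^{1/2}\|\mathbb L-\mathbb L_h\|_{\mathcal T_h}\lesssim h^r$; summing gives the case $k\ge1$. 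For $k=0$ the only change is that $s_{\mathbb G}=\min\{r_{\mathbb G},1\}=1$ and $s_{\bm z}=\min\{r_{\bm z},2\}=2$, so the $\mathbb G$- and $\bm z$-terms now contribute $h^{1/2}$, which (since $s>1>\tfrac12$) dominates the $h^s$ of the remaining two terms, giving $\lesssim h^{1/2}$ for $\|u-u_h\|$, $\|\bm y-\bm y_h\|$, $\|\bm z-\bm z_h\|$, $\|\mathbb G-\mathbb G_h\|$; the $\|\mathbb L-\mathbb L_h\|$ bound of Theorem~\ref{main_res} is unavailable for $k=0$, matching the statement.

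I expect the only step requiring genuine care — as opposed to mechanical substitution — to be the \emph{simultaneous} verification that the four exponents $s_{\mathbb L}+\tfrac12$, $s_{\bm y}-\tfrac12$, $s_{\mathbb G}-\tfrac12$, $s_{\bm z}-\tfrac32$ can all be pushed up to $r_\Omega-\varepsilon$ at once; this rests on the arithmetic coincidences $\min\{2,\xi\}-\tfrac12=\min\{3,\xi+1\}-\tfrac32=\min\{\tfrac32,\xi-\tfrac12\}=r_\Omega$, which ensure that for $k\ge1$ the highly regular quantities $\bm z$ and $\mathbb G$ never drag the rate below $r_\Omega$, uniformly over the admissible interior angles $\omega$. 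All remaining contributions are then handled simply by discarding $h$-powers larger than $r$ (respectively $1/2$).
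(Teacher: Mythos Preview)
Your proposal is correct and follows exactly the route the paper indicates: the paper gives no detailed proof of this corollary, merely noting that for $\omega\in[\pi/3,\omega_{3/2})$ one has $\xi\in(3/2,4]$ and that the result then follows directly from Theorems~\ref{regularity_res} and~\ref{main_res}. Your argument supplies the bookkeeping the paper omits---in particular the verification that \eqref{reg_assumption} holds and that, for $k\ge1$, all four exponents $s_{\mathbb L}+\tfrac12$, $s_{\bm y}-\tfrac12$, $s_{\mathbb G}-\tfrac12$, $s_{\bm z}-\tfrac32$ can be pushed to $r_\Omega$, while for $k=0$ the last two saturate at $\tfrac12$---and this is precisely what is needed.
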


	\subsection{Preliminary material}
	\label{sec:Projectionoperator}
		We use  the standard $L^2$ {projections} $\bm \Pi _{\mathbb K} : \mathbb L^2(\Omega)\to \mathbb {K}_h$, $\bm \Pi_V :  \bm L^2(\Omega) \to \bm{V} _h$, and $\Pi_W :  L^2(\Omega) \to W_h$ satisfying
	\begin{subequations}\label{projection_operator}
		\begin{align}
			(\bm{\Pi}_{\mathbb K}\mathbb L,\mathbb T)_K&=(\mathbb L,\mathbb T)_K  \qquad \forall \ \mathbb T \in[\mathcal P^{k}(K)]^{2\times 2},\label{projection_operator_a}\\
			(\bm\Pi_V\bm y,\bm v)_K&=(\bm y,\bm v)_K \qquad \forall  \ \bm v \in[\mathcal P^{k+1}(K)]^2,\label{projection_operator_b}\\
			(\Pi_Wp, w)_K&=(p, w)_K \qquad \forall  \ w\in \mathcal P^{k}(K).\label{projection_operator_c}
		\end{align}
	\end{subequations}
	For all edges $e$ of the triangle $K$, we also need the $L^2$-orthogonal projections {$ P_M$ onto $ M_h$, $ P_Q$ onto $ Q_h$,}  and $\bm P_M$ onto $\bm M_h$ satisfying
	\begin{subequations}
		\begin{align}
			{ \langle  P_M  u- u,  \mu\rangle_e} &= 0 \quad \forall \mu\in  M_h,\\
			{\langle  P_Q  p- p,  \mu\rangle_e} &= 0 \quad \forall \mu\in  Q_h,\\
			\langle \bm P_M \bm y-\bm y, \bm \mu\rangle_e &= 0 \quad \forall \bm\mu\in \bm M_h.\label{def_P_M}
		\end{align}
	\end{subequations}
	In the analysis, we use the following classical results \cite[Section 4.2]{HuShenSinglerZhangZheng_HDG_Dirichlet_control1}:
	\begin{subequations}
		\begin{align}
			\|\bm\Pi_{\mathbb K}\mathbb L-\mathbb L\|_{\mathcal T_h}&\lesssim h^{s_{\mathbb L}} \|\mathbb L\|_{{s_{\mathbb L}},\Omega}, \quad 		\|\bm\Pi_{\bm V}\bm y -\bm y\|_{\mathcal T_h} \lesssim h^{s_{\bm y}} \|\bm y\|_{{s_{\bm y}},\Omega},\\
			\|\bm\Pi_{\mathbb K}\mathbb L-\mathbb L\|_{\partial\mathcal T_h}&\lesssim h^{s_{\mathbb L}-\frac 1 2 } \|\mathbb L\|_{{s_{\mathbb L}},\Omega}, \quad 		\|\bm\Pi_{\bm V}\bm y -\bm y\|_{\partial \mathcal T_h} \lesssim h^{s_{\bm y}-\frac 1 2} \|\bm y\|_{{s_{\bm y}},\Omega},\\
			\|\Pi_W p -p\|_{\mathcal T_h}&\lesssim h^{s_{p}} \|p\|_{{s_p},\Omega}, \quad 		\|\bm P_M \bm y -\bm y\|_{\partial \mathcal T_h} \lesssim h^{s_{\bm y}-\frac 1 2} \|\bm y\|_{{s_{\bm y}},\Omega},\\
			\| P_M  u - u\|_{\partial \mathcal T_h} &\lesssim h^{s_{\bm y}-\frac 1 2} \|\bm  y\|_{{s_{\bm y}},\Omega},\quad {\| P_Q  p - p\|_{\partial \mathcal T_h} \lesssim h^{s_{p}-\frac 1 2} \|p\|_{{s_{p}},\Omega}}.
		\end{align}
	\end{subequations}
	We have the same projection error bounds for $\mathbb G$, $\bm z$ and $q$.

	For the error analysis in this section, we need to introduce the {classical Raviart-Thomas (RT)} space:
	\begin{align*}
		\mathcal R^{k}(K)=[\mathcal{P}^k(K) ]^2+\bm x \mathcal{P}^k (K),
	\end{align*}
	and define the RT projection {$\Pi^{\textup{RT}}:\bm H^1(K)\rightarrow {\mathcal R^{k+1}(K)} $}
	\begin{subequations}\label{RT}
		\begin{align}
			\langle\Pi^{\textup{RT}} \bm v\cdot\bm n, w \rangle_{e}&=\langle \bm v\cdot\bm n,w \rangle_e \qquad \forall w \in {\mathcal{P}^{k+1}(e)},e\subset \partial K,\label{RT_1}\\
			(\Pi^{\textup{RT}} \bm v,\bm w)_{K}&=(\bm v,\bm w)_K \quad\quad\quad \forall \bm w\in {[\mathcal{P}^{k}(K)]^2}.\label{RT_2}
		\end{align}
	\end{subequations}
	
	We also need the  following classical results \cite[Theorem 3.1]{Duran2008}:
	\begin{align*}
		\|\Pi^{\textup{RT}} \bm y - \bm y\|_{\mathcal T_h} \lesssim h^{s_{\bm y}}\|\bm y\|_{{s_{\bm y}},\Omega},\qquad
		\|\Pi^{\textup{RT}} \bm y - \bm y\|_{\partial \mathcal T_h} \lesssim h^{s_{\bm y}-1/2}\|\bm y\|_{{s_{\bm y}},\Omega}.
	\end{align*}
	
	By the well-known commutative diagram  \cite[Equation (38)]{Duran2008}  we have
	\begin{align*}
		{\nabla \cdot (\Pi^{\textup{RT}} \bm v) =  \Pi (\nabla \cdot \bm v)},
	\end{align*}
	where $\Pi$ is the standard $L^2$ projection from $L^2(K)$ onto $\mathcal P^{k+1}(K)$.  If $\bm v\in \bm H(\textup{div};\Omega)$ and $\nabla\cdot\bm v=0$, then
	$$
	\nabla \cdot (\Pi^{\textup{RT}} \bm v) = 0.
	$$
	{Applying} \cite[Lemma 3.1]{Duran2008} we have the following lemma.
	\begin{lemma}\label{Div_free}
		For any $\bm v\in \bm H(\textup{div};\Omega)$ and $\nabla\cdot\bm v=0$, we have $\Pi^{\textup{RT}}\bm v\in \bm V_h$.
	\end{lemma}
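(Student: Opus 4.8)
The plan is to reduce the statement to an elementwise claim and then combine the commuting–diagram identity recorded just above Lemma~\ref{Div_free} with a purely algebraic property of the Raviart--Thomas space. Since $\bm V_h$ imposes no interelement continuity — it only requires $\bm v|_K\in[\mathcal P^{k+1}(K)]^2$ for each $K\in\mathcal T_h$ — it suffices to show that $\Pi^{\textup{RT}}\bm v|_K\in[\mathcal P^{k+1}(K)]^2$ on every element $K$.

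First I would invoke the commutative diagram $\nabla\cdot(\Pi^{\textup{RT}}\bm v)=\Pi(\nabla\cdot\bm v)$ stated above. Because $\bm v\in\bm H(\textup{div};\Omega)$ with $\nabla\cdot\bm v=0$, this gives $\nabla\cdot(\Pi^{\textup{RT}}\bm v)=\Pi(0)=0$ on each $K$; that is, $\Pi^{\textup{RT}}\bm v$ is, elementwise, a divergence-free member of $\mathcal R^{k+1}(K)$. (If one worries that $\bm H(\textup{div};\Omega)$ is not regular enough for the face degrees of freedom in \eqref{RT_1}, this is handled by the usual density/extension of the RT projector; in our application $\bm v$ will always be the exact velocity or adjoint velocity, which is smooth enough.)

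Next I would upgrade ``divergence free'' to ``full vector polynomial''. Writing $\bm w\in\mathcal R^{k+1}(K)=[\mathcal P^{k+1}(K)]^2+\bm x\,\mathcal P^{k+1}(K)$ and absorbing the terms of the scalar factor of degree $\le k$ into the vector part, we may assume $\bm w=\bm q+\bm x\,\widetilde r$ with $\bm q\in[\mathcal P^{k+1}(K)]^2$ and $\widetilde r$ homogeneous of degree $k+1$. In two dimensions, Euler's identity $\bm x\cdot\nabla\widetilde r=(k+1)\widetilde r$ gives $\nabla\cdot\bm w=\nabla\cdot\bm q+(k+3)\widetilde r$, where $\nabla\cdot\bm q\in\mathcal P^{k}(K)$. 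Comparing the homogeneous components of degree $k+1$ on both sides of $\nabla\cdot\bm w=0$ forces $(k+3)\widetilde r=0$, hence $\widetilde r=0$ and $\bm w=\bm q\in[\mathcal P^{k+1}(K)]^2$. This is precisely the content of \cite[Lemma~3.1]{Duran2008}, which may be cited directly in place of this short computation. Applying it to $\bm w=\Pi^{\textup{RT}}\bm v|_K$ and ranging over $K\in\mathcal T_h$ yields $\Pi^{\textup{RT}}\bm v\in\bm V_h$.

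I do not expect a genuine obstacle: both ingredients are classical and already referenced in the excerpt. The only point requiring a little care is the bookkeeping in the decomposition of $\mathcal R^{k+1}(K)$ — making sure $\widetilde r$ is genuinely the top-degree homogeneous piece so that the degree comparison in $\nabla\cdot\bm w=0$ is legitimate — but this is routine, and one may simply quote \cite[Lemma~3.1]{Duran2008}.
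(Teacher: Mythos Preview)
Your proposal is correct and follows essentially the same route as the paper: the paper's argument, given in the paragraph immediately preceding the lemma, is precisely to use the commuting diagram $\nabla\cdot(\Pi^{\textup{RT}}\bm v)=\Pi(\nabla\cdot\bm v)$ to obtain divergence-freeness and then invoke \cite[Lemma~3.1]{Duran2008}. Your explicit Euler-identity computation simply spells out the content of that cited lemma, so the two arguments coincide.
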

	
	To simplify notation, we define an HDG operator $ \mathscr B$. For all $(\mathbb L_h,\bm y_h,p_h,\widehat{p}_h, \widehat{\bm y}_h^o) \in \mathbb K_h\times\bm{V}_h\times W_h^0\times Q_h\times  \bm M_h(o)$, we define
	\begin{align}\label{def_B1}
%		\begin{split}
			\hspace{1em}&\hspace{-1em}  \mathscr B(\mathbb L_h,\bm y_h,p_h,\widehat{p}_h,\widehat{\bm y}_h^o; \mathbb T_1,\bm v_1, w_1,\widehat{w}_1,\bm \mu_1)\nonumber\\
			&=(\mathbb L_h,\mathbb T_1)_{\mathcal{T}_h}+(\bm y_h,\nabla\cdot \mathbb T_1)_{\mathcal{T}_h}-\left\langle \widehat{\bm y}_h^o, \mathbb T_1 \bm{n}\right\rangle_{\partial\mathcal{T}_h\backslash \mathcal E_h^\partial}-(\nabla\cdot \mathbb{L}_h,\bm v_1)_{\mathcal{T}_h}\nonumber\\
			& \quad -( p_h,\nabla\cdot \bm v_1)_{\mathcal{T}_h}+\langle \widehat{p}_h,\bm v_1\cdot\bm n \rangle_{\partial\mathcal{T}_h} +\langle h^{-1} P_{\bm M} \bm y_h, \bm v_1\rangle_{\partial {\mathcal{T}_h}}-\langle h^{-1}\widehat{\bm y}_h^o,\bm v_1 \rangle_{\partial \mathcal{T}_h\backslash\mathcal E_h^\partial}\nonumber\\
			&\quad +(\nabla\cdot\bm y_h, w_1)_{\mathcal T_h}-\langle \bm y_h\cdot\bm n,\widehat{w}_1\rangle_{\partial\mathcal T_h}+\langle {\mathbb L}_h \bm n - h^{-1} (\bm P_M\bm y_h - \widehat{\bm y}_h^o),\bm\mu_1\rangle_{\partial\mathcal T_h\backslash\mathcal E_h^\partial}
%		\end{split}	
	\end{align}
	for all $(\mathbb{T}_1,\bm v_1,w_1,\widehat{w}_1, \bm\mu_1)\in \mathbb K_h\times\bm{V}_h\times W_h^0\times Q_h\times \bm M_h(o)$.

	By the definition of $\mathscr B$,  we can rewrite the HDG formulation \eqref{HDG_discrete2} as follows: find $(\mathbb L_h,\bm y_h,p_h,\widehat{p}_h,\widehat{\bm y}_h^o;\mathbb G_h,\bm z_h,q_h,\widehat{q}_h,\widehat{\bm z}_h^o) \in [\mathbb K_h\times\bm{V}_h\times W_h^0\times Q_h\times  \bm M_h(o)]^2$ and $u_h\in M_h$  such that
	\begin{subequations}\label{HDG_full_discrete}
		\begin{align}
			\mathscr  B(\mathbb L_h,\bm y_h,p_h,\widehat{p}_h,\widehat{\bm y}_h^o; \mathbb T_1,\bm v_1, w_1,\widehat{w}_1,\bm \mu_1)&=\langle  u_h\bm{\tau}, \mathbb T_1 \bm{n} +h^{-1}\bm v_1\rangle_{\mathcal E_h^\partial}+(\bm f,\bm v_1)_{\mathcal T_h},\label{HDG_full_discrete_a}\\
			\mathscr   B(\mathbb G_h,\bm z_h,-q_h,-\widehat{q}_h,\widehat{\bm z}_h^o; \mathbb T_2,\bm v_2, w_2,\widehat{w}_2,\bm \mu_2)&=(\bm y_h - \bm y_d,\bm v_2)_{\mathcal T_h},\label{HDG_full_discrete_b}\\
			{\langle \mathbb G_h\bm n-h^{-1} \bm P_{M}\bm z_h,  \mu_3\bm \tau \rangle_{\mathcal E_h^\partial} }&=\gamma \langle   u_h,  \mu_3 \rangle_{\mathcal E_h^\partial}\label{HDG_full_discrete_e}
		\end{align}
	\end{subequations}
	for all $(\mathbb T_1,\bm v_1,w_1,\widehat{w}_1,\bm \mu_1;\mathbb T_2,\bm v_2,w_2,\widehat{w}_2,\bm \mu_2) \in [\mathbb K_h\times\bm{V}_h\times W_h^0\times Q_h\times  \bm M_h(o)]^2$ and $\mu_3\in M_h$.
	
	\begin{lemma}\label{property_B}
		For any $(\mathbb{L}_h,\bm y_h,p_h,\widehat{p}_h,\widehat{\bm y}_h^o)\in \mathbb{K}_h\times \bm V_h\times W_h  \times Q_h \times \bm M_h(o)$,
			\begin{align}\label{pro_B1}
			\begin{split}
				\hspace{1em}&\hspace{-1em}  \mathscr B(\mathbb L_h,\bm y_h,p_h,\widehat{p}_h,\widehat{\bm y}_h^o;\mathbb L_h,\bm y_h,p_h,\widehat{p}_h,\widehat{\bm y}_h^o)\\
				&=\|\mathbb L_h\|^2_{\mathcal T_h}+h^{-1}\|\bm P_{M} \bm y_h-\widehat{\bm y}_h^o \|^2_{\partial\mathcal T_h\backslash\mathcal E_h^\partial} +h^{-1}\|\bm P_{M} \bm y_h\|^2_{\mathcal E_h^\partial}.
			\end{split}
		\end{align}
	\end{lemma}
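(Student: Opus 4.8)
The plan is to evaluate the bilinear form $\mathscr B$ at the diagonal argument $(\mathbb L_h,\bm y_h,p_h,\widehat p_h,\widehat{\bm y}_h^o)$ tested against itself, and to watch nearly every term cancel. Looking at the definition \eqref{def_B1}, I would group the nine terms into three families. First, the two genuinely ``stiffness'' terms: $(\mathbb L_h,\mathbb L_h)_{\mathcal T_h}=\|\mathbb L_h\|_{\mathcal T_h}^2$ survives, and we must show everything else either cancels or produces the two boundary penalty norms. Second, the pressure block: $-(p_h,\nabla\cdot\bm y_h)_{\mathcal T_h}+\langle\widehat p_h,\bm y_h\cdot\bm n\rangle_{\partial\mathcal T_h}+(\nabla\cdot\bm y_h,p_h)_{\mathcal T_h}-\langle\bm y_h\cdot\bm n,\widehat p_h\rangle_{\partial\mathcal T_h}$ — here the two volume terms are negatives of each other and the two facet terms are negatives of each other, so the whole pressure contribution vanishes identically. (Note this uses that $\widehat w_1=\widehat p_h\in Q_h$ is a legal test function, which is why the formulation pairs $p_h$ with the $Q_h$-trace.) Third, the velocity-flux coupling: the terms $(\bm y_h,\nabla\cdot\mathbb L_h)_{\mathcal T_h}-(\nabla\cdot\mathbb L_h,\bm y_h)_{\mathcal T_h}$ cancel outright, leaving the remaining facet terms to be reorganized.

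Next I would collect the leftover boundary terms, which are
\[
-\langle\widehat{\bm y}_h^o,\mathbb L_h\bm n\rangle_{\partial\mathcal T_h\setminus\mathcal E_h^\partial}
+\langle h^{-1}\bm P_M\bm y_h,\bm y_h\rangle_{\partial\mathcal T_h}
-\langle h^{-1}\widehat{\bm y}_h^o,\bm y_h\rangle_{\partial\mathcal T_h\setminus\mathcal E_h^\partial}
+\langle\mathbb L_h\bm n-h^{-1}(\bm P_M\bm y_h-\widehat{\bm y}_h^o),\widehat{\bm y}_h^o\rangle_{\partial\mathcal T_h\setminus\mathcal E_h^\partial}.
\]
The two occurrences of $\langle\mathbb L_h\bm n,\widehat{\bm y}_h^o\rangle_{\partial\mathcal T_h\setminus\mathcal E_h^\partial}$ (one from the first flux term, one hidden in the last term) cancel. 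For the penalty terms, I would use that $\bm P_M$ is the $L^2$-projection onto $\bm M_h$ and that $\widehat{\bm y}_h^o\in\bm M_h(o)$: thus $\langle\bm P_M\bm y_h,\bm v\rangle_e=\langle\bm y_h,\bm v\rangle_e$ for $\bm v\in\bm M_h$, and in particular $\langle\bm P_M\bm y_h,\bm y_h\rangle_e=\langle\bm P_M\bm y_h,\bm P_M\bm y_h\rangle_e=\|\bm P_M\bm y_h\|_{0,e}^2$ and $\langle\widehat{\bm y}_h^o,\bm y_h\rangle_e=\langle\widehat{\bm y}_h^o,\bm P_M\bm y_h\rangle_e$. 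Splitting $\partial\mathcal T_h=(\partial\mathcal T_h\setminus\mathcal E_h^\partial)\cup\mathcal E_h^\partial$, the terms on $\mathcal E_h^\partial$ give exactly $h^{-1}\|\bm P_M\bm y_h\|_{\mathcal E_h^\partial}^2$, and the terms on the interior skeleton assemble into $h^{-1}(\|\bm P_M\bm y_h\|^2-2\langle\bm P_M\bm y_h,\widehat{\bm y}_h^o\rangle+\|\widehat{\bm y}_h^o\|^2)=h^{-1}\|\bm P_M\bm y_h-\widehat{\bm y}_h^o\|^2$ on $\partial\mathcal T_h\setminus\mathcal E_h^\partial$, which is the claimed identity \eqref{pro_B1}.

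I do not expect a serious obstacle here; the only point requiring care is bookkeeping the sign conventions and the edge-set restrictions ($\partial\mathcal T_h$ vs.\ $\partial\mathcal T_h\setminus\mathcal E_h^\partial$ vs.\ $\mathcal E_h^\partial$) so that the completion-of-the-square step picks up exactly the right boundary contributions and nothing from the flux term spills over onto $\mathcal E_h^\partial$ — indeed $\mathbb L_h\bm n$ is tested against $\widehat{\bm y}_h^o$ only on the interior skeleton, which is what makes the algebra close. The pressure cancellation is immediate and is the whole reason the identity is ``pressure-robust'' at this structural level: $\widehat p_h$ and $p_h$ never appear in the right-hand side of \eqref{pro_B1}.
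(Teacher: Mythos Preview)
Your proposal is correct and follows essentially the same route as the paper: substitute the diagonal arguments into the definition of $\mathscr B$, observe that the volume flux-velocity terms and the pressure block cancel pairwise, and then use the $L^2$-projection identities $\langle\bm P_M\bm y_h,\bm y_h\rangle=\|\bm P_M\bm y_h\|^2$ and $\langle\widehat{\bm y}_h^o,\bm y_h\rangle=\langle\widehat{\bm y}_h^o,\bm P_M\bm y_h\rangle$ to complete the square on the penalty terms. The paper's write-up is slightly terser (it groups the penalty as $\langle h^{-1}(\bm P_M\bm y_h-\widehat{\bm y}_h^o),\bm y_h\rangle$ from the outset and then jumps directly to the final identity), but your more explicit bookkeeping of the edge-set splits and projection steps is exactly the content behind that jump.
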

	\begin{proof}
		According to the definition of $\mathscr B$ in \eqref{def_B1} and integration by parts, we get
		\begin{align*}
			\hspace{1em}&\hspace{-1em} \mathscr B(\mathbb L_h,\bm y_h,p_h,\widehat{p}_h,\widehat{\bm y}_h^o;\mathbb L_h,\bm y_h,p_h,\widehat{p}_h,\widehat{\bm y}_h^o)\\
			&=(\mathbb L_h,\mathbb L_h)_{\mathcal{T}_h}+(\bm y_h,\nabla\cdot\mathbb L_h)_{\mathcal{T}_h}-\left\langle \widehat{\bm y}_h^o, \mathbb L_h \bm{n}\right\rangle_{\partial\mathcal{T}_h\backslash \mathcal E_h^\partial}-(\nabla\cdot\mathbb L_h,\bm y_h)_{\mathcal T_h}\\
			&\quad -( p_h,\nabla\cdot \bm y_h)_{\mathcal{T}_h} +\langle \widehat{p}_h,\bm y_h\cdot\bm n \rangle_{\partial\mathcal{T}_h}+\left\langle h^{-1}(\bm P_{M}\bm y_h-\widehat{\bm y}_h^o), \bm y_h\right\rangle_{\partial {\mathcal{T}_h}\backslash\mathcal E_h^\partial} \\
			&\quad+\langle  h^{-1} \bm P_{M} \bm y_h,\bm y_h\rangle_{\mathcal E_h^\partial} +(\nabla\cdot\bm y_h, p_h)_{\mathcal T_h} -\langle \bm y_h\cdot\bm n,\widehat{p}_h\rangle_{\partial\mathcal T_h}\\
			&\quad+\langle \mathbb L_h\bm n-h^{-1}(\bm P_{M}\bm y_h-\widehat{\bm y}_h^o),\widehat{\bm y}_h^o \rangle_{\partial\mathcal T_h\backslash\mathcal E^{\partial}_h}\\
			&=\|\mathbb L_h\|^2_{\mathcal T_h}+h^{-1}\|\bm P_{M} \bm y_h-\widehat{\bm y}_h^o\|^2_{\partial\mathcal T_h\backslash \mathcal E_h^\partial}+h^{-1} \|\bm P_{M} \bm y_h\|_{\mathcal E_h^\partial}^2.
		\end{align*}
	\end{proof}

	Similarly, 	for any $(\mathbb{G}_h,\bm z_h,q_h,\widehat{q}_h,\widehat{\bm z}_h^o)\in \mathbb{K}_h\times \bm V_h\times W_h  \times Q_h \times \bm M_h(o)$, we have
	\begin{align}\label{pro_B2}
	\begin{split}
	\hspace{1em}&\hspace{-1em}  \mathscr B(\mathbb G_h,\bm z_h,-q_h,-\widehat{q}_h,\widehat{\bm z}_h^o;\mathbb G_h,\bm z_h,-q_h,-\widehat{q}_h,\widehat{\bm z}_h^o)\\
	&=\|\mathbb G_h\|^2_{\mathcal T_h}+h^{-1}\|\bm P_{M} \bm z_h-\widehat{\bm z}_h^o \|^2_{\partial\mathcal T_h\backslash\mathcal E_h^\partial} +h^{-1}\|\bm P_{M} \bm z_h\|^2_{\mathcal E_h^\partial}.
	\end{split}
	\end{align}

	Next we give a property of $\mathscr B$ that is critically important to our error analysis of this method.
	\begin{lemma}\label{identical_equal}
		For any $(\mathbb{L}_h,\bm y_h,p_h,\widehat{p}_h,\widehat{\bm y}_h^o;\mathbb{G}_h,\bm z_h,q_h,\widehat{q}_h,\widehat{\bm z}_h^o)\in [\mathbb{K}_h\times \bm V_h\times W_h  \times Q_h \times \bm M_h(o)]^2$,
		\begin{align*}
			\mathscr B (\mathbb{L}_h,\bm y_h,p_h,\widehat{p}_h,\widehat{\bm y}_h^o;-\mathbb{G}_h, \bm z_h,q_h,\widehat{q}_h,\widehat{\bm z}_h^o)
			=\mathscr B (\mathbb{G}_h, \bm z_h,-q_h,-\widehat{q}_h,\widehat{\bm z}_h^o;-\mathbb{L}_h,\bm y_h,p_h,\widehat{p}_h,\widehat{\bm y}_h^o).
		\end{align*}
	\end{lemma}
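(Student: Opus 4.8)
The plan is to prove this by direct expansion of both sides using the definition of $\mathscr B$ in \eqref{def_B1}, and to verify that the bilinear-type form $\mathscr B$, when evaluated on two blocks of arguments with the appropriate sign pattern, is ``anti-adjoint'' in a way that the stated symmetry captures. Concretely, I would write out $\mathscr B(\mathbb L_h,\bm y_h,p_h,\widehat p_h,\widehat{\bm y}_h^o;-\mathbb G_h,\bm z_h,q_h,\widehat q_h,\widehat{\bm z}_h^o)$ term by term according to \eqref{def_B1}, with the ``trial'' slot filled by $(\mathbb L_h,\bm y_h,p_h,\widehat p_h,\widehat{\bm y}_h^o)$ and the ``test'' slot by $(-\mathbb G_h,\bm z_h,q_h,\widehat q_h,\widehat{\bm z}_h^o)$; then do the same for the right-hand side, with trial slot $(\mathbb G_h,\bm z_h,-q_h,-\widehat q_h,\widehat{\bm z}_h^o)$ and test slot $(-\mathbb L_h,\bm y_h,p_h,\widehat p_h,\widehat{\bm y}_h^o)$. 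Then I would compare the two expanded expressions term by term.

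The key observation driving the comparison is that the ``symmetric'' terms in $\mathscr B$ — namely $(\mathbb L_h,\mathbb T_1)_{\mathcal T_h}$ and the three penalization terms $\langle h^{-1}P_{\bm M}\bm y_h,\bm v_1\rangle_{\partial\mathcal T_h}$, $\langle h^{-1}\widehat{\bm y}_h^o,\bm v_1\rangle_{\partial\mathcal T_h\setminus\mathcal E_h^\partial}$, $\langle h^{-1}(\bm P_M\bm y_h-\widehat{\bm y}_h^o),\bm\mu_1\rangle_{\partial\mathcal T_h\setminus\mathcal E_h^\partial}$ — pick up the sign $(-1)$ once on each side (from the $-\mathbb G_h$ resp.\ $-\mathbb L_h$ entry, or by pairing $\bm z_h$ against $\bm y_h$ symmetrically), so they match. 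The genuinely ``skew'' terms are those coupling the vector variable to the divergence of a tensor, and the pressure terms: $(\bm y_h,\nabla\cdot\mathbb T_1)_{\mathcal T_h}-(\nabla\cdot\mathbb L_h,\bm v_1)_{\mathcal T_h}$, the boundary trace term $-\langle\widehat{\bm y}_h^o,\mathbb T_1\bm n\rangle_{\partial\mathcal T_h\setminus\mathcal E_h^\partial}$ versus $\langle\mathbb L_h\bm n,\bm\mu_1\rangle_{\partial\mathcal T_h\setminus\mathcal E_h^\partial}$, and the pressure block $-(p_h,\nabla\cdot\bm v_1)_{\mathcal T_h}+\langle\widehat p_h,\bm v_1\cdot\bm n\rangle_{\partial\mathcal T_h}+(\nabla\cdot\bm y_h,w_1)_{\mathcal T_h}-\langle\bm y_h\cdot\bm n,\widehat w_1\rangle_{\partial\mathcal T_h}$. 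For these, substituting the specific arguments and tracking the sign on $\mathbb G_h$, $\mathbb L_h$, $q_h$, $\widehat q_h$ shows that each term on the left reappears on the right, possibly with a compensating sign flip coming from the fact that on the right the pressure block enters as $(-q_h,-\widehat q_h)$ rather than $(q_h,\widehat q_h)$. The cleanest bookkeeping is to collect the ``$\mathbb L_h$–$\bm z_h$ coupling'' terms, the ``$\mathbb G_h$–$\bm y_h$ coupling'' terms, the ``$p_h$–$\bm z_h$'' terms, and the ``$q_h$–$\bm y_h$'' terms into four groups and check that each group is invariant under the swap prescribed in the statement.

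The main obstacle — really the only thing requiring care — is the sign bookkeeping across the pressure arguments, because the statement deliberately inserts $-q_h,-\widehat q_h$ on the right-hand side (to match the minus sign already present in the adjoint equation \eqref{HDG_full_discrete_b}), and one must confirm that this is exactly the sign needed to make the purely skew pressure–divergence coupling terms align rather than cancel against themselves. I would therefore pay particular attention to the four-term pressure group $-(p_h,\nabla\cdot\bm z_h)_{\mathcal T_h}+\langle\widehat p_h,\bm z_h\cdot\bm n\rangle_{\partial\mathcal T_h}+(\nabla\cdot\bm y_h,q_h)_{\mathcal T_h}-\langle\bm y_h\cdot\bm n,\widehat q_h\rangle_{\partial\mathcal T_h}$ appearing on the left and verify it equals the corresponding group on the right after the sign flip; an integration-by-parts identity $(\nabla\cdot\bm v_1,w)_{\mathcal T_h}=-(\bm v_1,\nabla w)_{\mathcal T_h}+\langle\bm v_1\cdot\bm n,w\rangle_{\partial\mathcal T_h}$ is not even needed here since the terms already match structurally. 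Once the four groups are checked, the identity follows immediately; I expect the whole argument to be a page of elementary but careful term-matching, with no analytic content beyond bilinearity of the $L^2$-type pairings.
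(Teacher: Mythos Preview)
Your plan of expanding both sides via \eqref{def_B1} and matching terms group by group is exactly the paper's approach, and almost all of your term-matching claims are correct. However, there is a genuine gap in the pressure block. When you write that for the four-term pressure group ``the terms already match structurally'' and that no integration by parts is needed, you are overlooking a sign mismatch: on the left the $(p_h,\bm z_h)$-coupling contributes
\[
-(p_h,\nabla\cdot\bm z_h)_{\mathcal T_h}+\langle\widehat p_h,\bm z_h\cdot\bm n\rangle_{\partial\mathcal T_h},
\]
while on the right the corresponding contribution (coming from the $(\nabla\cdot\bm y_h,w_1)$ and $\langle\bm y_h\cdot\bm n,\widehat w_1\rangle$ slots with the trial velocity $\bm z_h$ and test pressures $p_h,\widehat p_h$) is
\[
+(\nabla\cdot\bm z_h,p_h)_{\mathcal T_h}-\langle\bm z_h\cdot\bm n,\widehat p_h\rangle_{\partial\mathcal T_h}.
\]
These two expressions differ by a sign, not merely by ordering of arguments; the $q_h,\widehat q_h$ pairings with $\bm y_h$ do match, but the $p_h,\widehat p_h$ pairings with $\bm z_h$ do not. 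No amount of bilinearity bookkeeping closes this, and integration by parts does not help either.

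The paper resolves this by invoking Proposition~\ref{prof_divergence_free}: for the discrete adjoint velocity one has $\bm z_h\in\bm H(\mathrm{div};\Omega)$ and $\nabla\cdot\bm z_h=0$, which (together with \eqref{HDG_discrete2_h}, giving $\bm z_h\cdot\bm n=0$ on $\mathcal E_h^\partial$ and normal continuity across interior edges) forces both mismatched terms to vanish identically. In other words, the identity is \emph{not} a purely algebraic symmetry of $\mathscr B$ valid for arbitrary $\bm z_h\in\bm V_h$; it requires the additional structural constraint that $\bm z_h$ be globally divergence-free with vanishing boundary normal flux. The paper's own statement of the lemma is somewhat loose on this point (it is phrased ``for any''), but the proof explicitly appeals to Proposition~\ref{prof_divergence_free}, and every application of the lemma in the paper is to a $\bm z_h$ satisfying these constraints. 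Your plan will succeed once you add this ingredient; without it, the term-matching cannot close.
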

	\begin{proof}
		By the definition of $\mathscr B$ in \eqref{def_B1}  we have
		\begin{align*}
			\hspace{1em}&\hspace{-1em}\mathscr B (\mathbb{L}_h,\bm y_h,p_h,\widehat{p}_h,\widehat{\bm y}_h^o;-\mathbb{G}_h, \bm z_h,q_h,\widehat{q}_h,\widehat{\bm z}_h^o)\\
			&=-(\mathbb L_h,\mathbb G_h)_{\mathcal{T}_h}-(\bm y_h,\nabla\cdot\mathbb G_h)_{\mathcal{T}_h}+\left\langle \widehat{\bm y}_h^o, \mathbb G_h \bm{n}\right\rangle_{\partial\mathcal{T}_h\backslash \mathcal E_h^\partial}-(\nabla\cdot\mathbb L_h,\bm z_h)_{\mathcal T_h}\\
			&\quad-( p_h,\nabla\cdot \bm z_h)_{\mathcal{T}_h}+\langle \widehat{p}_h,\bm z_h\cdot\bm n \rangle_{\partial\mathcal{T}_h}+ \langle h^{-1} P_{\bm M}\bm y_h, \bm z_h\rangle_{\partial {\mathcal{T}_h}}-\langle h^{-1} \widehat{\bm y}_h^o,\bm z_h \rangle_{\partial \mathcal{T}_h\backslash\mathcal E_h^\partial} \\
			&\quad +(\nabla\cdot\bm y_h, q_h)_{\mathcal T_h} -\langle \bm y_h\cdot\bm n,\widehat{q}_h\rangle_{\partial\mathcal T_h}+\langle {\mathbb L}_h \bm n - h^{-1} (\bm P_M\bm y_h - \widehat{\bm y}_h^o),\widehat{\bm z}_h^o\rangle_{\partial\mathcal T_h\backslash\mathcal E^{\partial}_h}.
		\end{align*}
		Rearrange the terms above to get
		\begin{align*}
			\hspace{1em}&\hspace{-1em}\mathscr B (\mathbb{L}_h,\bm y_h,p_h,\widehat{p}_h,\widehat{\bm y}_h^o;-\mathbb{G}_h, \bm z_h,q_h,\widehat{q}_h,\widehat{\bm z}_h^o)\\
			&=-(\mathbb G_h, \mathbb L_h)_{\mathcal{T}_h}-(\bm z_h,\nabla\cdot\mathbb L_h)_{\mathcal{T}_h}+\left\langle \widehat{\bm z}_h^o, \mathbb L_h \bm{n}\right\rangle_{\partial\mathcal{T}_h\backslash \mathcal E_h^\partial}-(\nabla\cdot\mathbb G_h,\bm y_h)_{\mathcal T_h}\\
			&\quad+(q_h,\nabla\cdot\bm y_h)_{\mathcal T_h}-\langle \widehat{q}_h,\bm y_h\cdot\bm n \rangle_{\partial\mathcal{T}_h}+ \langle h^{-1} P_{\bm M}\bm z_h, \bm y_h\rangle_{\partial {\mathcal{T}_h}}-\langle h^{-1} \widehat{\bm z}_h^o,\bm y_h \rangle_{\partial \mathcal{T}_h\backslash\mathcal E_h^\partial} \\
			&\quad -(\nabla\cdot\bm z_h, p_h)_{\mathcal T_h} +\langle \bm z_h\cdot\bm n,\widehat{p}_h\rangle_{\partial\mathcal T_h}+\langle {\mathbb G}_h \bm n - h^{-1} (\bm P_M\bm z_h - \widehat{\bm z}_h^o),\widehat{\bm y}_h^o\rangle_{\partial\mathcal T_h\backslash\mathcal E^{\partial}_h}\\
			&=\mathscr B (\mathbb{G}_h, \bm z_h,-q_h,-\widehat{q}_h,\widehat{\bm z}_h^o;-\mathbb{L}_h,\bm y_h,p_h,\widehat{p}_h,\widehat{\bm y}_h^o),
		\end{align*}
		{where we used the fact that $\bm z_h\in {\bm H}(\rm div;\Omega)$ and $\nabla\cdot\bm z_h=0$ in  Proposition \ref{prof_divergence_free}.}
	\end{proof}

	To prove the uniqueness of solution of  the HDG formulation, we need to recall the following BDM projection.

	\begin{lemma} \cite[Equation (2.3)]{MR799685}\label{BDM_projection_lemma}
		For any $K \in \mathcal{T}_{h}$ and $\bm{v} \in\left[H^{1}(K)\right]^2$, there exists a unique $\Pi^{\textup{BDM}} \bm{v} \in [\mathcal{P}^{k+1} (K)]^2$ such that
		\begin{subequations}\label{BDM_projection}
			\begin{align}
				\left\langle\Pi^{\textup{BDM}} \bm{v} \cdot \boldsymbol{n}_{e}, w_{k+1}\right\rangle_{e}&=\left\langle\boldsymbol{v} \cdot \boldsymbol{n}_{e}, w_{k+1}\right\rangle_{e} & \forall w_{k+1} \in \mathcal P^{k+1}(e), e \in \partial K, \label{BDM_projection1}\\
				\left(\Pi^{\textup{BDM}} \bm{v}, \nabla p_{k}\right)_{K}&=\left(\boldsymbol{v}, \nabla p_{k}\right)_{K} & \forall p_{k} \in \mathcal P^{k}(K), \label{BDM_projection2}\\
				\left(\Pi^{\textup{BDM}} \bm{v}, \operatorname{curl}\left(b_{K} p_{k-1}\right)\right)_{K}&=\left(\boldsymbol{v},  \operatorname{curl}\left(b_{K} p_{k-1}\right)\right)_{K}, & \forall p_{k-1} \in \mathcal P^{k-1}(K), \label{BDM_projection3}
			\end{align}
		\end{subequations}	
		where {$b_K=\lambda_1\lambda_2\lambda_3$} is a ``bubble'' function and $\textup{curl} \phi = [\partial_y \phi, -\partial_x \phi]^\top$.  If $k =0$, then \eqref{BDM_projection3} is vacuous and $\Pi^{\textup{BDM}}$ is defined by \eqref{BDM_projection1} and \eqref{BDM_projection2}.
	\end{lemma}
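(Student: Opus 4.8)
The plan is to recover this as the classical Brezzi--Douglas--Marini unisolvence argument on the triangle $K$. First I would reduce existence and uniqueness of $\Pi^{\textup{BDM}}\bm v$ to a \emph{counting plus injectivity} statement: since the right-hand sides of \eqref{BDM_projection1}--\eqref{BDM_projection3} depend linearly on $\bm v$, it suffices to show that the linear map sending $\bm w\in[\mathcal P^{k+1}(K)]^2$ to the tuple of functionals appearing on the left-hand sides is bijective, and because the number of functionals equals $\dim[\mathcal P^{k+1}(K)]^2=(k+2)(k+3)$, bijectivity is equivalent to injectivity. The count I would verify is $3(k+2)$ edge moments in \eqref{BDM_projection1}, $\dim\nabla\mathcal P^{k}(K)=\frac{(k+1)(k+2)}{2}-1$ conditions in \eqref{BDM_projection2}, and $\dim\operatorname{curl}(b_K\mathcal P^{k-1}(K))=\dim\mathcal P^{k-1}(K)=\frac{k(k+1)}{2}$ conditions in \eqref{BDM_projection3} (the map $p_{k-1}\mapsto\operatorname{curl}(b_K p_{k-1})$ is injective because $b_K$ vanishes on $\partial K$, so $\operatorname{curl}(b_K p_{k-1})=0$ forces $b_K p_{k-1}$ constant, hence zero, hence $p_{k-1}=0$); these sum to $3(k+2)+\frac{(k+1)(k+2)}{2}-1+\frac{k(k+1)}{2}=(k+2)(k+3)$, valid for all $k\ge 0$ (for $k=0$ the last two counts are $0$, consistent with \eqref{BDM_projection2} carrying no information and \eqref{BDM_projection3} being absent).

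For the injectivity step, I would take $\bm w\in[\mathcal P^{k+1}(K)]^2$ annihilating all left-hand functionals. Choosing $w_{k+1}=\bm w\cdot\bm n_e|_e\in\mathcal P^{k+1}(e)$ in \eqref{BDM_projection1} gives $\bm w\cdot\bm n=0$ on $\partial K$. Integration by parts in \eqref{BDM_projection2}, with the boundary term dropping by what was just shown, together with the divergence theorem applied to the constant test function, yields $(\nabla\cdot\bm w,p_k)_K=0$ for every $p_k\in\mathcal P^{k}(K)$; since $\nabla\cdot\bm w\in\mathcal P^{k}(K)$, taking $p_k=\nabla\cdot\bm w$ forces $\nabla\cdot\bm w=0$. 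As $K$ is simply connected I would then introduce a stream function: there is $\phi\in\mathcal P^{k+2}(K)$, unique up to an additive constant, with $\bm w=\operatorname{curl}\phi$. On each edge $\bm w\cdot\bm n_e=\nabla\phi\cdot\bm\tau_e=\partial_{\bm\tau_e}\phi$, so $\bm w\cdot\bm n=0$ on $\partial K$ means $\phi$ is constant along each edge and hence constant on the connected set $\partial K$; subtracting that constant we may assume $\phi=0$ on $\partial K$, so $\phi$ is divisible by each barycentric coordinate $\lambda_i$ and therefore $\phi=b_K\psi$ with $\psi\in\mathcal P^{k-1}(K)$. Testing \eqref{BDM_projection3} with this $\psi$ then gives $(\bm w,\bm w)_K=(\bm w,\operatorname{curl}(b_K\psi))_K=0$, so $\bm w=0$. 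When $k=0$ this is even shorter: $\mathcal P^{-1}(K)=\{0\}$ forces $\psi=0$, i.e.\ $\phi\in\mathcal P^{2}(K)$ vanishing on $\partial K$ is divisible by the cubic $b_K$ and hence zero, so $\bm w=0$.

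I expect the only delicate points to be bookkeeping rather than depth: checking that the dimension count balances \emph{exactly} (so that injectivity genuinely upgrades to surjectivity), and justifying the two polynomial facts on the simplex --- that a divergence-free field in $[\mathcal P^{k+1}(K)]^2$ is $\operatorname{curl}$ of a potential in $\mathcal P^{k+2}(K)$, and that a polynomial vanishing on $\partial K$ is divisible by $b_K=\lambda_1\lambda_2\lambda_3$ with quotient of degree reduced by three. Both are standard for triangles, and since the paper restricts to triangular meshes the geometric ingredients (connected element boundary, barycentric coordinates, cubic element bubble) are all available; alternatively, the statement may simply be quoted verbatim from \cite[Equation (2.3)]{MR799685}.
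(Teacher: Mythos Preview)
Your argument is correct and is precisely the classical Brezzi--Douglas--Marini unisolvence proof. The paper itself does not prove this lemma: it is stated with the citation \cite[Equation~(2.3)]{MR799685}, and the remark immediately following it explicitly attributes the uniqueness proof to \cite[Lemma~2.1]{MR799685}. So your proposal is not so much a different route as a faithful reconstruction of the argument the paper outsources; the alternative you mention at the end --- simply quoting the result --- is exactly what the authors do.
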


\begin{remark}
	In \cite[Lemma 2.1]{MR799685}, Brezzi, Douglas and Marini proved that the system {\eqref{BDM_projection} determines} $\Pi^{\textup{BDM}}$ uniquely. In other words, the matrix formed from the left hand side of  \eqref{BDM_projection} is non-singular.  Hence, for any $z_1 \in H^1(e)$, $\bm z_2, \bm z_3 \in \left[H^{1}(K)\right]^2$, we can {uniquely} determine $\bm v_h \in [\mathcal{P}^{k+1} (K)]^2$ such that
	\begin{subequations}
		\begin{align}
		\left\langle \bm v_h \cdot \boldsymbol{n}_{e}, w_{k+1}\right\rangle_{e}&=\left\langle z_1, w_{k+1}\right\rangle_{e} & \forall w_{k+1} \in \mathcal P^{k+1}(e), e \in \partial K, \label{BDM_projection4}\\
		\left( \bm v_h, \nabla p_{k}\right)_{K}&=\left(\bm z_2, \nabla p_{k}\right)_{K} & \forall p_{k} \in \mathcal P^{k}(K), \label{BDM_projection5}\\
		\left( \bm v_h, \operatorname{curl}\left(b_{K} p_{k-1}\right)\right)_{K}&=\left(\bm z_3,  \operatorname{curl}\left(b_{K} p_{k-1}\right)\right)_{K}, & \forall p_{k-1} \in \mathcal P^{k-1}(K). \label{BDM_projection6}
		\end{align}
	\end{subequations}	
\end{remark}

	\begin{theorem}\label{exis_uni}
		There exists a unique solution of the HDG discrete optimality system \eqref{HDG_full_discrete}.
	\end{theorem}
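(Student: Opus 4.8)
The plan is to note that \eqref{HDG_full_discrete} is a \emph{square} linear system: the unknowns $(\mathbb L_h,\bm y_h,p_h,\widehat p_h,\widehat{\bm y}_h^o;\mathbb G_h,\bm z_h,q_h,\widehat q_h,\widehat{\bm z}_h^o;u_h)$ and the test functions range over finite-dimensional spaces of the same dimension, so it suffices to prove that $\bm f=0$, $\bm y_d=0$ force the solution to vanish; assume this from now on. First I would isolate $u_h$ and $\bm y_h$: take $(\mathbb T_1,\bm v_1,w_1,\widehat w_1,\bm\mu_1)=(-\mathbb G_h,\bm z_h,q_h,\widehat q_h,\widehat{\bm z}_h^o)$ in \eqref{HDG_full_discrete_a} and $(\mathbb T_2,\bm v_2,w_2,\widehat w_2,\bm\mu_2)=(-\mathbb L_h,\bm y_h,p_h,\widehat p_h,\widehat{\bm y}_h^o)$ in \eqref{HDG_full_discrete_b}; by Lemma~\ref{identical_equal} the two left-hand sides coincide, so their right-hand sides agree, giving $\langle u_h\bm\tau,\,-\mathbb G_h\bm n+h^{-1}\bm z_h\rangle_{\mathcal E_h^\partial}=\|\bm y_h\|_{\mathcal T_h}^2$. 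Next take $\mu_3=u_h$ in \eqref{HDG_full_discrete_e}; since each boundary edge is a straight segment, $h^{-1}u_h\bm\tau$ is a degree-$k$ vector polynomial on it and hence lies in $\bm M_h$, so $\langle h^{-1}\bm P_M\bm z_h,u_h\bm\tau\rangle_{\mathcal E_h^\partial}=\langle h^{-1}\bm z_h,u_h\bm\tau\rangle_{\mathcal E_h^\partial}$ and \eqref{HDG_full_discrete_e} becomes $\langle\mathbb G_h\bm n-h^{-1}\bm z_h,u_h\bm\tau\rangle_{\mathcal E_h^\partial}=\gamma\|u_h\|_{\mathcal E_h^\partial}^2$. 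Adding the two identities yields $\|\bm y_h\|_{\mathcal T_h}^2+\gamma\|u_h\|_{\mathcal E_h^\partial}^2=0$, so $u_h=0$ and $\bm y_h=0$. (This is the one place the triangular-mesh hypothesis is genuinely needed.)

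Next I would kill the primal block. With $u_h=0$, $\bm y_h=0$, $\bm f=0$, equation \eqref{HDG_full_discrete_a} reads $\mathscr B(\mathbb L_h,0,p_h,\widehat p_h,\widehat{\bm y}_h^o;\cdot)=0$; choosing the test tuple equal to the solution tuple and applying Lemma~\ref{property_B} gives $\|\mathbb L_h\|_{\mathcal T_h}^2+h^{-1}\|\widehat{\bm y}_h^o\|_{\partial\mathcal T_h\backslash\mathcal E_h^\partial}^2=0$, so $\mathbb L_h=0$ and $\widehat{\bm y}_h^o=0$. By \eqref{def_B1}, what survives of \eqref{HDG_full_discrete_a} is $-(p_h,\nabla\cdot\bm v_1)_{\mathcal T_h}+\langle\widehat p_h,\bm v_1\cdot\bm n\rangle_{\partial\mathcal T_h}=0$ for all $\bm v_1\in\bm V_h$, i.e., after integration by parts, $(\nabla p_h,\bm v_1)_{\mathcal T_h}+\langle\widehat p_h-p_h,\bm v_1\cdot\bm n\rangle_{\partial\mathcal T_h}=0$. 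On each $K$ I would use the construction in the remark following Lemma~\ref{BDM_projection_lemma} to pick $\bm v_1|_K\in[\mathcal P^{k+1}(K)]^2$ with normal trace on each edge matching $\widehat p_h-p_h|_K\in\mathcal P^{k+1}(e)$ (cf.\ \eqref{BDM_projection4}), with $\nabla\mathcal P^k(K)$-moments equal to those of $\nabla p_h$ (cf.\ \eqref{BDM_projection5}), and with vanishing curl moments; testing the edge condition against $\widehat p_h-p_h$ and the interior condition against $p_h$ produces $\|\nabla p_h\|_{\mathcal T_h}^2+\|\widehat p_h-p_h\|_{\partial\mathcal T_h}^2=0$. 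Hence $p_h$ is piecewise constant with $\widehat p_h$ equal to its trace on every $\partial K$; single-valuedness of $\widehat p_h$ makes $p_h$ globally constant, and $p_h\in W_h^0$ forces $p_h=0$, so $\widehat p_h=0$.

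The adjoint block is handled similarly but requires one extra idea. Since $\bm y_h=0=\bm y_d$, equation \eqref{HDG_full_discrete_b} is $\mathscr B(\mathbb G_h,\bm z_h,-q_h,-\widehat q_h,\widehat{\bm z}_h^o;\cdot)=0$; testing with the solution tuple and using \eqref{pro_B2} gives $\mathbb G_h=0$, $\bm P_M\bm z_h=\widehat{\bm z}_h^o$ on $\partial\mathcal T_h\backslash\mathcal E_h^\partial$ and $\bm P_M\bm z_h=0$ on $\mathcal E_h^\partial$. Unlike $\bm y_h$ in the first step, $\bm z_h$ is not yet known to vanish, so in the first equation of \eqref{HDG_full_discrete_b}, namely \eqref{HDG_discrete2_e}, I would take $\mathbb T_2=\nabla\bm z_h\in\mathbb K_h$; integrating by parts and using that $(\nabla\bm z_h)\bm n|_e\in\bm M_h$ on a straight edge, the boundary terms are annihilated by the two projection relations just found, leaving $\|\nabla\bm z_h\|_{\mathcal T_h}^2=0$. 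Thus $\bm z_h$ is piecewise constant; $\bm P_M\bm z_h=0$ on $\mathcal E_h^\partial$ makes it vanish on the boundary elements, and $\bm P_M\bm z_h=\widehat{\bm z}_h^o$ single-valued across interior edges propagates this through the connected mesh, so $\bm z_h=0$ and $\widehat{\bm z}_h^o=0$. Then \eqref{HDG_full_discrete_b} reduces to $(q_h,\nabla\cdot\bm v_2)_{\mathcal T_h}-\langle\widehat q_h,\bm v_2\cdot\bm n\rangle_{\partial\mathcal T_h}=0$ for all $\bm v_2\in\bm V_h$, and the BDM argument of the previous paragraph gives $q_h=0$, $\widehat q_h=0$. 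All components being zero, \eqref{HDG_full_discrete} is uniquely solvable.

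I expect the main obstacle to be twofold: the combination in the first step, which must be arranged so that Lemma~\ref{identical_equal} cancels everything except a coercive pairing and where the triangular-mesh assumption enters through $\bm P_M$ fixing $h^{-1}u_h\bm\tau$; and the pressure-uniqueness argument, which is essentially an inf--sup statement and is exactly what forces the new trace unknowns $\widehat p_h,\widehat q_h$ and the use of the BDM projection. Recovering $\bm z_h=0$ from $\mathbb G_h=0$ via the $\mathbb T_2=\nabla\bm z_h$ test is the smaller additional subtlety.
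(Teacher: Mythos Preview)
Your argument is correct and follows the same overall strategy as the paper: combine \eqref{HDG_full_discrete_a}--\eqref{HDG_full_discrete_b} via Lemma~\ref{identical_equal} together with \eqref{HDG_full_discrete_e} to obtain $\bm y_h=0$, $u_h=0$; then apply Lemma~\ref{property_B}/\eqref{pro_B2} to kill $\mathbb L_h,\widehat{\bm y}_h^o,\mathbb G_h$; finally use the BDM construction to kill the pressures. The only substantive difference is on the adjoint side. The paper asserts $\widehat{\bm z}_h^o=0$ immediately from \eqref{pro_B2}, but \eqref{pro_B2} only yields $\bm P_M\bm z_h=\widehat{\bm z}_h^o$ on interior faces and $\bm P_M\bm z_h=0$ on boundary faces, since $\bm z_h$ is not yet known to vanish; the paper never returns to show $\bm z_h=0$ explicitly. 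Your additional test $\mathbb T_2=\nabla\bm z_h$ in \eqref{HDG_discrete2_e}, exploiting that $(\nabla\bm z_h)\bm n\in\bm M_h$ on straight edges, is exactly what is needed to close this and conclude $\bm z_h=0$, $\widehat{\bm z}_h^o=0$. Your BDM step also merges into one test what the paper does in two, but that is cosmetic.
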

	\begin{proof}
		Since the system \eqref{HDG_full_discrete} is finite dimensional, we only need to prove the uniqueness.  Therefore, we assume $\bm y_d = \bm f =0$ and we show the system \eqref{HDG_full_discrete} only has the trivial solution.
		
		First, take {$(\mathbb T_1,\bm v_1,w_1,\widehat{w}_1,\bm \mu_1) = (-\mathbb{G}_h,\bm z_h,-q_h,-\widehat{q}_h,\widehat {\bm z}_h^o)$}, $(\mathbb T_2,\bm v_2,w_2,\widehat{w}_2,\bm \mu_2) = \\ (-\mathbb{L}_h,\bm y_h,p_h,
		\widehat{p}_h,\widehat {\bm y}_h^o)$,  and $\mu_3 = - u_h $ in \eqref{HDG_full_discrete}, respectively.  By Lemma \ref{identical_equal} we have
		\begin{align*}
			&\mathscr B (\mathbb{L}_h,\bm y_h,p_h,\widehat{p}_h,\widehat{\bm y}_h^o;-\mathbb{G}_h, \bm z_h,-q_h,-\widehat{q}_h,\widehat{\bm z}_h^o)-\mathscr B (\mathbb{G}_h, \bm z_h,q_h,\widehat{q}_h,\widehat{\bm z}_h^o;-\mathbb{L}_h,\bm y_h,p_h,\widehat{p}_h,\widehat{\bm y}_h^o)  \\
			&=  {-(\bm y_h,\bm y_h)_{\mathcal T_h} -  \gamma \langle u_h, u_h\rangle_{\mathcal E_h^\partial}} \\
			&= 0.
		\end{align*}
		This implies $\bm y_h = u_h = 0$ since $\gamma>0$.
		
		Next, taking $(\mathbb T_1,\bm v_1,w_1,\widehat{w}_1,\bm \mu_1) = (\mathbb L_h,\bm y_h,p_h,\widehat{p}_h,\widehat{\bm y}_h^o)$ in \eqref{HDG_full_discrete_a} and $(\mathbb T_2,\bm v_2,w_2,\widehat{w}_2,\bm \mu_2) = (\mathbb G_h,\bm z_h,
		q_h, \widehat{q}_h, \widehat{\bm z}_h^o)$ in \eqref{HDG_full_discrete_b} and using  Lemma \ref{property_B}, we obtain  $\mathbb{L}_h= \mathbb{G}_h=  0, \widehat {\bm y}_h^o = \widehat {\bm z}_h^o=0$.
		
		Next,  taking $(\mathbb{T}_1,w_1,\widehat{w}_1,\bm \mu_1)=(0,0,0,0)$ and $(\mathbb{T}_2,\bm v_2,w_2,\widehat{w}_2,\bm \mu_2)=(0,0,0,0,0)$ and applying integration by parts gives
		\begin{align} \label{inter_exist_1}
			(\nabla p_h,\bm v_1)_{\mathcal{T}_h}+\langle \widehat{p}_h-p_h,\bm v_1\cdot\bm n \rangle_{\partial\mathcal{T}_h}=0.
		\end{align}
		
		Next, set $z_1 = \widehat p_h - p_h$ in \eqref{BDM_projection4}, $\bm z_2 = \bm 0$  in \eqref{BDM_projection5}, and $\bm z_3 = \bm 0$ in  \eqref{BDM_projection6}.  Then there  exists a unique  $\bm v_1\in [\mathcal{P}^{k+1} (K)]^2$ such that on each element $K$ we have
			\begin{align*}
			\left\langle \bm v_1 \cdot \boldsymbol{n}_{e}, w_{k+1}\right\rangle_{e}&=\left\langle  \widehat p_h - p_h, w_{k+1}\right\rangle_{e} & \forall w_{k+1} \in \mathcal P^{k+1}(e), e \in \partial K, \\
			\left( \bm v_1, \nabla p_{k}\right)_{K}&=0 & \forall p_{k} \in \mathcal P^{k}(K).
			\end{align*}
		This implies that  $(\bm v_1,\nabla p_h)_K=0$ and  $\bm v_1\cdot\bm n= \widehat{p}_h-p_h$ on $\partial K$. This gives $\widehat{p}_h=p_h$.
		
		Finally, {taking} $\bm v_1=\nabla p_h$ in \eqref{inter_exist_1} we have $\nabla p_h=0$, which together with {the fact that} $\widehat{p}_h$ is single-valued on each edge implies $p_h$ is a constant on the whole domain. Moreover, $p_h\in L_0^2(\Omega)$ gives $\widehat{p}_h=p_h=0$. Following the same idea gives $\widehat{q}_h=q_h=0$.
	\end{proof}

	\subsection{Proof of Theorem \ref{main_res}}
	We follow the strategy of our earlier work \cite{GongStokes_Tangential1} and split the proof into
	eight steps.  Consider the following auxiliary problem: find  $(\mathbb L_h(u),\bm y_h(u), p_h(u), \widehat p_h(u), \widehat {\bm y}_h^o(u); \\ \mathbb G_h(u),\bm z_h(u), q_h(u), \widehat q_h(u),\widehat {\bm z}_h^o(u))  \in [\mathbb K_h\times\bm{V}_h\times W_h^0 \times Q_h\times \bm M_h(o)]^2$  such that
	\begin{subequations}\label{HDG_inter_u}
		\begin{align}
			\mathscr B(\mathbb L_h( u),\bm y_h( u),p_h( u),\widehat p_h( u), \widehat{\bm y}_h^o( u);\mathbb T_1,\bm v_1, w_1,\widehat w_1,\bm \mu_1)&=\langle  (P_M u)\bm \tau, h^{-1} \bm v_1+\mathbb T_1\bm n \rangle_{\mathcal E_h^\partial}\nonumber\\
			& \quad+(\bm f,\bm v_1)_{\mathcal T_h}, \label{HDG_u_a}\\
			\mathscr B(\mathbb G_h( u),\bm z_h( u),-q_h( u),-\widehat q_h(u), \widehat{\bm z}_h^o( u);\mathbb T_2,\bm v_2, w_2,\widehat w_2,\bm \mu_2)&=(\bm y_h( u) - \bm y_d,\bm v_2)_{\mathcal T_h}\label{HDG_u_b}
		\end{align}
	\end{subequations}
	for all $(\mathbb T_1,\bm v_1,w_1,\widehat{w}_1,\bm \mu_1;\mathbb T_2,\bm v_2,w_2,\widehat{w}_2,\bm \mu_2) \in [\mathbb K_h\times\bm{V}_h\times W_h^0\times Q_h\times  \bm M_h(o)]^2$.
	
	We also note that although the proof strategy is very similar to \cite{GongStokes_Tangential1}, a simple rewriting of the proofs for the settings of this paper is not enough. For each of the  following lemmas,  we must take care of the spaces of velocity and pressure so that estimates are independent of the pressure.
	
	We begin by bounding the error between the solutions of the auxiliary problem and the mixed form \eqref{TCOSE1}-\eqref{TCOOC} of the optimality system.  Define
	\begin{equation}\label{notation_1}
		\begin{split}
			\delta^{\mathbb{L}} &=\mathbb{L}-{\bm\Pi}_{\mathbb{K}}\mathbb{L},  \qquad\qquad\qquad \qquad\qquad\qquad\varepsilon^{\mathbb{L}}_h={\bm\Pi}_{\mathbb{K}}\mathbb{L}-\mathbb{L}_h(u),\\
			\delta^{\bm y}&=\bm y- \Pi^{\textup{RT}} \bm y, \qquad\qquad \qquad\qquad\qquad\quad \ \  \varepsilon^{\bm y}_h = \Pi^{\textup{RT}}\bm y-\bm y_h(u),\\
			\delta^{p} &= p- \Pi_W p,  \qquad\qquad\qquad \qquad\qquad\qquad \varepsilon^p_h= \Pi_W p-p_h(u),\\
			\delta^{\widehat{p}}&={p-P_Q p}, \qquad\qquad\qquad \qquad\qquad\qquad \  \varepsilon^{\widehat{p}}_h= {P_Q p-\widehat{p}_h( u)},\\
			\delta^{\widehat {\bm y}} &= \bm y-\bm P_{ M} \bm y,  \qquad\qquad\qquad \qquad\qquad\qquad  \varepsilon^{\widehat {\bm y}}_h=\bm P_{ M} \bm y-\widehat{\bm y}_h(u),
		\end{split}
	\end{equation}
	where $\widehat {\bm y}_h( u) = \widehat {\bm y}_h^o( u)$ on $\mathcal E_h^o$ and $\widehat {\bm y}_h( u) =  (P_M  u)\bm{\tau}$ on $\mathcal E_h^{\partial}$, then $\varepsilon_h^{\widehat {\bm y}} = \bm 0$ on $\mathcal E_h^{\partial}$.
	
	\subsubsection*{Step 1: The error equation for part 1 of the auxiliary problem \eqref{HDG_u_a}}
	\begin{lemma}\label{projection_error_lyp}
		Let $(\mathbb L, \bm y, p)$ be the solution of the optimality system \eqref{optimality_system}. Then we have for all $(\mathbb T_1,\bm v_1,w_1, \widehat{w}_1,\bm \mu_1) \in \mathbb K_h\times\bm{V}_h\times W_h^0\times Q_h\times  \bm M_h(o)$ that
		\begin{align*}
			\hspace{1em}&\hspace{-1em}  \mathscr B(\bm \Pi_{\mathbb{K}}\mathbb L, \Pi^{\textup{RT}} \bm y,\Pi_W p, P_Q p ,\bm P_{M} \bm y; \mathbb T_1,\bm v_1, w_1,\widehat{w}_1,\bm \mu_1)\\
			&=(\bm f,\bm v_1)_{\mathcal T_h} + \langle (P_M u)\bm \tau,\mathbb{T}_1\bm n+h^{-1}\bm v_1\rangle_{\mathcal E_h^\partial}-\langle h^{-1}\bm P_{M} \delta^{\bm y},\bm v_1 \rangle_{\partial\mathcal T_h} \\
			&\quad +\langle \delta^{\mathbb{L}} \bm n, \bm v_1\rangle_{\partial\mathcal T_h} -\langle \delta^{\mathbb{L}} \bm n, \bm \mu_1 \rangle_{\partial\mathcal T_h\backslash\mathcal E_h^{\partial}}+ \langle h^{-1}\bm P_{M} \delta^{\bm y},\bm \mu_1 \rangle_{\partial\mathcal T_h\backslash\mathcal E_h^{\partial}}.
		\end{align*}
	\end{lemma}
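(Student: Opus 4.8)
The plan is to substitute the projected exact solution into the definition \eqref{def_B1} of $\mathscr B$ and to regroup the resulting terms by the five test functions $\mathbb T_1,\bm v_1,w_1,\widehat w_1,\bm\mu_1$. Within each group I will first strip the $L^2$-projections via their orthogonality relations \eqref{projection_operator}, \eqref{RT}, \eqref{def_P_M}, then pass to the exact solution by elementwise integration by parts, invoke the mixed formulation \eqref{TCOSE1}--\eqref{TCOSE3} together with the boundary condition $\bm y=u\bm\tau$ and incompressibility $\nabla\cdot\bm y=0$, and finally read off the leftover boundary terms as $\delta^{\mathbb L}$ and $\delta^{\bm y}$. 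Two elementary facts get used repeatedly: on a straight edge $\bm\tau$ is constant, so $\bm P_M(u\bm\tau)=(P_M u)\bm\tau$ and $\langle u\bm\tau,\bm w\rangle_e=\langle(P_M u)\bm\tau,\bm w\rangle_e$ whenever $\bm w$ has degree at most $k$; and the pressure must be kept inside the $\mathbb H(\textup{div})$-tensor $\mathbb L-p\mathbb I$, never appearing by itself.

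For the $\mathbb T_1$-group $(\bm\Pi_{\mathbb K}\mathbb L,\mathbb T_1)_{\mathcal T_h}+(\Pi^{\textup{RT}}\bm y,\nabla\cdot\mathbb T_1)_{\mathcal T_h}-\langle\bm P_M\bm y,\mathbb T_1\bm n\rangle_{\partial\mathcal T_h\backslash\mathcal E_h^\partial}$, all three projections drop because $\nabla\cdot\mathbb T_1$ and $\mathbb T_1\bm n$ have degree at most $k$; integrating $(\bm y,\nabla\cdot\mathbb T_1)_{\mathcal T_h}$ by parts with $\mathbb L=\nabla\bm y$ collapses it to $\langle\bm y,\mathbb T_1\bm n\rangle_{\mathcal E_h^\partial}=\langle(P_M u)\bm\tau,\mathbb T_1\bm n\rangle_{\mathcal E_h^\partial}$. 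The $w_1$- and $\widehat w_1$-groups vanish: $(\nabla\cdot\Pi^{\textup{RT}}\bm y,w_1)_{\mathcal T_h}=0$ by the commutative diagram and $\nabla\cdot\bm y=0$, and $-\langle\Pi^{\textup{RT}}\bm y\cdot\bm n,\widehat w_1\rangle_{\partial\mathcal T_h}=-\langle\bm y\cdot\bm n,\widehat w_1\rangle_{\partial\mathcal T_h}=0$ by \eqref{RT_1}, since $\bm y\cdot\bm n$ is single-valued across interior edges ($\bm y\in\bm H(\textup{div};\Omega)$) and equals $u\,\bm\tau\cdot\bm n=0$ on $\Gamma$.

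The $\bm v_1$-group carries the pressure-robustness argument. Using \eqref{projection_operator_c} and $P_Q$ gives $-(\Pi_W p,\nabla\cdot\bm v_1)_{\mathcal T_h}=-(p\mathbb I,\nabla\bm v_1)_{\mathcal T_h}$ and $\langle P_Q p,\bm v_1\cdot\bm n\rangle_{\partial\mathcal T_h}=\langle p\mathbb I\bm n,\bm v_1\rangle_{\partial\mathcal T_h}$; integrating $-(\nabla\cdot\bm\Pi_{\mathbb K}\mathbb L,\bm v_1)_{\mathcal T_h}$ by parts and replacing $\bm\Pi_{\mathbb K}\mathbb L$ by $\mathbb L$ in the volume and by $\mathbb L-\delta^{\mathbb L}$ on the faces, the volume/face terms assemble into $((\mathbb L-p\mathbb I),\nabla\bm v_1)_{\mathcal T_h}-\langle(\mathbb L-p\mathbb I)\bm n,\bm v_1\rangle_{\partial\mathcal T_h}+\langle\delta^{\mathbb L}\bm n,\bm v_1\rangle_{\partial\mathcal T_h}$, whence—since $\mathbb L-p\mathbb I\in\mathbb H(\textup{div},\Omega)$—integrating back and invoking \eqref{TCOSE2} produces $(\bm f,\bm v_1)_{\mathcal T_h}+\langle\delta^{\mathbb L}\bm n,\bm v_1\rangle_{\partial\mathcal T_h}$. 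The stabilisation part $\langle h^{-1}\bm P_M\Pi^{\textup{RT}}\bm y,\bm v_1\rangle_{\partial\mathcal T_h}-\langle h^{-1}\bm P_M\bm y,\bm v_1\rangle_{\partial\mathcal T_h\backslash\mathcal E_h^\partial}$ becomes $-\langle h^{-1}\bm P_M\delta^{\bm y},\bm v_1\rangle_{\partial\mathcal T_h}+\langle h^{-1}(P_M u)\bm\tau,\bm v_1\rangle_{\mathcal E_h^\partial}$ via $\Pi^{\textup{RT}}\bm y-\bm y=-\delta^{\bm y}$ and $\bm P_M(u\bm\tau)=(P_M u)\bm\tau$ on $\mathcal E_h^\partial$. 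Finally, the $\bm\mu_1$-group: its stabilisation part is $\langle h^{-1}\bm P_M\delta^{\bm y},\bm\mu_1\rangle_{\partial\mathcal T_h\backslash\mathcal E_h^\partial}$, and writing $\bm\Pi_{\mathbb K}\mathbb L=\mathbb L-\delta^{\mathbb L}$ its flux part is $\langle\mathbb L\bm n,\bm\mu_1\rangle_{\partial\mathcal T_h\backslash\mathcal E_h^\partial}-\langle\delta^{\mathbb L}\bm n,\bm\mu_1\rangle_{\partial\mathcal T_h\backslash\mathcal E_h^\partial}$; the first term is zero because $\bm\mu_1$ is single-valued on each interior edge while $\mathbb L\in\mathbb H^{r_{\mathbb L}}(\Omega)$ with $r_{\mathbb L}>1/2$ has a single-valued $L^2$ trace there, so the two element contributions along each interior edge cancel. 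Summing the five groups reproduces the asserted identity.

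The delicate step—and the only place \eqref{reg_assumption} is genuinely used here—is this last cancellation $\langle\mathbb L\bm n,\bm\mu_1\rangle_{\partial\mathcal T_h\backslash\mathcal E_h^\partial}=0$, which requires $\mathbb L$ to have a single-valued trace on interior edges and hence the standing hypothesis $r_{\mathbb L}>1/2$ (this is exactly the obstruction flagged after Theorem \ref{main_res}). The other point to be careful about is organisational rather than technical: one must never integrate by parts against $p$ or $\nabla p$ alone—they carry the pressure, and $\nabla p$ is of negative order—but always keep $\bm\Pi_{\mathbb K}\mathbb L$, $\Pi_W p$, $P_Q p$ bundled as $\mathbb L-p\mathbb I\in\mathbb H(\textup{div},\Omega)$ until \eqref{TCOSE2} is applied. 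Together with the triangular-mesh identity $\bm P_M(u\bm\tau)=(P_M u)\bm\tau$, this is what renders the identity—and every estimate derived from it—free of the pressure.
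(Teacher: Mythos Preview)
Your proof is correct and follows essentially the same approach as the paper's: expand $\mathscr B$ on the projected exact solution, strip the projections by orthogonality and the RT moment conditions, invoke the continuous equations \eqref{TCOSE1}--\eqref{TCOSE3} together with $\bm y=u\bm\tau$ on $\Gamma$, and kill the residual $\langle\mathbb L\bm n,\bm\mu_1\rangle_{\partial\mathcal T_h\backslash\mathcal E_h^\partial}$ via the trace regularity $r_{\mathbb L}>1/2$. Your organisation by test-function groups is tidier, and your explicit bundling of $(\mathbb L-p\mathbb I)$ before integrating back is slightly cleaner than the paper's write-up (which formally carries $\nabla\cdot\mathbb L$ and $(\nabla\cdot\delta^{\mathbb L},\bm v_1)$ as intermediate quantities and only at the end notes $(\delta^{\mathbb L},\nabla\bm v_1)_{\mathcal T_h}=0$), but the substance is the same.
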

	\begin{proof}
		Since $\nabla \cdot \bm y = 0$, by Lemma \ref{Div_free} we have $ \Pi^{\textup{RT}} \bm y \in \bm V_h$.
		By the definition of the operator $\mathscr B$ in \eqref{def_B1} we obtain
		\begin{align*}
			\hspace{1em}&\hspace{-1em}  \mathscr B(\bm \Pi_{\mathbb{K}}\mathbb L, \Pi^{\textup{RT}} \bm y,\Pi_W p, {P_Q p},\bm P_{M} \bm y; \mathbb T_1,\bm v_1, w_1,\widehat{w}_1,\bm \mu_1)\nonumber\\
			&=(\bm \Pi_{\mathbb{K}}\mathbb L,\mathbb T_1)_{\mathcal{T}_h}+(\Pi^{\textup{RT}}\bm y,\nabla\cdot \mathbb T_1)_{\mathcal{T}_h}-\langle \bm P_{M} \bm y, \mathbb T_1 \bm{n}\rangle_{\partial\mathcal{T}_h\backslash \mathcal E_h^\partial}-(\nabla\cdot \Pi_{\mathbb{K}}\mathbb{L},\bm v_1)_{\mathcal{T}_h}\nonumber\\
			&\quad-( \Pi_{W} p,\nabla\cdot \bm v_1)_{\mathcal{T}_h}+\langle{ P_Q p},\bm v_1\cdot\bm n \rangle_{\partial\mathcal{T}_h} +\langle h^{-1} \bm P_{ M} \Pi^{\textup{RT}}\bm y, \bm v_1\rangle_{\partial {\mathcal{T}_h}}\nonumber\\
			&\quad-\langle h^{-1}\bm P_{M}\bm y,\bm v_1 \rangle_{\partial \mathcal{T}_h\backslash\mathcal E_h^\partial} +(\nabla\cdot \Pi^{\textup{RT}} \bm y, w_1)_{\mathcal T_h}-\langle \Pi^{\textup{RT}} \bm y\cdot\bm n,\widehat{w}_1\rangle_{\partial\mathcal T_h}\\
			&\quad+\langle \Pi_{\mathbb{K}}\mathbb{L}\bm n-h^{-1}(\bm P_{ M}\Pi^{\textup{RT}}\bm y-\bm P_{M}\bm y),\bm \mu_1\rangle_{\partial\mathcal T_h\backslash\mathcal E_h^{\partial}}.
		\end{align*}
		By definition of the $L^2$ projections  and the RT projection, we have
		\begin{align*}
			\hspace{1em}&\hspace{-1em}  \mathscr B(\bm \Pi_{\mathbb{K}}\mathbb L, \Pi^{\textup{RT}} \bm y,\Pi_W p, P_M p ,\bm P_{M} \bm y; \mathbb T_1,\bm v_1, w_1,\widehat{w}_1,\bm \mu_1)\\
			&=(\mathbb L,\mathbb T_1)_{\mathcal{T}_h}+(\bm y,\nabla\cdot \mathbb T_1)_{\mathcal{T}_h}-\left\langle  \bm y, \mathbb T_1 \bm{n}\right\rangle_{\partial\mathcal{T}_h\backslash \mathcal E_h^\partial}+(\nabla\cdot\delta^{\mathbb{L}},\bm v_1)_{\mathcal{T}_h} \nonumber\\
			&\quad-(\nabla\cdot \mathbb{L},\bm v_1)_{\mathcal{T}_h}-(p,\nabla\cdot \bm v_1)_{\mathcal{T}_h}+\langle  p,\bm v_1\cdot\bm n \rangle_{\partial\mathcal{T}_h} +\langle h^{-1}\bm P_{M}\bm y,\bm v_1 \rangle_{\partial\mathcal T_h}\nonumber\\
			&\quad-\langle h^{-1}\bm P_{M} \delta^{\bm y},\bm v_1 \rangle_{\partial\mathcal T_h} -\langle h^{-1}\bm P_{M}\bm y,\bm v_1 \rangle_{\partial\mathcal T_h\backslash\mathcal E_h^\partial} +(\nabla\cdot \Pi^{\textup{RT}} \bm y, w_1)_{\mathcal T_h}\\
			&\quad-\langle \bm y\cdot\bm n,\widehat{w}_1\rangle_{\partial\mathcal T_h} +\langle {\bm\Pi}_{\mathbb K} {\mathbb L}\bm n,\bm \mu_1 \rangle_{\partial\mathcal T_h\backslash\mathcal E_h^{\partial}} + \langle h^{-1}\bm P_{M} \delta^{\bm y},\bm \mu_1 \rangle_{\partial\mathcal T_h\backslash\mathcal E_h^{\partial}}.
		\end{align*}
		Moreover, integration by parts gives
		\begin{align*}
			(\nabla\cdot \Pi^{\textup{RT}} \bm y, w_1)_{\mathcal T_h} &= \langle \Pi^{\textup{RT}} \bm y \cdot \bm n, w_1 \rangle_{\partial\mathcal T_h} - 	(\Pi^{\textup{RT}} \bm y, \nabla w_1)_{\mathcal T_h} \\
			&= \langle \bm y \cdot \bm n, w_1 \rangle_{\partial\mathcal T_h} - 	(\bm y, \nabla w_1)_{\mathcal T_h} \\
			&= 	(\nabla\cdot \bm y,  w_1)_{\mathcal T_h} \\
			&=0.
		\end{align*}
		Note that the exact solutions $\mathbb{L}$, $\bm y$ and $p$ satisfy
		\begin{align*}
			(\mathbb L,\mathbb T_1)_{\mathcal{T}_h}+(\bm y,\nabla\cdot\mathbb T_1)_{\mathcal{T}_h}-\langle\bm y,\mathbb T_1\bm n\rangle_{\partial \mathcal{T}_h\backslash\mathcal E_h^\partial}&=\langle  u\bm \tau,\mathbb{T}_1 \bm n \rangle_{\mathcal E_h^\partial},\\
			-(\nabla\cdot(\mathbb{L} - p\mathbb I),  \bm v_1)_{\mathcal{T}_h}&= (\bm f,\bm v_1)_{\mathcal T_h},  \\
			(\nabla\cdot\bm y, w_1)_{\mathcal{T}_h}&=0,\\
			{\langle \bm y\cdot\bm n,\widehat{w}_1\rangle_{\partial\mathcal T_h}}&=0
		\end{align*}
		for all $(\mathbb T_1,\bm v_1,w_1,\widehat{w}_1)\in \mathbb{K}_h \times \bm V_h\times W_h^0\times Q_h$ and $\bm y=u\bm{\tau}$ on $\mathcal E_h^\partial$. Then we have
		\begin{align*}
			\hspace{1em}&\hspace{-1em}  \mathscr B(\bm \Pi_{\mathbb{K}}\mathbb L, \Pi^{\textup{RT}} \bm y,\Pi_W p, P_M p ,\bm P_{M} \bm y; \mathbb T_1,\bm v_1, w_1,\widehat{w}_1,\bm \mu_1)\\
			&=(\bm f,\bm v_1)_{\mathcal T_h} +\langle (P_M u)\bm \tau,\mathbb{T}_1\bm n+h^{-1}\bm v_1\rangle_{\mathcal E_h^\partial}-\langle h^{-1}\bm P_{M} \delta^{\bm y},\bm v_1 \rangle_{\partial\mathcal T_h}\\
			&\quad +(\nabla\cdot\delta^{\mathbb{L}},\bm v_1)_{\mathcal{T}_h}+\langle \bm{\Pi}_{\mathbb K} {\mathbb L}\bm n,\bm \mu_1 \rangle_{\partial\mathcal T_h\backslash\mathcal E_h^{\partial}} + \langle h^{-1}\bm P_{M} \delta^{\bm y},\bm \mu_1 \rangle_{\partial\mathcal T_h\backslash\mathcal E_h^{\partial}}.
		\end{align*}
		Since $\mathbb L\in \mathbb H^{r_{\mathbb L}}(\Omega)$ with $r_{\mathbb L}>1/2$, then $\langle \mathbb L\bm n, \bm \mu_1\rangle_{\partial\mathcal T_h\backslash\mathcal E_h^\partial} = 0$. This implies
		\begin{align*}
			\hspace{1em}&\hspace{-1em}  \mathscr B(\bm \Pi_{\mathbb{K}}\mathbb L, \Pi^{\textup{RT}} \bm y,\Pi_W p, P_M p ,\bm P_{M} \bm y; \mathbb T_1,\bm v_1, w_1,\widehat{w}_1,\bm \mu_1)\\
			&=(\bm f,\bm v_1)_{\mathcal T_h} + \langle (P_M u)\bm \tau,\mathbb{T}_1\bm n+h^{-1}\bm v_1\rangle_{\mathcal E_h^\partial}-\langle h^{-1}\bm P_{M} \delta^{\bm y},\bm v_1 \rangle_{\partial\mathcal T_h} \\
			&\quad +\langle \delta^{\mathbb{L}} \bm n, \bm v_1\rangle_{\partial\mathcal T_h} -\langle \delta^{\mathbb{L}} \bm n, \bm \mu_1 \rangle_{\partial\mathcal T_h\backslash\mathcal E_h^{\partial}}+ \langle h^{-1}\bm P_{M} \delta^{\bm y},\bm \mu_1 \rangle_{\partial\mathcal T_h\backslash\mathcal E_h^{\partial}},
		\end{align*}
		{where we used the fact that $(\mathbb{L}-\bm\Pi_{\mathbb{K}}\mathbb{L},\nabla\bm v_1)_{\mathcal T_h}=0$.}
	\end{proof}
	\begin{remark}\label{whyhighregualirty}
		In \cite{GongStokes_Tangential1}, we used $\mathbb L- p\mathbb I \in \mathbb H(\textup{div},\Omega)$ when $s_{\mathbb L}\le 1/2$. However, $\mathbb L \in \mathbb H(\textup{div},\Omega)$ does not hold here. Hence, we assume $r_{\mathbb L}>1/2$ so that $\mathbb L$ {has} a well-defined trace. Improving the analysis to handle the case $s_{\mathbb L}\le 1/2$ is left to be considered elsewhere.
	\end{remark}	
	
	Subtract part 1 of  \eqref{HDG_u_a} from Lemma \ref{projection_error_lyp} to obtain the following lemma.
	\begin{lemma}\label{LM:3.8}
		For all $(\mathbb T_1,\bm v_1,w_1,\widehat{w}_1,\bm \mu_1) \in \mathbb K_h\times\bm{V}_h\times W_h^0\times Q_h\times  \bm M_h(o)$,  we have
		\begin{align} \label{error_eq_lyp}
			\mathscr B(\varepsilon_h^{\mathbb{L}},\varepsilon^{\bm y}_h,\varepsilon^p_h,\varepsilon_h^{\widehat{p}},\varepsilon^{\widehat{\bm y}}_h;\mathbb T_1,\bm v_1,w_1,\widehat{w}_1,\bm\mu_1) &= -\langle h^{-1}\bm P_{M} \delta^{\bm y},\bm v_1 \rangle_{\partial\mathcal T_h}  +\langle h^{-1}\bm P_{M} \delta^{\bm y},\bm \mu_1 \rangle_{\partial\mathcal T_h\backslash \mathcal E_h^\partial}  \nonumber\\
			&\quad +\langle \delta^{\mathbb{L}} \bm n, \bm v_1\rangle_{\partial\mathcal T_h} -\langle \delta^{\mathbb{L}} \bm n, \bm \mu_1 \rangle_{\partial\mathcal T_h\backslash\mathcal E_h^{\partial}}.
		\end{align}
	\end{lemma}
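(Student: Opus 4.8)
The plan is to obtain \eqref{error_eq_lyp} by a standard Galerkin-orthogonality argument. The operator $\mathscr B$ is linear in its first five arguments, and by the very definitions in \eqref{notation_1} the error quantities $\varepsilon_h^{\mathbb L}$, $\varepsilon_h^{\bm y}$, $\varepsilon_h^{p}$, $\varepsilon_h^{\widehat p}$, $\varepsilon_h^{\widehat{\bm y}}$ are exactly the differences between the projected exact solution $(\bm\Pi_{\mathbb K}\mathbb L,\Pi^{\textup{RT}}\bm y,\Pi_W p,P_Q p,\bm P_M\bm y)$ and the auxiliary HDG solution $(\mathbb L_h(u),\bm y_h(u),p_h(u),\widehat p_h(u),\widehat{\bm y}_h^o(u))$, where for the fifth slot only the interior-edge part matters since $\mathscr B$ draws its fifth argument from $\bm M_h(o)$. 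Consequently
\[
\mathscr B(\varepsilon_h^{\mathbb L},\varepsilon_h^{\bm y},\varepsilon_h^{p},\varepsilon_h^{\widehat p},\varepsilon_h^{\widehat{\bm y}};\mathbb T_1,\bm v_1,w_1,\widehat w_1,\bm\mu_1)
\]
equals the difference of the right-hand side of Lemma \ref{projection_error_lyp} and the right-hand side of \eqref{HDG_u_a}, both tested against the same $(\mathbb T_1,\bm v_1,w_1,\widehat w_1,\bm\mu_1)$.

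First I would invoke Lemma \ref{projection_error_lyp} to express $\mathscr B(\bm\Pi_{\mathbb K}\mathbb L,\Pi^{\textup{RT}}\bm y,\Pi_W p,P_Q p,\bm P_M\bm y;\cdot)$ in terms of $(\bm f,\bm v_1)_{\mathcal T_h}$, the boundary control term $\langle(P_M u)\bm\tau,\mathbb T_1\bm n+h^{-1}\bm v_1\rangle_{\mathcal E_h^\partial}$, and the four $\delta$-terms appearing in \eqref{error_eq_lyp}. Then I would subtract \eqref{HDG_u_a}, whose right-hand side is precisely $\langle(P_M u)\bm\tau,h^{-1}\bm v_1+\mathbb T_1\bm n\rangle_{\mathcal E_h^\partial}+(\bm f,\bm v_1)_{\mathcal T_h}$. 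The data term $(\bm f,\bm v_1)_{\mathcal T_h}$ and the control term cancel identically, and by linearity of $\mathscr B$ in its first five slots the left-hand side collapses to $\mathscr B(\varepsilon_h^{\mathbb L},\varepsilon_h^{\bm y},\varepsilon_h^{p},\varepsilon_h^{\widehat p},\varepsilon_h^{\widehat{\bm y}};\cdot)$, leaving exactly the four $\delta$-terms claimed.

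There is essentially no analytic obstacle here; the work is purely bookkeeping, and the only points that need care are the signs and the edge sets. One must check that the pressure slots are aligned: the auxiliary problem \eqref{HDG_u_a} carries $(p_h(u),\widehat p_h(u))$ (not their negatives) in the third and fourth arguments and Lemma \ref{projection_error_lyp} carries $(\Pi_W p,P_Q p)$, so the differences are indeed $\varepsilon_h^{p}=\Pi_W p-p_h(u)$ and $\varepsilon_h^{\widehat p}=P_Q p-\widehat p_h(u)$ as in \eqref{notation_1}. One must also note that, since the fifth argument lies in $\bm M_h(o)$, only the restriction of $\bm P_M\bm y-\widehat{\bm y}_h^o(u)$ to $\mathcal E_h^o$ enters, consistent with the $\partial\mathcal T_h\setminus\mathcal E_h^\partial$ appearing in the relevant terms and with the observation, recorded right after \eqref{notation_1}, that $\varepsilon_h^{\widehat{\bm y}}=\bm 0$ on $\mathcal E_h^\partial$. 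No estimates are invoked at this stage; the lemma is a pure consistency identity, and the task of exploiting it while keeping the $\delta$-terms free of the pressure is deferred to the subsequent steps.
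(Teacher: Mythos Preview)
Your proposal is correct and follows exactly the paper's approach: the paper simply states that one subtracts \eqref{HDG_u_a} from the identity in Lemma~\ref{projection_error_lyp}, which is precisely the Galerkin-orthogonality bookkeeping you describe. Your additional remarks about sign alignment in the pressure slots and the role of $\bm M_h(o)$ in the fifth argument are accurate and make explicit what the paper leaves implicit.
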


	\subsubsection*{Step 2: Estimate for $\varepsilon_h^{\mathbb{L}}$}
	We first provide a key inequality which was {proven in \cite[Lemma 4.7]{GongStokes_Tangential1}}.
	\begin{lemma}\label{firstlemmastep1}
		We have
		\begin{equation} \label{grad_y_h}
			\|\nabla \varepsilon_h^{\bm y}\|_{\mathcal{T}_h} +h^{-\frac{1}{2}}\|\varepsilon_h^{\bm y} -\varepsilon_h^{\widehat{\bm y}}\|_{\partial \mathcal{T}_h} \lesssim \|\varepsilon_h^{\mathbb{L}}\|_{\mathcal{T}_h} +h^{-\frac{1}{2}} \|\bm P_{M} \varepsilon_h^{\bm y} -\varepsilon_h^{\widehat{\bm y}}\|_{\partial \mathcal{T}_h}.
		\end{equation}
	\end{lemma}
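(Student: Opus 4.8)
The plan is to use the error equation of Lemma~\ref{LM:3.8} with only the flux test function switched on, and then to exploit the gap in polynomial degree between $\bm M_h$ and the trace of $\bm V_h$. First I would set $\bm v_1=\bm 0$, $w_1=0$, $\widehat w_1=0$ and $\bm\mu_1=\bm 0$ in \eqref{error_eq_lyp}; the whole right-hand side then vanishes (every term carries a factor $\bm v_1$ or $\bm\mu_1$) and only the $\mathbb T_1$-terms of $\mathscr B$ survive, so that
\begin{equation*}
(\varepsilon_h^{\mathbb L},\mathbb T_1)_{\mathcal T_h}+(\varepsilon_h^{\bm y},\nabla\cdot\mathbb T_1)_{\mathcal T_h}-\langle\varepsilon_h^{\widehat{\bm y}},\mathbb T_1\bm n\rangle_{\partial\mathcal T_h\backslash\mathcal E_h^\partial}=0\qquad\forall\,\mathbb T_1\in\mathbb K_h.
\end{equation*}
Integrating by parts on each $K$ and using $\varepsilon_h^{\widehat{\bm y}}=\bm 0$ on $\mathcal E_h^\partial$, this becomes $(\nabla\varepsilon_h^{\bm y},\mathbb T_1)_{\mathcal T_h}=(\varepsilon_h^{\mathbb L},\mathbb T_1)_{\mathcal T_h}+\langle\varepsilon_h^{\bm y}-\varepsilon_h^{\widehat{\bm y}},\mathbb T_1\bm n\rangle_{\partial\mathcal T_h}$ for all $\mathbb T_1\in\mathbb K_h$.

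Since $\varepsilon_h^{\bm y}|_K\in[\mathcal P^{k+1}(K)]^2$, we have $\nabla\varepsilon_h^{\bm y}\in\mathbb K_h$, so the next step is to take $\mathbb T_1=\nabla\varepsilon_h^{\bm y}$. The decisive observation is that on every edge $e$ the trace $(\nabla\varepsilon_h^{\bm y})\bm n|_e$ lies in $[\mathcal P^{k}(e)]^2=\bm M_h|_e$, hence $\langle\varepsilon_h^{\bm y}-\bm P_M\varepsilon_h^{\bm y},(\nabla\varepsilon_h^{\bm y})\bm n\rangle_e=0$ by \eqref{def_P_M}; this replaces $\varepsilon_h^{\bm y}-\varepsilon_h^{\widehat{\bm y}}$ by $\bm P_M\varepsilon_h^{\bm y}-\varepsilon_h^{\widehat{\bm y}}$ in the boundary term. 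Cauchy--Schwarz, the discrete trace (inverse) inequality $\|(\nabla\varepsilon_h^{\bm y})\bm n\|_{\partial\mathcal T_h}\lesssim h^{-1/2}\|\nabla\varepsilon_h^{\bm y}\|_{\mathcal T_h}$, and division by $\|\nabla\varepsilon_h^{\bm y}\|_{\mathcal T_h}$ then give
\begin{equation*}
\|\nabla\varepsilon_h^{\bm y}\|_{\mathcal T_h}\lesssim\|\varepsilon_h^{\mathbb L}\|_{\mathcal T_h}+h^{-1/2}\|\bm P_M\varepsilon_h^{\bm y}-\varepsilon_h^{\widehat{\bm y}}\|_{\partial\mathcal T_h}.
\end{equation*}

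It remains to bound the full boundary mismatch that appears on the left of \eqref{grad_y_h}. I would split $\varepsilon_h^{\bm y}-\varepsilon_h^{\widehat{\bm y}}=(\varepsilon_h^{\bm y}-\bm P_M\varepsilon_h^{\bm y})+(\bm P_M\varepsilon_h^{\bm y}-\varepsilon_h^{\widehat{\bm y}})$ and, on each $K$, subtract the element mean $\bar{\bm y}_K$ of $\varepsilon_h^{\bm y}$, which is left unchanged by $\bm P_M$ because constants belong to $\bm M_h$. The $L^2(e)$-optimality of $\bm P_M$ yields $\|\varepsilon_h^{\bm y}-\bm P_M\varepsilon_h^{\bm y}\|_{\partial K}\le\|\varepsilon_h^{\bm y}-\bar{\bm y}_K\|_{\partial K}$, and a trace inequality followed by the Poincar\'e inequality on $K$ bounds the right-hand side by $h^{1/2}\|\nabla\varepsilon_h^{\bm y}\|_K$. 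Summing over $K$ and inserting the previous estimate for $\|\nabla\varepsilon_h^{\bm y}\|_{\mathcal T_h}$, I obtain
\begin{equation*}
h^{-1/2}\|\varepsilon_h^{\bm y}-\varepsilon_h^{\widehat{\bm y}}\|_{\partial\mathcal T_h}\lesssim\|\nabla\varepsilon_h^{\bm y}\|_{\mathcal T_h}+h^{-1/2}\|\bm P_M\varepsilon_h^{\bm y}-\varepsilon_h^{\widehat{\bm y}}\|_{\partial\mathcal T_h}\lesssim\|\varepsilon_h^{\mathbb L}\|_{\mathcal T_h}+h^{-1/2}\|\bm P_M\varepsilon_h^{\bm y}-\varepsilon_h^{\widehat{\bm y}}\|_{\partial\mathcal T_h},
\end{equation*}
and adding this to the bound for $\|\nabla\varepsilon_h^{\bm y}\|_{\mathcal T_h}$ proves \eqref{grad_y_h}.

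The point that needs care is the bookkeeping of polynomial degrees: the argument works precisely because $\bm M_h$ carries one degree less than the trace of $\bm V_h$, so that $(\nabla\varepsilon_h^{\bm y})\bm n$ is invisible to $\bm P_M$, while the complementary piece $\varepsilon_h^{\bm y}-\bm P_M\varepsilon_h^{\bm y}$ is still of order $h^{1/2}$ relative to the gradient. One should also note that the new pressure-trace unknowns $\widehat p_h,\widehat q_h$ of the present scheme never enter, since only the flux test function is used and the right-hand side of \eqref{error_eq_lyp} contains no pressure contribution; this is why the argument of \cite[Lemma~4.7]{GongStokes_Tangential1} transfers to our setting with the degree $k$ (rather than $k+1$) velocity trace.
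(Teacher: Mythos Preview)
Your proof is correct and follows essentially the same approach as \cite[Lemma~4.7]{GongStokes_Tangential1}, to which the paper simply defers for the proof of this lemma. The key steps---testing only with $\mathbb T_1=\nabla\varepsilon_h^{\bm y}\in\mathbb K_h$, using that $(\nabla\varepsilon_h^{\bm y})\bm n|_e\in[\mathcal P^k(e)]^2=\bm M_h|_e$ so the boundary term reduces to $\bm P_M\varepsilon_h^{\bm y}-\varepsilon_h^{\widehat{\bm y}}$, and then controlling $\varepsilon_h^{\bm y}-\bm P_M\varepsilon_h^{\bm y}$ on $\partial K$ via a local Poincar\'e/trace argument---are precisely the standard ingredients.
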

	\begin{lemma} \label{error_energy_norm}
		We have
		\begin{align*}
			\|\varepsilon_h^{\mathbb{L}}\|_{\mathcal{T}_h} +h^{-\frac{1}{2}} \|\bm P_{ M} \varepsilon_h^{\bm y} -\varepsilon_h^{\widehat{\bm y}}\|_{\partial \mathcal{T}_h}\lesssim h^{s_{\mathbb L}}\norm{\mathbb L}_{s_{\mathbb L},\Omega} + h^{s_{\bm y}-1}\norm{\bm y}_{s_{\bm y},\Omega}.
		\end{align*}
	\end{lemma}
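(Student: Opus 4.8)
The plan is to run the standard HDG energy argument, using the error functions themselves as test functions in the error equation of Lemma~\ref{LM:3.8}. Concretely, I would take $(\mathbb T_1,\bm v_1,w_1,\widehat w_1,\bm\mu_1)=(\varepsilon_h^{\mathbb L},\varepsilon_h^{\bm y},\varepsilon_h^p,\varepsilon_h^{\widehat p},\varepsilon_h^{\widehat{\bm y}})$; these are admissible, in particular $\varepsilon_h^{\widehat{\bm y}}\in\bm M_h(o)$ since $\varepsilon_h^{\widehat{\bm y}}=\bm 0$ on $\mathcal E_h^\partial$, and $\varepsilon_h^p\in W_h^0$ because $p\in L_0^2(\Omega)$ forces $\Pi_W p\in W_h^0$. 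Applying Lemma~\ref{property_B} to the left-hand side (where the pressure contributions cancel) and using that $\bm P_M\varepsilon_h^{\bm y}-\varepsilon_h^{\widehat{\bm y}}=\bm P_M\varepsilon_h^{\bm y}$ on $\mathcal E_h^\partial$ so that the two boundary norms in Lemma~\ref{property_B} merge into $h^{-1}\norm{\bm P_M\varepsilon_h^{\bm y}-\varepsilon_h^{\widehat{\bm y}}}_{\partial\mathcal T_h}^2$, I obtain an identity whose left-hand side is $\norm{\varepsilon_h^{\mathbb L}}_{\mathcal T_h}^2+h^{-1}\norm{\bm P_M\varepsilon_h^{\bm y}-\varepsilon_h^{\widehat{\bm y}}}_{\partial\mathcal T_h}^2$ and whose right-hand side is the four boundary terms of \eqref{error_eq_lyp} evaluated at $\bm v_1=\varepsilon_h^{\bm y}$, $\bm\mu_1=\varepsilon_h^{\widehat{\bm y}}$.

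Next I would recombine the right-hand side pairwise. Since $\bm P_M\delta^{\bm y}\in\bm M_h$ and $\varepsilon_h^{\widehat{\bm y}}=\bm 0$ on $\mathcal E_h^\partial$, the two $\delta^{\bm y}$ terms collapse to $-\langle h^{-1}\bm P_M\delta^{\bm y},\,\bm P_M\varepsilon_h^{\bm y}-\varepsilon_h^{\widehat{\bm y}}\rangle_{\partial\mathcal T_h}$, which Cauchy--Schwarz bounds by $h^{-1/2}\norm{\bm P_M\delta^{\bm y}}_{\partial\mathcal T_h}\,h^{-1/2}\norm{\bm P_M\varepsilon_h^{\bm y}-\varepsilon_h^{\widehat{\bm y}}}_{\partial\mathcal T_h}$, whose second factor is already on the left. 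Likewise the two $\delta^{\mathbb L}$ terms collapse to $\langle\delta^{\mathbb L}\bm n,\,\varepsilon_h^{\bm y}-\varepsilon_h^{\widehat{\bm y}}\rangle_{\partial\mathcal T_h}$.

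The one nonroutine point is this last term, because $\varepsilon_h^{\bm y}$ restricted to $\partial\mathcal T_h$ is not controlled directly by the energy norm; this is exactly where Lemma~\ref{firstlemmastep1} is used. That inequality gives $h^{-1/2}\norm{\varepsilon_h^{\bm y}-\varepsilon_h^{\widehat{\bm y}}}_{\partial\mathcal T_h}\lesssim\norm{\varepsilon_h^{\mathbb L}}_{\mathcal T_h}+h^{-1/2}\norm{\bm P_M\varepsilon_h^{\bm y}-\varepsilon_h^{\widehat{\bm y}}}_{\partial\mathcal T_h}$, so Cauchy--Schwarz yields a bound of the form $h^{1/2}\norm{\delta^{\mathbb L}}_{\partial\mathcal T_h}\bigl(\norm{\varepsilon_h^{\mathbb L}}_{\mathcal T_h}+h^{-1/2}\norm{\bm P_M\varepsilon_h^{\bm y}-\varepsilon_h^{\widehat{\bm y}}}_{\partial\mathcal T_h}\bigr)$. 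I expect this trade — replacing the raw boundary trace $\varepsilon_h^{\bm y}$ by the energy norm — to be the main obstacle, in the sense that it is the only place where a nontrivial auxiliary estimate is needed; the remaining manipulations are bookkeeping.

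To finish, I would apply Young's inequality to absorb every occurrence of $\norm{\varepsilon_h^{\mathbb L}}_{\mathcal T_h}$ and $h^{-1/2}\norm{\bm P_M\varepsilon_h^{\bm y}-\varepsilon_h^{\widehat{\bm y}}}_{\partial\mathcal T_h}$ into the left-hand side, leaving $\norm{\varepsilon_h^{\mathbb L}}_{\mathcal T_h}^2+h^{-1}\norm{\bm P_M\varepsilon_h^{\bm y}-\varepsilon_h^{\widehat{\bm y}}}_{\partial\mathcal T_h}^2\lesssim h^{-1}\norm{\bm P_M\delta^{\bm y}}_{\partial\mathcal T_h}^2+h\norm{\delta^{\mathbb L}}_{\partial\mathcal T_h}^2$. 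Then $L^2$-stability of $\bm P_M$ together with the Raviart--Thomas approximation bound gives $\norm{\bm P_M\delta^{\bm y}}_{\partial\mathcal T_h}\le\norm{\delta^{\bm y}}_{\partial\mathcal T_h}\lesssim h^{s_{\bm y}-1/2}\norm{\bm y}_{s_{\bm y},\Omega}$, and the $\bm\Pi_{\mathbb K}$ bound gives $\norm{\delta^{\mathbb L}}_{\partial\mathcal T_h}\lesssim h^{s_{\mathbb L}-1/2}\norm{\mathbb L}_{s_{\mathbb L},\Omega}$; substituting these and taking square roots produces the claimed estimate. Note that this argument only uses $r_{\mathbb L}>1/2$ through the well-definedness of the trace $\delta^{\mathbb L}\bm n$ on $\partial\mathcal T_h$, consistent with Remark~\ref{whyhighregualirty}.
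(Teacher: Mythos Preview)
Your proposal is correct and follows essentially the same route as the paper: test the error equation \eqref{error_eq_lyp} with $(\varepsilon_h^{\mathbb L},\varepsilon_h^{\bm y},\varepsilon_h^p,\varepsilon_h^{\widehat p},\varepsilon_h^{\widehat{\bm y}})$, invoke Lemma~\ref{property_B} on the left, recombine the four boundary terms into $\langle\delta^{\mathbb L}\bm n,\varepsilon_h^{\bm y}-\varepsilon_h^{\widehat{\bm y}}\rangle_{\partial\mathcal T_h}-\langle h^{-1}\delta^{\bm y},\bm P_M\varepsilon_h^{\bm y}-\varepsilon_h^{\widehat{\bm y}}\rangle_{\partial\mathcal T_h}$, and then use Lemma~\ref{firstlemmastep1} together with Young's inequality and the standard projection bounds. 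Your write-up in fact spells out more of the bookkeeping (admissibility of the test functions, the merging of the boundary norms, the final approximation estimates) than the paper does.
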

	\begin{proof}
		First, since $\varepsilon_h^{\widehat{\bm y}}=0$ on $\mathcal E_h^\partial$, the basic property of $\mathscr B$ in Lemma \ref{property_B} gives
		\begin{align*}
			\mathscr B(\varepsilon_h^{\mathbb{L}},\varepsilon^{\bm y}_h,\varepsilon^p_h,\varepsilon_h^{\widehat{p}},\varepsilon^{\widehat{\bm y}}_h;\varepsilon_h^{\mathbb{L}},\varepsilon^{\bm y}_h,\varepsilon^p_h,\varepsilon_h^{\widehat{p}},\varepsilon^{\widehat{\bm y}}_h)=\|\varepsilon_h^{\mathbb{L}}\|^2_{\mathcal{T}_h} +h^{-1} \|\bm P_{ M} \varepsilon_h^{\bm y} -\varepsilon_h^{\widehat{\bm y}}\|^2_{\partial \mathcal{T}_h}.
		\end{align*}
		On the other hand, taking $(\mathbb T_1,\bm v_1,p_1,\widehat p_1,\bm \mu_1)=(\varepsilon_h^{\mathbb{L}},\varepsilon^{\bm y}_h,\varepsilon^p_h,\varepsilon^{\widehat p}_h,\varepsilon^{\widehat{\bm y}}_h)$ in \eqref{error_eq_lyp} gives
		\begin{align*}
			\|\varepsilon_h^{\mathbb{L}}\|^2_{\mathcal{T}_h} +h^{-1} \|\bm P_{ M} \varepsilon_h^{\bm y} -\varepsilon_h^{\widehat{\bm y}}\|^2_{\partial \mathcal{T}_h} = \langle \delta^{\mathbb{L}} \bm n,  \varepsilon_h^{\bm y}  - \varepsilon_h^{\widehat{\bm y}} \rangle_{\partial\mathcal T_h} - \langle h^{-1}\delta^{\bm y},\bm P_{M} \varepsilon_h^{\bm y} -\varepsilon_h^{\widehat{\bm y}} \rangle_{\partial\mathcal T_h}.
		\end{align*}
		By Lemma \ref{firstlemmastep1} and Young's inequality, we have
		\begin{align*}
			\|\varepsilon_h^{\mathbb{L}}\|_{\mathcal{T}_h} +h^{-\frac{1}{2}} \|\bm P_{ M} \varepsilon_h^{\bm y} -\varepsilon_h^{\widehat{\bm y}}\|_{\partial \mathcal{T}_h} \lesssim h^{s_{\mathbb L}}\norm{\mathbb L}_{s_{\mathbb L},\Omega} + h^{s_{\bm y}-1}\norm{\bm y}_{s_{\bm y},\Omega}.
		\end{align*}
	\end{proof}
	%\subsubsection*{Step 3: Estimate for $\varepsilon_h^p$}
	%\begin{lemma}\label{pressure_estimate_p_1}
	%	We have
	%	\begin{align}\label{error_po}
	%		\|{\varepsilon_h^p}\|_{\mathcal T_h} \lesssim h^{s_{\mathbb L}}\norm{\mathbb L}_{s_{\mathbb L},\Omega}+  h^{s_{p}}\norm{p}_{s^{p},\Omega} + h^{s_{\bm y}-1}\norm{\bm y}_{s_{\bm y},\Omega}.
	%	\end{align}
	%\end{lemma}
	\subsubsection*{Step 3: Estimate for $\varepsilon_h^{ y}$ by a duality argument}
	Next, we introduce the dual problem
	\begin{align} \label{dual_pde}
		\mathbb{A}-\nabla\bm \Phi=0~\text{in}\ \Omega,\;\;\;\;-\nabla\cdot\mathbb{A}-\nabla\Psi=\Theta \;\text{in}\ \Omega,\;\;\;\;
		\nabla\cdot\bm \Phi=0~\text{in}\ \Omega, \;\;\;\; \bm \Phi=0~\text{on}\ \partial\Omega.
	\end{align}
	Since the domain $\Omega$ is convex, we have the following regularity estimate:
	\begin{align}\label{regularity_dual}
		\|\mathbb{A}\|_{1,\Omega}+\|\bm \Phi\|_{2,\Omega}+\|\Psi\|_{1,\Omega}\le C\|\Theta\|_{0,\Omega}.
	\end{align}
	Before we estimate  $\varepsilon_h^{\bm y}$,  we introduce the following notation, which is similar to the earlier notation in \eqref{notation_1}:
	\begin{gather*}
		\delta^{\mathbb{A}} =\mathbb{A}-{\bm\Pi _{\mathbb K}} \mathbb{A},  \;\;\;    \delta^{\bm \Phi}=\bm \Phi-  \Pi^{\textup{RT}} \bm \Phi,  \;\;\;
		\delta^{\Psi}=\Psi - \Pi_W \Psi, \;\;\;
		\delta^{\widehat \Psi}=\Psi - P_Q \Psi, \;\;\;  \delta^{\widehat {\bm \Phi}} = \bm \Phi-\bm P_{M}\bm \Phi.
	\end{gather*}
	{Since $\bm\Phi=0$ on $\partial \Omega$, by using Lemma \ref{projection_error_lyp}} we have the following lemma:
	\begin{lemma}\label{projection_error_dual}
		Let $(\mathbb A, \bm \Phi, \Psi)$ be the {solution} of \eqref{dual_pde}, then for all $(\mathbb T_1,\bm v_1, w_1, \widehat{w}_1,\bm \mu_1) \in \mathbb K_h\times\bm{V}_h\times W_h^0\times Q_h\times  \bm M_h(o)$, we have
		\begin{align*}
			\hspace{1em}&\hspace{-1em}\mathscr B(\bm \Pi_{\mathbb{K}}\mathbb A, \Pi^{\textup{RT}} \bm \Phi,\Pi_W \Psi, P_Q \Psi ,\bm P_{M} \bm \Phi; \mathbb T_1,\bm v_1, w_1,\widehat{w}_1,\bm \mu_1)\\
			&=(\Theta,\bm v_1)_{\mathcal T_h}  -\langle h^{-1}\bm P_{M} \delta^{\bm \Phi},\bm v_1 \rangle_{\partial\mathcal T_h}  +\langle h^{-1}\bm P_{M} \delta^{\bm \Phi},\bm \mu_1 \rangle_{\partial\mathcal T_h\backslash \mathcal E_h^\partial}\\
			&\quad  + \langle \delta^{\mathbb{A}} \bm n, \bm v_1\rangle_{\partial\mathcal T_h} -\langle \delta^{\mathbb{A}} \bm n, \bm \mu_1 \rangle_{\partial\mathcal T_h\backslash\mathcal E_h^{\partial}}.
		\end{align*}
	\end{lemma}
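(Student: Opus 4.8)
The plan is to reproduce, almost verbatim, the computation behind Lemma~\ref{projection_error_lyp}, now applied to the dual problem \eqref{dual_pde} in place of the optimality system \eqref{optimality_system}, with the boundary data switched off ($\bm\Phi=0$ on $\Gamma$) and the source $\bm f$ replaced by $\Theta$. First I would check that the left-hand side makes sense: by the elliptic regularity \eqref{regularity_dual}, $\bm\Phi\in\bm H^2(\Omega)\subset\bm H(\textup{div};\Omega)$ and $\nabla\cdot\bm\Phi=0$, so Lemma~\ref{Div_free} gives $\Pi^{\textup{RT}}\bm\Phi\in\bm V_h$ and the five-tuple $(\bm\Pi_{\mathbb K}\mathbb A,\Pi^{\textup{RT}}\bm\Phi,\Pi_W\Psi,P_Q\Psi,\bm P_M\bm\Phi)$ is an admissible first argument of $\mathscr B$.

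Next I would expand $\mathscr B(\bm\Pi_{\mathbb K}\mathbb A,\Pi^{\textup{RT}}\bm\Phi,\Pi_W\Psi,P_Q\Psi,\bm P_M\bm\Phi;\mathbb T_1,\bm v_1,w_1,\widehat w_1,\bm\mu_1)$ from the definition \eqref{def_B1} and strip off the projections term by term via \eqref{projection_operator} and \eqref{RT_1}--\eqref{RT_2}. The key point here — the pressure-robust mechanism — is that the pressure projections $\Pi_W\Psi$ and $P_Q\Psi$ are tested only against $\nabla\cdot\bm v_1\in\mathcal P^k(K)$ and $\bm v_1\cdot\bm n\in\mathcal P^{k+1}(e)$ for $\bm v_1\in\bm V_h$, so they act exactly and leave no pressure error behind; the remaining projections contribute only the error functions $\delta^{\mathbb A}$ and $\bm P_M\delta^{\bm\Phi}$. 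Exactly as in the proof of Lemma~\ref{projection_error_lyp}, the term carrying $w_1$ vanishes because the commuting diagram for $\Pi^{\textup{RT}}$ gives $(\nabla\cdot\Pi^{\textup{RT}}\bm\Phi,w_1)_{\mathcal T_h}=(\nabla\cdot\bm\Phi,w_1)_{\mathcal T_h}=0$. Inserting the equations satisfied by $(\mathbb A,\bm\Phi,\Psi)$ from \eqref{dual_pde} — together with $\bm\Phi=0$ on $\partial\Omega$, which annihilates the boundary-control term $\langle(P_M u)\bm\tau,\cdot\rangle_{\mathcal E_h^\partial}$ present in Lemma~\ref{projection_error_lyp}, and $\langle\bm\Phi\cdot\bm n,\widehat w_1\rangle_{\partial\mathcal T_h}=0$ — I would arrive at
\begin{equation*}
\mathscr B(\cdots)=(\Theta,\bm v_1)_{\mathcal T_h}+(\nabla\cdot\delta^{\mathbb A},\bm v_1)_{\mathcal T_h}-\langle h^{-1}\bm P_M\delta^{\bm\Phi},\bm v_1\rangle_{\partial\mathcal T_h}+\langle\bm\Pi_{\mathbb K}\mathbb A\,\bm n,\bm\mu_1\rangle_{\partial\mathcal T_h\backslash\mathcal E_h^\partial}+\langle h^{-1}\bm P_M\delta^{\bm\Phi},\bm\mu_1\rangle_{\partial\mathcal T_h\backslash\mathcal E_h^\partial}.
\end{equation*}

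Finally I would rewrite the two remaining flux contributions: integrating $(\nabla\cdot\delta^{\mathbb A},\bm v_1)_{\mathcal T_h}$ by parts and using $(\delta^{\mathbb A},\nabla\bm v_1)_{\mathcal T_h}=0$ (valid because $\nabla\bm v_1\in[\mathcal P^k(K)]^{2\times 2}$) gives $(\nabla\cdot\delta^{\mathbb A},\bm v_1)_{\mathcal T_h}=\langle\delta^{\mathbb A}\bm n,\bm v_1\rangle_{\partial\mathcal T_h}$, while since $\mathbb A\in\mathbb H^1(\Omega)$ by \eqref{regularity_dual} its normal trace is single-valued across interior edges, so $\langle\mathbb A\bm n,\bm\mu_1\rangle_{\partial\mathcal T_h\backslash\mathcal E_h^\partial}=0$ and hence $\langle\bm\Pi_{\mathbb K}\mathbb A\,\bm n,\bm\mu_1\rangle_{\partial\mathcal T_h\backslash\mathcal E_h^\partial}=-\langle\delta^{\mathbb A}\bm n,\bm\mu_1\rangle_{\partial\mathcal T_h\backslash\mathcal E_h^\partial}$. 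Substituting these into the displayed expression yields the claimed identity. The one step that is not pure bookkeeping is this last one — the analogue for the dual problem of Remark~\ref{whyhighregualirty}: it requires $\mathbb A=\nabla\bm\Phi$ to possess a well-defined trace, which is precisely why the convexity of $\Omega$ and the full $\mathbb H^1$ regularity \eqref{regularity_dual} of the dual Stokes solution are invoked; the rest is word for word that of Lemma~\ref{projection_error_lyp}.
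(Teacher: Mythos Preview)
Your proposal is correct and takes exactly the approach the paper has in mind: the paper itself does not write out a separate proof of this lemma, but simply remarks that since $\bm\Phi=0$ on $\partial\Omega$ the result follows from Lemma~\ref{projection_error_lyp}, and your proposal is a careful line-by-line execution of that reduction, including the identification of the $\mathbb H^1$ regularity of $\mathbb A$ from \eqref{regularity_dual} as the reason the interior-edge term $\langle\mathbb A\bm n,\bm\mu_1\rangle_{\partial\mathcal T_h\setminus\mathcal E_h^\partial}$ vanishes.
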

	
	\begin{lemma}\label{error_y_lyp}
		We have
		\begin{equation} \label{error_yu}
			\|\varepsilon_h^{\bm y}\|_{\mathcal T_h} \lesssim h^{s_{\mathbb{L}}+1}\|\mathbb{L}\|_{s_{\mathbb{L}},\Omega} + h^{s_{\bm y}}\norm{\bm y}_{s_{\bm y},\Omega}.
		\end{equation}
	\end{lemma}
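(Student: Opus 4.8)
The plan is to use a standard Aubin--Nitsche duality argument, exactly as is done for the energy-norm-to-$L^2$ improvement in HDG methods. First I would take $\Theta = \varepsilon_h^{\bm y}$ in the dual problem \eqref{dual_pde}, so that $\|\varepsilon_h^{\bm y}\|_{\mathcal T_h}^2 = (\Theta,\varepsilon_h^{\bm y})_{\mathcal T_h}$, and then rewrite this right-hand side using Lemma \ref{projection_error_dual} applied with test functions $(\mathbb T_1,\bm v_1,w_1,\widehat w_1,\bm\mu_1) = (\varepsilon_h^{\mathbb L},\varepsilon_h^{\bm y},\varepsilon_h^p,\varepsilon_h^{\widehat p},\varepsilon_h^{\widehat{\bm y}})$. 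This expresses $\|\varepsilon_h^{\bm y}\|_{\mathcal T_h}^2$ as $\mathscr B(\bm\Pi_{\mathbb K}\mathbb A,\Pi^{\textup{RT}}\bm\Phi,\Pi_W\Psi,P_Q\Psi,\bm P_M\bm\Phi;\varepsilon_h^{\mathbb L},\varepsilon_h^{\bm y},\varepsilon_h^p,\varepsilon_h^{\widehat p},\varepsilon_h^{\widehat{\bm y}})$ plus boundary terms involving $\delta^{\bm\Phi}$ and $\delta^{\mathbb A}$ paired against $\varepsilon_h^{\bm y}$ and $\varepsilon_h^{\widehat{\bm y}}$.

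The next step is to swap the roles of the two arguments of $\mathscr B$. Because $\varepsilon_h^{\bm z}$ etc.\ are not in play here, I cannot directly invoke Lemma \ref{identical_equal}; instead I would use the adjoint-consistency structure of $\mathscr B$ by hand: writing out $\mathscr B(\bm\Pi_{\mathbb K}\mathbb A,\ldots;\varepsilon_h^{\mathbb L},\ldots)$ and integrating by parts to move it to (essentially) $\mathscr B(\varepsilon_h^{\mathbb L},\varepsilon_h^{\bm y},\varepsilon_h^p,\varepsilon_h^{\widehat p},\varepsilon_h^{\widehat{\bm y}};\bm\Pi_{\mathbb K}\mathbb A,\Pi^{\textup{RT}}\bm\Phi,-\Pi_W\Psi,-P_Q\Psi,\bm P_M\bm\Phi)$ up to symmetric correction terms — the sign flip on $\Psi$ mirrors the $(-q_h,-\widehat q_h)$ convention already appearing in \eqref{HDG_u_b} and Lemma \ref{identical_equal}. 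Then I substitute the error equation \eqref{error_eq_lyp} from Lemma \ref{LM:3.8}, now tested against $(\bm\Pi_{\mathbb K}\mathbb A,\Pi^{\textup{RT}}\bm\Phi,\pm\Pi_W\Psi,\pm P_Q\Psi,\bm P_M\bm\Phi)$, which converts everything into boundary pairings of $\delta^{\bm y}$ and $\delta^{\mathbb L}$ against projections of the dual variables.

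At that point the estimate is a matter of bounding a sum of boundary inner products. I would group them as: terms of the form $\langle\delta^{\mathbb L}\bm n,\Pi^{\textup{RT}}\bm\Phi - \bm P_M\bm\Phi\rangle$ and $\langle h^{-1}\bm P_M\delta^{\bm y},\Pi^{\textup{RT}}\bm\Phi - \bm P_M\bm\Phi\rangle$, plus the leftover correction terms from the integration by parts, which will involve $\delta^{\bm\Phi}$, $\delta^{\mathbb A}$ paired with $\varepsilon_h^{\bm y}-\varepsilon_h^{\widehat{\bm y}}$ and $\bm P_M\varepsilon_h^{\bm y}-\varepsilon_h^{\widehat{\bm y}}$. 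Each factor is controlled: the $\delta$-projection errors for the dual by the elliptic regularity \eqref{regularity_dual} with $\|\bm\Phi\|_{2,\Omega}+\|\mathbb A\|_{1,\Omega}+\|\Psi\|_{1,\Omega}\lesssim\|\varepsilon_h^{\bm y}\|_{\mathcal T_h}$, giving an extra power of $h$; the $\delta$-projection errors for the primal by the bounds in Section \ref{sec:Projectionoperator} in terms of $h^{s_{\mathbb L}}\|\mathbb L\|_{s_{\mathbb L},\Omega}$ and $h^{s_{\bm y}-1/2}\|\bm y\|_{s_{\bm y},\Omega}$; and the $\varepsilon_h$ factors by Lemma \ref{error_energy_norm} together with Lemma \ref{firstlemmastep1} (to absorb $\|\nabla\varepsilon_h^{\bm y}\|_{\mathcal T_h}$ and $h^{-1/2}\|\varepsilon_h^{\bm y}-\varepsilon_h^{\widehat{\bm y}}\|_{\partial\mathcal T_h}$). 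Collecting powers of $h$ yields one extra order over Lemma \ref{error_energy_norm}, i.e.\ $h^{s_{\mathbb L}+1}\|\mathbb L\|_{s_{\mathbb L},\Omega}+h^{s_{\bm y}}\|\bm y\|_{s_{\bm y},\Omega}$, and dividing through by $\|\varepsilon_h^{\bm y}\|_{\mathcal T_h}$ finishes the proof.

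I expect the main obstacle to be the ``swap'' step: carefully tracking the integration-by-parts correction terms generated when moving $\bm\Pi_{\mathbb K}\mathbb A$, $\Pi^{\textup{RT}}\bm\Phi$ from the first slot of $\mathscr B$ to the second, and verifying that those corrections are themselves boundary terms that telescope or that can be absorbed — in particular making sure no pressure term ($\Psi$ or $P_Q\Psi$) survives uncancelled, since preserving pressure-robustness is the whole point. The divergence-free property $\nabla\cdot\varepsilon_h^{\bm y}=0$ (from Proposition \ref{prof_divergence_free} applied to the auxiliary problem, or from the commuting diagram for $\Pi^{\textup{RT}}$) and the analogous property $\nabla\cdot\Pi^{\textup{RT}}\bm\Phi=0$ will be exactly what kills the $\Psi$-pairings, so the bookkeeping must be arranged to expose those cancellations cleanly.
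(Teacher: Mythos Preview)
Your plan is correct and follows exactly the paper's duality argument, but you are mistaken that Lemma \ref{identical_equal} is unavailable. That lemma is stated for \emph{arbitrary} tuples in $\mathbb K_h\times\bm V_h\times W_h\times Q_h\times\bm M_h(o)$; the only hidden requirement (visible in its proof) is that the velocity slots be $H(\mathrm{div})$-conforming and divergence free. Here both $\varepsilon_h^{\bm y}=\Pi^{\textup{RT}}\bm y-\bm y_h(u)$ and $\Pi^{\textup{RT}}\bm\Phi$ satisfy this by Proposition~\ref{prof_divergence_free} and Lemma~\ref{Div_free}, so the paper simply invokes Lemma~\ref{identical_equal} directly with $(\mathbb G_h,\bm z_h,q_h,\widehat q_h,\widehat{\bm z}_h^o)=(\bm\Pi_{\mathbb K}\mathbb A,\Pi^{\textup{RT}}\bm\Phi,\Pi_W\Psi,P_Q\Psi,\bm P_M\bm\Phi)$, then evaluates the two sides of the resulting identity via Lemma~\ref{projection_error_dual} and the error equation~\eqref{error_eq_lyp}, arriving at precisely the four boundary pairings you list. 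Your ``by hand'' swap would just be reproving Lemma~\ref{identical_equal} in this special case; the cancellation of the $\Psi$-terms you anticipate is exactly what that lemma already packages.
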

	\begin{proof}
		Consider the dual problem \eqref{dual_pde} and let $\Theta = \varepsilon_h^{\bm y}$. Since $\varepsilon_h^{\widehat{\bm y}}=0$ on $\mathcal E_h^{\partial}$, it follows from  Lemmas \ref{identical_equal} and \ref{projection_error_dual}  that
		\begin{align*}
			\hspace{1em}&\hspace{-1em} \mathscr B(\varepsilon_h^{\mathbb{L}},\varepsilon^{\bm y}_h,\varepsilon^p_h,\varepsilon_h^{\widehat{p}},\varepsilon^{\widehat{\bm y}}_h;-\bm \Pi_{\mathbb{K}} \mathbb{A}, \Pi^{\textup{RT}} \bm  \Phi, \Pi_W \Psi,P_Q \Psi,\bm P_{ M} \bm \Phi)\\
			&= \mathscr B(\bm \Pi_{\mathbb{K}} \mathbb{A}, \Pi^{\textup{RT}} \bm  \Phi, -\Pi_W \Psi,-P_Q \Psi,\bm P_{ M} \bm \Phi; -\varepsilon_h^{\mathbb{L}},\varepsilon^{\bm y}_h,\varepsilon^p_h,\varepsilon_h^{\widehat{p}},\varepsilon^{\widehat{\bm y}}_h)\\
			&= \langle \delta^{\mathbb{A}} \bm n, \varepsilon^{\bm y}_h - \varepsilon^{\widehat{\bm y}}_h\rangle_{\partial\mathcal T_h} - \langle h^{-1}\delta^{\bm \Phi},\bm P_{M} \varepsilon_h^{\bm y} -\varepsilon_h^{\widehat{\bm y}} \rangle_{\partial\mathcal T_h}+\|\varepsilon_h^{\bm y}\|_{\mathcal{T}_h}^2 .
		\end{align*}
		On the other hand, taking  $(\mathbb T_1,\bm v_1,w_1,\widehat{w}_1,\bm \mu_1) = ( -\bm \Pi_{\mathbb{K}} \mathbb{A}, \Pi^{\textup{RT}} \bm \Phi, \Pi_W \Psi,P_Q \Psi, \bm P_{ M} \bm \Phi)$ in \eqref{error_eq_lyp}  gives
		\begin{align*}
			\hspace{1em}&\hspace{-1em} \mathscr B(\varepsilon_h^{\mathbb{L}},\varepsilon^{\bm y}_h,\varepsilon^p_h,\varepsilon_h^{\widehat{p}},\varepsilon^{\widehat{\bm y}}_h;-\bm \Pi_{\mathbb{K}} \mathbb{A}, \Pi^{\textup{RT}} \bm  \Phi, \Pi_W \Psi,P_Q \Psi,\bm P_{ M} \bm \Phi) \\
			&= \langle \delta^{\mathbb{L}} \bm n,   \Pi^{\textup{RT}} \bm  \Phi-\bm P_M \bm \Phi \rangle_{\partial\mathcal T_h} +  \langle h^{-1}\delta^{\bm y},\bm P_{M} \delta^{\bm \Phi} \rangle_{\partial\mathcal T_h}.
		\end{align*}
		Then we have
		\begin{align*}
			\|\varepsilon_h^{\bm y}\|_{\mathcal{T}_h}^2 &= \langle \delta^{\mathbb{L}} \bm n,   \Pi^{\textup{RT}} \bm  \Phi-\bm P_M \bm \Phi \rangle_{\partial\mathcal T_h} - \langle \delta^{\mathbb{A}} \bm n, \varepsilon^{\bm y}_h - \varepsilon^{\widehat{\bm y}}_h\rangle_{\partial\mathcal T_h}\\
			& \quad + \langle h^{-1}\delta^{\bm \Phi},\bm P_{M} \varepsilon_h^{\bm y} -\varepsilon_h^{\widehat{\bm y}} \rangle_{\partial\mathcal T_h}+  \langle h^{-1}\delta^{\bm y},\bm P_{M} \delta^{\bm \Phi} \rangle_{\partial\mathcal T_h},
		\end{align*}
		{which together with the approximation properties of the $L^2$-orthogonal projection and the projection $\Pi^{\textup{RT}}$ and  Lemma \ref{error_energy_norm} gives} the desired result.
	\end{proof}

	As a consequence of Lemmas \ref{error_energy_norm} and \ref{error_y_lyp}, a simple application of the triangle inequality gives optimal convergence rates for $\|\mathbb L- \mathbb L_h( u)\|_{\mathcal T_h}$ and $\| \bm y - \bm y_h( u)\|_{\mathcal T_h}$:
	\begin{lemma}\label{lemma:step4_conv_rates}
		Let $(\mathbb L, \bm y, p)$ and $(\mathbb L_h(u), \bm y_h(u), p_h(u))$ be the {solution} of \eqref{optimality_system} and \eqref{HDG_u_a}, respectively. We have
		\begin{subequations}
			\begin{align}
				\|\mathbb{L} -\mathbb{L}_h( u)\|_{\mathcal{T}_h}&{\lesssim} h^{s_{\mathbb L}}\norm{\mathbb L}_{s_{\mathbb L},\Omega}+ h^{s_{\bm y}-1}\norm{\bm y}_{s_{\bm y},\Omega},\\
				\|\bm y-\bm y_h( u)\|_{\mathcal{T}_h}&\lesssim  h^{s_{\mathbb L}+1}\norm{\mathbb L}_{s_{\mathbb L},\Omega} + h^{s_{\bm y}}\norm{\bm y}_{s_{\bm y},\Omega}.
			\end{align}
		\end{subequations}
	\end{lemma}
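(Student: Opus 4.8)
The plan is to obtain both estimates directly from the triangle inequality, now that the real work has been done in Lemmas~\ref{error_energy_norm} and \ref{error_y_lyp}. Using the notation introduced in \eqref{notation_1}, I would write
\[
\mathbb L - \mathbb L_h(u) = \delta^{\mathbb L} + \varepsilon_h^{\mathbb L}, \qquad \bm y - \bm y_h(u) = \delta^{\bm y} + \varepsilon_h^{\bm y},
\]
so that $\norm{\mathbb L - \mathbb L_h(u)}_{\mathcal T_h} \le \norm{\delta^{\mathbb L}}_{\mathcal T_h} + \norm{\varepsilon_h^{\mathbb L}}_{\mathcal T_h}$ and $\norm{\bm y - \bm y_h(u)}_{\mathcal T_h} \le \norm{\delta^{\bm y}}_{\mathcal T_h} + \norm{\varepsilon_h^{\bm y}}_{\mathcal T_h}$.

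For the flux, the projection term is controlled by the $L^2$-projection estimate recalled in Section~\ref{sec:Projectionoperator}, namely $\norm{\delta^{\mathbb L}}_{\mathcal T_h} = \norm{\bm\Pi_{\mathbb K}\mathbb L - \mathbb L}_{\mathcal T_h} \lesssim h^{s_{\mathbb L}}\norm{\mathbb L}_{s_{\mathbb L},\Omega}$, while the discrete term is bounded by Lemma~\ref{error_energy_norm} after discarding the nonnegative boundary contribution on its left-hand side, giving $\norm{\varepsilon_h^{\mathbb L}}_{\mathcal T_h} \lesssim h^{s_{\mathbb L}}\norm{\mathbb L}_{s_{\mathbb L},\Omega} + h^{s_{\bm y}-1}\norm{\bm y}_{s_{\bm y},\Omega}$. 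Adding the two bounds proves the first inequality. For the velocity, I would instead use the Raviart--Thomas approximation estimate $\norm{\delta^{\bm y}}_{\mathcal T_h} = \norm{\Pi^{\textup{RT}}\bm y - \bm y}_{\mathcal T_h} \lesssim h^{s_{\bm y}}\norm{\bm y}_{s_{\bm y},\Omega}$ for the projection term, together with Lemma~\ref{error_y_lyp} for the discrete term, $\norm{\varepsilon_h^{\bm y}}_{\mathcal T_h} \lesssim h^{s_{\mathbb L}+1}\norm{\mathbb L}_{s_{\mathbb L},\Omega} + h^{s_{\bm y}}\norm{\bm y}_{s_{\bm y},\Omega}$; summing yields the second inequality.

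There is essentially no obstacle at this step: the energy argument of Step~2 and the duality argument of Step~3 are already packaged in the lemmas being invoked, and what remains is a one-line triangle inequality. The only point worth emphasising is that pressure robustness is preserved here, since none of the right-hand sides above involves $p$ or $q$; this is precisely the dividend of having decomposed the velocity error through $\Pi^{\textup{RT}}$ (so that $\nabla\cdot\delta^{\bm y}$ stays controlled and the divergence-free structure of Lemma~\ref{Div_free} can be exploited) rather than through the plain $L^2$-projection $\bm\Pi_{\bm V}$.
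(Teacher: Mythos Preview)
Your proposal is correct and follows exactly the same approach as the paper, which states explicitly that the lemma is a consequence of Lemmas~\ref{error_energy_norm} and \ref{error_y_lyp} together with a simple application of the triangle inequality. Your decomposition through $\delta^{\mathbb L}+\varepsilon_h^{\mathbb L}$ and $\delta^{\bm y}+\varepsilon_h^{\bm y}$, combined with the projection estimates and the two cited lemmas, is precisely what the paper intends.
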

	\subsubsection*{Step 4: The error equation for part 2 of the auxiliary problem \eqref{HDG_u_b}}
	We continue to bound the error between the solutions of the auxiliary problem and the mixed form \eqref{TCOSE1}-\eqref{TCOOC} of the optimality system.  In steps 4-5, we focus on the dual variables, i.e., $\mathbb{G}$, $\bm z$ and $q$. We use the following notation
	\begin{equation}\label{notation_3}
		\begin{split}
			\delta^{\mathbb{G}} &=\mathbb{G}-{\bm \Pi_{\mathbb{K}}} \mathbb{G},  \qquad\qquad \qquad\qquad\qquad\;\;\;\;\varepsilon^{\mathbb{G}}_h={\bm \Pi_{\mathbb{K}}} \mathbb{G}-\mathbb{G}_h( u),\\
			\delta^{\bm z}&=\bm z- {\Pi^{\textup{RT}}} \bm z, \qquad\qquad \qquad\qquad\qquad\;\;\;\;\;\varepsilon^{\bm z}_h=  {\Pi^{\textup{RT}}} \bm z-\bm z_h( u),\\
			\delta^q&=q- {\Pi_W} q, \qquad\qquad\qquad \qquad\qquad\;\;\;\;\;\; \;\varepsilon^{q}_h={\Pi_W} q - q_h( u),\\
			\delta^{\widehat{q}}&=q-P_Q q, \qquad\qquad\qquad \qquad\qquad\qquad \     \varepsilon^{\widehat{q}}_h= P_Q q-\widehat{q}_h( u),\\
			\delta^{\widehat {\bm z}} &= \bm z- \bm P_{ M} \bm z,  \qquad\qquad\qquad\qquad\qquad \;\;\;\;\;\;\; \varepsilon^{\widehat {\bm z}}_h=\bm P_{ M} \bm z-\widehat{\bm z}_h( u).
		\end{split}
	\end{equation}
	
	The derivation of the error equation for part 2 of the auxiliary problem \eqref{HDG_u_b} is similar to the analysis for part 1 of the auxiliary problem in step 1. Therefore, we state the result and omit the proof.
	
	\begin{lemma}
		For all $(\mathbb T_2,\bm v_2,w_2,\widehat{w}_2,\bm \mu_2) \in \mathbb K_h\times\bm{V}_h\times W_h^0\times Q_h\times  \bm M_h(o)$,  we have
		\begin{align} \label{error_eq_Gzq}
			\begin{split}
				\hspace{1em}&\hspace{-1em}\mathscr B(\varepsilon_h^{\mathbb{G}},\varepsilon^{\bm z}_h,-\varepsilon^q_h,-\varepsilon_h^{\widehat{q}},\varepsilon^{\widehat{\bm z}}_h;\mathbb T_2,\bm v_2,w_2,\widehat{w}_2,\bm\mu_2) \\
				&=-\langle h^{-1}\bm P_{M} \delta^{\bm z},\bm v_2 \rangle_{\partial\mathcal T_h}  +\langle h^{-1}\bm P_{M} \delta^{\bm z},\bm \mu_2 \rangle_{\partial\mathcal T_h\backslash \mathcal E_h^\partial} \\
				&\quad  + \langle \delta^{\mathbb{G}} \bm n, \bm v_2\rangle_{\partial\mathcal T_h} -\langle \delta^{\mathbb{G}} \bm n, \bm \mu_2 \rangle_{\partial\mathcal T_h\backslash\mathcal E_h^{\partial}} +(\bm y - \bm y_h(u), \bm v_2)_{\mathcal T_h}.
			\end{split}
		\end{align}
	\end{lemma}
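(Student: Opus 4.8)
The plan is to reproduce, step for step, the three-stage argument of Step~1 (Lemmas~\ref{projection_error_lyp} and~\ref{LM:3.8}), now applied to the dual triple $(\mathbb G,\bm z,q)$ and the dual equations \eqref{TCOASE1}--\eqref{TCOASE3}. Since $\mathscr B$ is bilinear, the definitions in \eqref{notation_3} give
\begin{align*}
\mathscr B(\varepsilon_h^{\mathbb{G}},\varepsilon^{\bm z}_h,-\varepsilon^q_h,-\varepsilon_h^{\widehat{q}},\varepsilon^{\widehat{\bm z}}_h;\cdot)
&=\mathscr B(\bm\Pi_{\mathbb K}\mathbb G,\Pi^{\textup{RT}}\bm z,-\Pi_W q,-P_Q q,\bm P_{M}\bm z;\cdot)\\
&\quad-\mathscr B(\mathbb G_h(u),\bm z_h(u),-q_h(u),-\widehat q_h(u),\widehat{\bm z}_h^o(u);\cdot),
\end{align*}
and the second term on the right equals $(\bm y_h(u)-\bm y_d,\bm v_2)_{\mathcal T_h}$ by \eqref{HDG_u_b}. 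It therefore suffices to prove the dual analogue of Lemma~\ref{projection_error_lyp}: for all admissible test functions,
\[
\mathscr B(\bm\Pi_{\mathbb K}\mathbb G,\Pi^{\textup{RT}}\bm z,-\Pi_W q,-P_Q q,\bm P_{M}\bm z;\mathbb T_2,\bm v_2,w_2,\widehat w_2,\bm\mu_2)
=(\bm y-\bm y_d,\bm v_2)_{\mathcal T_h}+R(\bm v_2,\bm\mu_2),
\]
where $R(\bm v_2,\bm\mu_2)$ is the sum of the four $\delta$-terms on the right of \eqref{error_eq_Gzq}; subtracting then converts $(\bm y-\bm y_d)-(\bm y_h(u)-\bm y_d)$ into $\bm y-\bm y_h(u)$, which is the asserted identity.

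To prove the projection identity I would expand $\mathscr B(\bm\Pi_{\mathbb K}\mathbb G,\Pi^{\textup{RT}}\bm z,-\Pi_W q,-P_Q q,\bm P_{M}\bm z;\cdot)$ term by term via \eqref{def_B1} and remove each projection using its defining orthogonality against polynomials of the appropriate degree: $(\bm\Pi_{\mathbb K}\mathbb G,\mathbb T_2)_{\mathcal T_h}=(\mathbb G,\mathbb T_2)_{\mathcal T_h}$ and $(\bm\Pi_{\mathbb K}\mathbb G,\nabla\bm v_2)_{\mathcal T_h}=(\mathbb G,\nabla\bm v_2)_{\mathcal T_h}$; $(\Pi^{\textup{RT}}\bm z,\nabla\cdot\mathbb T_2)_{\mathcal T_h}=(\bm z,\nabla\cdot\mathbb T_2)_{\mathcal T_h}$ and $\langle\Pi^{\textup{RT}}\bm z\cdot\bm n,w\rangle_e=\langle\bm z\cdot\bm n,w\rangle_e$ for $w$ of degree $\le k+1$; $(\Pi_W q,\nabla\cdot\bm v_2)_{\mathcal T_h}=(q,\nabla\cdot\bm v_2)_{\mathcal T_h}$ and $\langle P_Q q,\bm v_2\cdot\bm n\rangle_{\partial\mathcal T_h}=\langle q,\bm v_2\cdot\bm n\rangle_{\partial\mathcal T_h}$ because $\bm v_2\cdot\bm n|_e\in\mathcal P^{k+1}(e)=Q_h|_e$; and $\langle\bm P_M\bm z,\bm\mu\rangle_e=\langle\bm z,\bm\mu\rangle_e$ together with $\bm P_M\Pi^{\textup{RT}}\bm z-\bm P_M\bm z=-\bm P_M\delta^{\bm z}$. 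Element-wise integration by parts on the flux divergence term gives $-(\nabla\cdot\mathbb G,\bm v_2)_{\mathcal T_h}+\langle\delta^{\mathbb G}\bm n,\bm v_2\rangle_{\partial\mathcal T_h}$, and on the pressure terms gives $(q,\nabla\cdot\bm v_2)_{\mathcal T_h}-\langle q,\bm v_2\cdot\bm n\rangle_{\partial\mathcal T_h}=-(\nabla q,\bm v_2)_{\mathcal T_h}$; combining these two contributions yields $-(\nabla\cdot(\mathbb G+q\mathbb I),\bm v_2)_{\mathcal T_h}$, which equals $(\bm y-\bm y_d,\bm v_2)_{\mathcal T_h}$ by \eqref{TCOASE2} (the piecewise and global integrals coinciding because $\mathbb G\in\mathbb H^{r_{\mathbb G}}(\Omega)$ with $r_{\mathbb G}>1$ and $q$ is likewise regular by Theorem~\ref{regularity_res}, so no interior jumps arise). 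The remaining element-boundary terms collapse using the exact relations $\mathbb G=\nabla\bm z$ (equivalently \eqref{TCOASE1}), $\nabla\cdot\bm z=0$ and \eqref{TCOASE3}, the commuting property $\nabla\cdot(\Pi^{\textup{RT}}\bm z)=\Pi(\nabla\cdot\bm z)=0$, and $\bm z=0$ on $\Gamma$ --- the last of which also kills $\langle h^{-1}\bm P_M\bm z,\bm v_2\rangle_{\mathcal E_h^\partial}$, in contrast with the primal case where the boundary control term survives. By Lemma~\ref{Div_free}, $\Pi^{\textup{RT}}\bm z\in\bm V_h$, so all the test-function substitutions are legitimate, exactly as in the proof of Lemma~\ref{projection_error_lyp}.

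One point that uses the standing assumption $r_{\mathbb G}>1/2$: in the interior-edge term $\langle\bm\Pi_{\mathbb K}\mathbb G\,\bm n,\bm\mu_2\rangle_{\partial\mathcal T_h\backslash\mathcal E_h^\partial}=\langle\mathbb G\,\bm n,\bm\mu_2\rangle_{\partial\mathcal T_h\backslash\mathcal E_h^\partial}-\langle\delta^{\mathbb G}\bm n,\bm\mu_2\rangle_{\partial\mathcal T_h\backslash\mathcal E_h^\partial}$, the first piece vanishes since $\mathbb G$ has a single-valued $L^2$ trace on each interior edge and $\bm\mu_2\in\bm M_h(o)$ is single-valued, so the two adjacent-element contributions cancel --- precisely as $\langle\mathbb L\bm n,\bm\mu_1\rangle_{\partial\mathcal T_h\backslash\mathcal E_h^\partial}=0$ in Lemma~\ref{projection_error_lyp}; cf.\ Remark~\ref{whyhighregualirty}.

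The only genuinely delicate part is tracking the minus signs carried by the third and fourth arguments $-\Pi_W q$, $-P_Q q$ and verifying that $q$ and its numerical trace $\widehat q$ disappear \emph{completely} from the final identity --- this cancellation is the algebraic heart of pressure-robustness. It works because $\nabla\cdot\bm v_2\in\mathcal P^k$ and $\bm v_2\cdot\bm n|_e\in\mathcal P^{k+1}(e)$ match the ranges of $\Pi_W$ and $P_Q$ exactly, so that no projection error is produced when replacing $\Pi_W q,P_Q q$ by $q$, and what remains of the pressure is absorbed through \eqref{TCOASE2} with source $\bm y-\bm y_d\in\bm L^2(\Omega)$. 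Everything else is the routine computation already carried out in Step~1, which is why the paper records only the statement.
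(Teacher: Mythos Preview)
Your proposal is correct and follows exactly the approach the paper intends: the paper itself omits the proof, stating that ``the derivation of the error equation for part 2 of the auxiliary problem \eqref{HDG_u_b} is similar to the analysis for part 1 of the auxiliary problem in step 1,'' and your argument reproduces precisely that Step~1 computation (Lemmas~\ref{projection_error_lyp} and~\ref{LM:3.8}) with $(\mathbb G,\bm z,q)$ in place of $(\mathbb L,\bm y,p)$, the homogeneous boundary datum $\bm z|_\Gamma=0$ in place of $u\bm\tau$, and the source $\bm y-\bm y_d$ in place of $\bm f$. Your tracking of the sign convention for the dual pressure and the observation that the pressure projection errors vanish because $\nabla\cdot\bm v_2\in\mathcal P^k$ and $\bm v_2\cdot\bm n|_e\in\mathcal P^{k+1}(e)$ are both accurate and to the point.
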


	\subsubsection*{Step 5: Estimate for $\varepsilon_h^{\mathbb{G}}$}
	Before we estimate $\varepsilon_h^{\mathbb{G}}$, we give the following discrete Poincar\'{e} inequality from \cite[Proposition A.2]{MR3626531}.
	\begin{lemma}\label{lemma:discr_Poincare_ineq}
		We have
		\begin{align}\label{poin_in}
			\|\varepsilon_h^{\bm z}\|_{\mathcal T_h} \le C(\|\nabla \varepsilon_h^{\bm z}\|_{\mathcal T_h} + h^{-\frac 1 2} \|\varepsilon_h^{\bm z} - \varepsilon_h^{\widehat {\bm z}}\|_{\partial\mathcal T_h}).
		\end{align}
	\end{lemma}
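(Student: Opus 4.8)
The plan is to prove this discrete Poincar\'e--Friedrichs inequality by a duality argument against the (componentwise) Poisson problem on the convex domain $\Omega$, exactly along the lines of \cite[Proposition A.2]{MR3626531}. First I would introduce the auxiliary problem
\[
-\Delta\bm\phi=\varepsilon_h^{\bm z}\ \text{in}\ \Omega,\qquad \bm\phi=\bm 0\ \text{on}\ \partial\Omega,
\]
and use the elliptic regularity estimate $\|\bm\phi\|_{2,\Omega}\lesssim\|\varepsilon_h^{\bm z}\|_{\mathcal T_h}$, which is available because $\Omega$ is convex. Testing with $\varepsilon_h^{\bm z}$ and integrating by parts element by element gives
\[
\|\varepsilon_h^{\bm z}\|_{\mathcal T_h}^2=(\varepsilon_h^{\bm z},-\Delta\bm\phi)_{\mathcal T_h}=(\nabla\varepsilon_h^{\bm z},\nabla\bm\phi)_{\mathcal T_h}-\langle\varepsilon_h^{\bm z},(\nabla\bm\phi)\bm n\rangle_{\partial\mathcal T_h}.
\]

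The key step is then to insert the numerical trace $\varepsilon_h^{\widehat{\bm z}}$ and to observe that $\langle\varepsilon_h^{\widehat{\bm z}},(\nabla\bm\phi)\bm n\rangle_{\partial\mathcal T_h}=0$. Indeed, $\varepsilon_h^{\widehat{\bm z}}=\bm P_M\bm z-\widehat{\bm z}_h(u)$ is single-valued on each interior edge, since both $\bm P_M\bm z$ and $\widehat{\bm z}_h^o(u)\in\bm M_h(o)$ are functions defined on $\mathcal E_h$; and since $\bm\phi\in\bm H^2(\Omega)$, the quantity $(\nabla\bm\phi)\bm n$ likewise has a single-valued trace on interior edges, with opposite orientations of $\bm n$ on the two adjacent elements, so the interior-edge contributions cancel. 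On $\mathcal E_h^\partial$ we have $\varepsilon_h^{\widehat{\bm z}}=\bm 0$, because $\bm z=\bm 0$ on $\Gamma$ and the numerical trace of the adjoint state vanishes there. Hence
\[
\|\varepsilon_h^{\bm z}\|_{\mathcal T_h}^2=(\nabla\varepsilon_h^{\bm z},\nabla\bm\phi)_{\mathcal T_h}-\langle\varepsilon_h^{\bm z}-\varepsilon_h^{\widehat{\bm z}},(\nabla\bm\phi)\bm n\rangle_{\partial\mathcal T_h}.
\]

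It then remains to estimate the right-hand side. The volume term is bounded by Cauchy--Schwarz and $\|\nabla\bm\phi\|_{0,\Omega}\le\|\bm\phi\|_{2,\Omega}$. For the boundary term I would apply the scaled trace inequality $\|v\|_{0,\partial K}^2\lesssim h_K^{-1}\|v\|_{0,K}^2+h_K|v|_{1,K}^2$, valid for $v\in H^1(K)$, componentwise to $\nabla\bm\phi$; summing over $K\in\mathcal T_h$ and using quasi-uniformity of $\{\mathcal T_h\}$ yields $\|(\nabla\bm\phi)\bm n\|_{\partial\mathcal T_h}\lesssim h^{-1/2}\|\bm\phi\|_{2,\Omega}$. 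Combining these bounds with Cauchy--Schwarz on $\partial\mathcal T_h$ gives
\[
\|\varepsilon_h^{\bm z}\|_{\mathcal T_h}^2\lesssim\Big(\|\nabla\varepsilon_h^{\bm z}\|_{\mathcal T_h}+h^{-1/2}\|\varepsilon_h^{\bm z}-\varepsilon_h^{\widehat{\bm z}}\|_{\partial\mathcal T_h}\Big)\|\bm\phi\|_{2,\Omega},
\]
and inserting $\|\bm\phi\|_{2,\Omega}\lesssim\|\varepsilon_h^{\bm z}\|_{\mathcal T_h}$ and dividing by $\|\varepsilon_h^{\bm z}\|_{\mathcal T_h}$ (the claim being trivial when this vanishes) completes the argument. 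I expect the only genuinely delicate point to be the vanishing of $\langle\varepsilon_h^{\widehat{\bm z}},(\nabla\bm\phi)\bm n\rangle_{\partial\mathcal T_h}$: one must verify carefully that $\varepsilon_h^{\widehat{\bm z}}$ is single-valued on interior faces and identically zero on boundary faces, and that $\bm\phi\in\bm H^2(\Omega)$, which is precisely where the convexity of $\Omega$ enters. The remaining ingredients --- elementwise integration by parts, the scaled trace inequality, and Cauchy--Schwarz/Young --- are routine.
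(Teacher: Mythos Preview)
Your proposal is correct and follows the standard duality argument from \cite[Proposition~A.2]{MR3626531}. The paper itself does not give a proof of this lemma; it simply cites that reference, so you are supplying the details the paper omits, and your verification that $\varepsilon_h^{\widehat{\bm z}}$ is single-valued on interior faces and vanishes on $\mathcal E_h^\partial$ is exactly what is needed to apply the cited result in the present setting.
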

	\begin{lemma}\label{lemma:step6_main_lemma}
		We have
		\begin{subequations}
			\begin{align}
				\hspace{3em}&\hspace{-3em}\|\varepsilon_h^{\mathbb G}\|_{\mathcal{T}_h}+h^{-\frac 1 2}\|{\bm P_M\varepsilon_h^{\bm z}-\varepsilon_h^{\widehat{\bm z}}}\|_{\partial \mathcal T_h}\nonumber\\
				&\lesssim h^{s_{\mathbb L}+1}\norm{\mathbb L}_{s_{\mathbb L},\Omega} + h^{s_{\bm y}}\norm{\bm y}_{s_{\bm y},\Omega}+ h^{s_{\mathbb G}}\norm{\mathbb G}_{s_{\mathbb G},\Omega} + h^{s_{\bm z}-1}\norm{\bm z}_{s_{\bm z},\Omega},\label{G_error}\\
				\|\varepsilon_h^{\bm z}\|_{\mathcal{T}_h} &\lesssim h^{s_{\mathbb L}+1}\norm{\mathbb L}_{s_{\mathbb L},\Omega} + h^{s_{\bm y}}\norm{\bm y}_{s_{\bm y},\Omega}+ h^{s_{\mathbb G}}\norm{\mathbb G}_{s_{\mathbb G},\Omega}+ h^{s_{\bm z}-1}\norm{\bm z}_{s_{\bm z},\Omega}.\label{z_error}
			\end{align}
		\end{subequations}
	\end{lemma}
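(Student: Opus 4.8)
The plan is to replay, for the dual triple $(\mathbb G,\bm z,q)$, the chain of arguments that produced Lemmas~\ref{error_energy_norm} and \ref{error_y_lyp} for the primal variables, using the energy identity \eqref{pro_B2} and the dual error equation \eqref{error_eq_Gzq}. The only genuinely new feature is the coupling term $(\bm y-\bm y_h(u),\bm v_2)_{\mathcal T_h}$ on the right-hand side of \eqref{error_eq_Gzq}, which links the dual error to the already-controlled primal error; note that the pressure errors $\varepsilon_h^q,\varepsilon_h^{\widehat q}$ never appear on the right of \eqref{error_eq_Gzq}, which is what ultimately keeps the final bounds pressure-free.

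First I would test \eqref{error_eq_Gzq} with $(\mathbb T_2,\bm v_2,w_2,\widehat w_2,\bm\mu_2)=(\varepsilon_h^{\mathbb G},\varepsilon_h^{\bm z},-\varepsilon_h^q,-\varepsilon_h^{\widehat q},\varepsilon_h^{\widehat{\bm z}})$. Since $\bm z=0$ on $\Gamma$ forces $\varepsilon_h^{\widehat{\bm z}}=0$ on $\mathcal E_h^\partial$, identity \eqref{pro_B2} collapses the left-hand side to $\norm{\varepsilon_h^{\mathbb G}}_{\mathcal T_h}^2+h^{-1}\norm{\bm P_M\varepsilon_h^{\bm z}-\varepsilon_h^{\widehat{\bm z}}}_{\partial\mathcal T_h}^2$, while the self-adjointness of $\bm P_M$ and the fact that $\varepsilon_h^{\widehat{\bm z}}$ is supported on $\mathcal E_h^o$ turn the right-hand side into
\[
\langle\delta^{\mathbb G}\bm n,\varepsilon_h^{\bm z}-\varepsilon_h^{\widehat{\bm z}}\rangle_{\partial\mathcal T_h}
-\langle h^{-1}\delta^{\bm z},\bm P_M\varepsilon_h^{\bm z}-\varepsilon_h^{\widehat{\bm z}}\rangle_{\partial\mathcal T_h}
+(\bm y-\bm y_h(u),\varepsilon_h^{\bm z})_{\mathcal T_h}.
\]
I would also record the dual counterpart of Lemma~\ref{firstlemmastep1}, namely $\norm{\nabla\varepsilon_h^{\bm z}}_{\mathcal T_h}+h^{-1/2}\norm{\varepsilon_h^{\bm z}-\varepsilon_h^{\widehat{\bm z}}}_{\partial\mathcal T_h}\lesssim\norm{\varepsilon_h^{\mathbb G}}_{\mathcal T_h}+h^{-1/2}\norm{\bm P_M\varepsilon_h^{\bm z}-\varepsilon_h^{\widehat{\bm z}}}_{\partial\mathcal T_h}$: because the $\mathbb G$-block \eqref{HDG_discrete2_e}, \eqref{HDG_discrete2_j} of the HDG system has exactly the structure of the $\mathbb L$-block and $\varepsilon_h^{\widehat{\bm z}}$ vanishes on the boundary, the proof of \cite[Lemma~4.7]{GongStokes_Tangential1} applies verbatim. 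Combined with the discrete Poincar\'e inequality (Lemma~\ref{lemma:discr_Poincare_ineq}) this also gives $\norm{\varepsilon_h^{\bm z}}_{\mathcal T_h}\lesssim\norm{\varepsilon_h^{\mathbb G}}_{\mathcal T_h}+h^{-1/2}\norm{\bm P_M\varepsilon_h^{\bm z}-\varepsilon_h^{\widehat{\bm z}}}_{\partial\mathcal T_h}$.

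Then I would bound the three terms on the right and absorb. For the first term, Cauchy--Schwarz, the trace estimate $\norm{\delta^{\mathbb G}}_{\partial\mathcal T_h}\lesssim h^{s_{\mathbb G}-1/2}\norm{\mathbb G}_{s_{\mathbb G},\Omega}$, the dual Lemma~\ref{firstlemmastep1} bound for $\norm{\varepsilon_h^{\bm z}-\varepsilon_h^{\widehat{\bm z}}}_{\partial\mathcal T_h}$, and Young's inequality leave, after absorption of a multiple of $\norm{\varepsilon_h^{\mathbb G}}_{\mathcal T_h}^2+h^{-1}\norm{\bm P_M\varepsilon_h^{\bm z}-\varepsilon_h^{\widehat{\bm z}}}_{\partial\mathcal T_h}^2$ on the left, a contribution $h^{2s_{\mathbb G}}\norm{\mathbb G}_{s_{\mathbb G},\Omega}^2$. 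For the second, Cauchy--Schwarz, $\norm{\delta^{\bm z}}_{\partial\mathcal T_h}\lesssim h^{s_{\bm z}-1/2}\norm{\bm z}_{s_{\bm z},\Omega}$ and Young leave $h^{2s_{\bm z}-2}\norm{\bm z}_{s_{\bm z},\Omega}^2$. For the coupling term I would use Cauchy--Schwarz, the bound for $\norm{\varepsilon_h^{\bm z}}_{\mathcal T_h}$ above, Young, and then Lemma~\ref{lemma:step4_conv_rates} to estimate $\norm{\bm y-\bm y_h(u)}_{\mathcal T_h}$, which contributes $h^{2s_{\mathbb L}+2}\norm{\mathbb L}_{s_{\mathbb L},\Omega}^2+h^{2s_{\bm y}}\norm{\bm y}_{s_{\bm y},\Omega}^2$. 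Collecting all contributions, absorbing, and taking square roots yields \eqref{G_error}; estimate \eqref{z_error} is then immediate from $\norm{\varepsilon_h^{\bm z}}_{\mathcal T_h}\lesssim\norm{\varepsilon_h^{\mathbb G}}_{\mathcal T_h}+h^{-1/2}\norm{\bm P_M\varepsilon_h^{\bm z}-\varepsilon_h^{\widehat{\bm z}}}_{\partial\mathcal T_h}$ and \eqref{G_error}.

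I expect the main obstacle to be the coupling term $(\bm y-\bm y_h(u),\varepsilon_h^{\bm z})_{\mathcal T_h}$: one cannot bound $\norm{\varepsilon_h^{\bm z}}_{\mathcal T_h}$ on its own, so it must first be re-expressed through $\norm{\varepsilon_h^{\mathbb G}}_{\mathcal T_h}$ and the boundary jump (via the dual analogue of Lemma~\ref{firstlemmastep1} and Lemma~\ref{lemma:discr_Poincare_ineq}) so that the resulting quantity can be absorbed on the left, and one must invoke the \emph{pressure-free} estimate of Lemma~\ref{lemma:step4_conv_rates} for $\norm{\bm y-\bm y_h(u)}_{\mathcal T_h}$ rather than the cruder bound of \cite{GongStokes_Tangential1}; this is precisely what keeps \eqref{G_error} and \eqref{z_error} independent of $p$ and $q$. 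A minor secondary point is simply checking that the proof of \cite[Lemma~4.7]{GongStokes_Tangential1} transfers to the dual block, which it does because $\varepsilon_h^{\widehat{\bm z}}=0$ on $\mathcal E_h^\partial$.
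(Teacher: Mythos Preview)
Your proposal is correct and follows essentially the same approach as the paper: test the dual error equation \eqref{error_eq_Gzq} with $(\varepsilon_h^{\mathbb G},\varepsilon_h^{\bm z},-\varepsilon_h^q,-\varepsilon_h^{\widehat q},\varepsilon_h^{\widehat{\bm z}})$, use \eqref{pro_B2} together with $\varepsilon_h^{\widehat{\bm z}}=0$ on $\mathcal E_h^\partial$ to obtain the energy identity, invoke the dual analogue of Lemma~\ref{firstlemmastep1} and the discrete Poincar\'e inequality to control $\|\varepsilon_h^{\bm z}\|_{\mathcal T_h}$ by the left-hand quantity, and then estimate the three right-hand terms (with Lemma~\ref{lemma:step4_conv_rates} handling the coupling term) before absorbing via Young's inequality. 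The paper's proof is more terse but structurally identical.
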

	
	\begin{proof}
		First, we note the key inequality in  Lemma \ref{firstlemmastep1} is valid with $ (\mathbb{L},\bm y, \hat {\bm y}) $ in place of $ (\mathbb{G},\bm z,\hat{\bm z} ) $.  This gives
		\begin{align}\label{grad_z_h}
			\|\nabla \varepsilon_h^{\bm z}\|_{\mathcal{T}_h} +h^{-\frac{1}{2}}\|\varepsilon_h^{\bm z} -\varepsilon_h^{\widehat{\bm z}}\|_{\partial \mathcal{T}_h} {\lesssim} \|\varepsilon_h^{\mathbb{G}}\|_{\mathcal{T}_h} +h^{-\frac{1}{2}} \|\bm P_{ M} \varepsilon_h^{\bm z} -\varepsilon_h^{\widehat{\bm z}}\|_{\partial \mathcal{T}_h},
		\end{align}
		which we use below. Next, since $\varepsilon_h^{\widehat{\bm z}}=0$ on $\mathcal E_h^\partial$, the property of $\mathscr B$ in \eqref{pro_B2} gives
		\begin{align}
			\mathscr B(\varepsilon_h^{\mathbb{G}},\varepsilon_h^{\bm z},-\varepsilon_h^q,-\varepsilon_h^{\widehat q}, \varepsilon_h^{\widehat{\bm z}};\varepsilon_h^{\mathbb{G}},\varepsilon_h^{\bm z},-\varepsilon_h^q,-\varepsilon_h^{\widehat q}, \varepsilon_h^{\widehat{\bm z}})=\|\varepsilon_h^{\mathbb{G}}\|^2_{\mathcal T_h}+h^{-1}\|\bm P_{ M} \varepsilon_h^{\bm z}-\varepsilon_h^{\widehat{\bm z}}\|^2_{\partial\mathcal T_h}.
		\end{align}
		Next, we  take $(\mathbb T_2,\bm v_2,w_2,\widehat w_2, \bm \mu_2)=(\varepsilon_h^{\mathbb{G}},\varepsilon_h^{\bm z},-\varepsilon_h^q,-\varepsilon_h^{\widehat q}, \varepsilon_h^{\widehat{\bm z}})$ in \eqref{error_eq_Gzq} gives
		\begin{align*}
			\hspace{1em}&\hspace{-1em} \mathscr B(\varepsilon_h^{\mathbb{G}},\varepsilon_h^{\bm z},-\varepsilon_h^q,-\varepsilon_h^{\widehat q}, \varepsilon_h^{\widehat{\bm z}};\varepsilon_h^{\mathbb{G}},\varepsilon_h^{\bm z},-\varepsilon_h^q,-\varepsilon_h^{\widehat q}, \varepsilon_h^{\widehat{\bm z}}) \\
			& =  -\langle \delta^{\bm z}, \bm P_M \varepsilon_h^{\bm z}-\varepsilon_h^{\widehat{\bm z}}\rangle_{\partial\mathcal{T}_h} + \langle  \delta^{\mathbb G}\bm n, \varepsilon_h^{\bm z}-\varepsilon_h^{\widehat{\bm z}}\rangle_{\partial\mathcal{T}_h}+(\bm y-\bm y_h( u),\varepsilon_h^{\bm z})_{\mathcal T_h}.
		\end{align*}
		The estimate in \eqref{grad_z_h},  Lemmas \ref{lemma:step4_conv_rates} and \ref{lemma:discr_Poincare_ineq} and Young's inequality give the desired result.
	\end{proof}
	
	As a consequence of Lemma \ref{lemma:step6_main_lemma} and a simple application of the triangle inequality {we obtain the} optimal convergence rates for $\|\mathbb G- \mathbb G_h( u)\|_{\mathcal T_h}$ and $\| \bm z - \bm z_h( u)\|_{\mathcal T_h}$:
	\begin{lemma}\label{lemma:step7_conv_rates}
		Let $(\mathbb G, \bm z, q)$ and $(\mathbb G_h(u), \bm z_h(u), p_h(u))$ be the {solution} of \eqref{optimality_system} and \eqref{HDG_u_b}, respectively. We have
		\begin{subequations}
			\begin{align}
				\|\mathbb{G} -\mathbb{G}_h( u)\|_{\mathcal{T}_h}&\lesssim h^{s_{\mathbb L}+1}\norm{\mathbb L}_{s_{\mathbb L},\Omega}+ h^{s_{\bm y}}\norm{\bm y}_{s_{\bm y},\Omega} + h^{s_{\mathbb G}}\norm{\mathbb G}_{s_{\mathbb G},\Omega},\\
				\|\bm z-\bm z_h( u)\|_{\mathcal{T}_h}&\lesssim h^{s_{\mathbb L}+1}\norm{\mathbb L}_{s_{\mathbb L},\Omega}+ h^{s_{\bm y}}\norm{\bm y}_{s_{\bm y},\Omega} + h^{s_{\mathbb G}}\norm{\mathbb G}_{s_{\mathbb G},\Omega}+ h^{s_{\bm z}}\norm{\bm z}_{s_{\bm z},\Omega}.
			\end{align}
		\end{subequations}
	\end{lemma}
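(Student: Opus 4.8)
The plan is to deduce both estimates directly from Lemma~\ref{lemma:step6_main_lemma} and the triangle inequality, exactly as Lemma~\ref{lemma:step4_conv_rates} was deduced from Lemmas~\ref{error_energy_norm} and \ref{error_y_lyp}. Using the decompositions $\mathbb G - \mathbb G_h(u) = \delta^{\mathbb G} + \varepsilon_h^{\mathbb G}$ and $\bm z - \bm z_h(u) = \delta^{\bm z} + \varepsilon_h^{\bm z}$ of \eqref{notation_3}, the triangle inequality gives
\[
\norm{\mathbb G - \mathbb G_h(u)}_{\mathcal T_h} \le \norm{\delta^{\mathbb G}}_{\mathcal T_h} + \norm{\varepsilon_h^{\mathbb G}}_{\mathcal T_h},
\qquad
\norm{\bm z - \bm z_h(u)}_{\mathcal T_h} \le \norm{\delta^{\bm z}}_{\mathcal T_h} + \norm{\varepsilon_h^{\bm z}}_{\mathcal T_h}.
\]

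First I would bound the projection terms using the approximation estimates recalled in Section~\ref{sec:Projectionoperator}, which (as stated there) hold for $\mathbb G$, $\bm z$, $q$ just as for $\mathbb L$, $\bm y$, $p$: the $L^2$-projection estimate gives $\norm{\delta^{\mathbb G}}_{\mathcal T_h} \lesssim h^{s_{\mathbb G}}\norm{\mathbb G}_{s_{\mathbb G},\Omega}$ and the Raviart--Thomas estimate of \cite[Theorem~3.1]{Duran2008} gives $\norm{\delta^{\bm z}}_{\mathcal T_h} \lesssim h^{s_{\bm z}}\norm{\bm z}_{s_{\bm z},\Omega}$. Then $\norm{\varepsilon_h^{\mathbb G}}_{\mathcal T_h}$ and $\norm{\varepsilon_h^{\bm z}}_{\mathcal T_h}$ are controlled by \eqref{G_error} and \eqref{z_error} of Lemma~\ref{lemma:step6_main_lemma}. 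Summing, the only term not already of the advertised form is $h^{s_{\bm z}-1}\norm{\bm z}_{s_{\bm z},\Omega}$ coming from \eqref{G_error}--\eqref{z_error}; I would absorb it into $h^{s_{\mathbb G}}\norm{\mathbb G}_{s_{\mathbb G},\Omega}$ by writing $\mathbb G = \nabla\bm z$, applying Poincar\'e's inequality to $\bm z$ (which vanishes on $\Gamma$), and using that the definitions of $s_{\bm z}$ and $s_{\mathbb G}$ give $s_{\bm z}-1 = s_{\mathbb G}$, so that $h^{s_{\bm z}-1}\norm{\bm z}_{s_{\bm z},\Omega} = h^{s_{\mathbb G}}\norm{\bm z}_{s_{\mathbb G}+1,\Omega} \lesssim h^{s_{\mathbb G}}\norm{\mathbb G}_{s_{\mathbb G},\Omega}$. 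Collecting the remaining terms yields the two asserted bounds.

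There is no real obstacle here, since the substantive work is already done in Lemma~\ref{lemma:step6_main_lemma}; the one place that requires a little attention is the exponent bookkeeping in the absorption step above, where one must keep track of the definitions $s_{\mathbb L}=\min\{r_{\mathbb L},k+1\}$, $s_{\mathbb G}=\min\{r_{\mathbb G},k+1\}$, $s_{\bm y}=\min\{r_{\bm y},k+2\}$, $s_{\bm z}=\min\{r_{\bm z},k+2\}$ together with the regularity assumptions \eqref{reg_assumption} to be sure that no exponent on the right-hand side is decreased.
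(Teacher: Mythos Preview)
Your approach matches the paper's exactly: the paper simply says the lemma is ``a consequence of Lemma~\ref{lemma:step6_main_lemma} and a simple application of the triangle inequality'', and you carry this out, even supplying an absorption argument for the $h^{s_{\bm z}-1}\norm{\bm z}_{s_{\bm z},\Omega}$ term that the paper leaves implicit. One small caveat on that absorption: the identity $s_{\bm z}-1=s_{\mathbb G}$ does not follow from the definitions of $s_{\bm z},s_{\mathbb G}$ alone (since $r_{\mathbb G}$ and $r_{\bm z}$ are formally independent in \eqref{reg_assumption}), but it does hold once you use the natural relation $r_{\mathbb G}=r_{\bm z}-1$, which is legitimate here because $\mathbb G=\nabla\bm z$.
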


	\subsubsection*{Step 6: Estimates for $\|u-u_h\|_{\mathcal E_h^\partial}$ and $\norm {y-y_h}_{\mathcal T_h}$}
	
	Next, we bound the error between the solutions of the auxiliary problem and the HDG problem \eqref{HDG_full_discrete}.  We use these error bounds and the error bounds in  Lemmas \ref{lemma:step4_conv_rates} and \ref{lemma:step7_conv_rates} to obtain the main results.
	For the next step, we denote
	\begin{align*}
		\zeta_{\mathbb L}&=\mathbb L_h( u)-\mathbb L_h,\quad\zeta_{\bm y}=\bm y_h( u)-\bm y_h,\quad\zeta_p=p_h( u)- p_h, \quad \zeta_{\widehat p}=\widehat p_h( u)- \widehat p_h,\\
		\zeta_{\mathbb G}&=\mathbb G_h( u)-\mathbb G_h,\quad\zeta_{\bm z}=\bm z_h( u)-\bm z_h,\quad\zeta_q=q_h( u)- q_h, \quad \zeta_{\widehat q}=\widehat q_h( u)- \widehat q_h,
	\end{align*}
	and
	\begin{align*}
		\zeta_{\widehat {\bm y}}&=\widehat{\bm y}^o_h( u)-\widehat{\bm y}_h^o \;\; \textup{on} \;\; \varepsilon_h^{o} \;\;\textup{and} \;\;\zeta_{\widehat {\bm y}} =  P_M u \bm\tau -  u_h\bm \tau \;\; \textup{on} \;\; \mathcal E_h^{\partial} ,\\
		\zeta_{\widehat {\bm z}}&=\widehat{\bm z}^o_h( u)-\widehat{\bm z}_h^o \;\; \textup{on} \;\; \varepsilon_h^{o} \;\;\textup{and}  \;\;\zeta_{\widehat {\bm z}} = 0\;\; \textup{on} \;\; \mathcal E_h^{\partial}.
	\end{align*}
	Subtracting the auxiliary problem and the HDG problem gives the following error equations
	\begin{subequations}\label{eq_yh}
		\begin{align}
			\mathscr B(\zeta_{\mathbb L},\zeta_{\bm y},\zeta_p,\zeta_{\widehat p},\zeta_{\widehat{\bm y}};\mathbb T_1,\bm v_1, w_1,\widehat w_1,\bm \mu_1)&=\langle  (P_M  u - u_h)\bm\tau, h^{-1} \bm v_1+\mathbb T_1\bm n \rangle_{\mathcal E_h^\partial},\label{eq_yh1}\\
			\mathscr B(\zeta_{\mathbb G},\zeta_{\bm z},-\zeta_q,-\zeta_{\widehat q},\zeta_{\widehat{\bm z}};\mathbb T_2,\bm v_2, w_2,\widehat w_2,\bm \mu_2)&=(\zeta_{\bm y},\bm v_2)_{\mathcal{T}_h}\label{eq_yh2}
		\end{align}
	\end{subequations}
	for all $(\mathbb T_1,\bm v_1,w_1,\widehat{w}_1,\bm \mu_1;\mathbb T_2,\bm v_2,w_2,\widehat{w}_2,\bm \mu_2) \in [\mathbb K_h\times\bm{V}_h\times W_h^0\times Q_h\times  \bm M_h(o)]^2$.
	
	\begin{lemma}
		We have
		\begin{align}\label{eq_uuh_yhuyh}
			\begin{split}
				\gamma\| u-  u_h\|^2_{\mathcal E_h^\partial}+\|\zeta_{\bm y}\|^2_{\mathcal T_h} &= \langle \gamma  u\bm{\tau}-\mathbb G_h( u) \bm n+h^{-1}  \bm P_M\bm z_h( u), (u-  u_h)\bm{\tau}\rangle_{\mathcal E_h^\partial} \\
				& \quad- \langle \gamma  u_h \bm \tau- \mathbb G_h\bm n +h^{-1} \bm P_M\bm z_h,  (u-  u_h)\bm{\tau} \rangle_{\mathcal E_h^\partial}.
			\end{split}
		\end{align}
	\end{lemma}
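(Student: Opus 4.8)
The plan is to test the two error equations \eqref{eq_yh1}--\eqref{eq_yh2} against each other's unknowns, exploit the quasi-skew-symmetry of $\mathscr B$ recorded in Lemma \ref{identical_equal} to cancel all flux and pressure contributions, and then rewrite the surviving boundary pairing on $\mathcal E_h^\partial$ by means of the projection identities.

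First I would take $(\mathbb T_1,\bm v_1,w_1,\widehat w_1,\bm\mu_1)=(-\zeta_{\mathbb G},\zeta_{\bm z},\zeta_q,\zeta_{\widehat q},\zeta_{\widehat{\bm z}})$ in \eqref{eq_yh1} and $(\mathbb T_2,\bm v_2,w_2,\widehat w_2,\bm\mu_2)=(-\zeta_{\mathbb L},\zeta_{\bm y},\zeta_p,\zeta_{\widehat p},\zeta_{\widehat{\bm y}})$ in \eqref{eq_yh2}. These are admissible: each $\zeta_\bullet$ lies in the corresponding discrete space by construction (in particular $\zeta_p,\zeta_q\in W_h^0$, $\zeta_{\widehat p},\zeta_{\widehat q}\in Q_h$, and the interior-edge parts of $\zeta_{\widehat{\bm y}},\zeta_{\widehat{\bm z}}$ lie in $\bm M_h(o)$). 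By Lemma \ref{identical_equal},
\[
\mathscr B(\zeta_{\mathbb L},\zeta_{\bm y},\zeta_p,\zeta_{\widehat p},\zeta_{\widehat{\bm y}};-\zeta_{\mathbb G},\zeta_{\bm z},\zeta_q,\zeta_{\widehat q},\zeta_{\widehat{\bm z}})
=\mathscr B(\zeta_{\mathbb G},\zeta_{\bm z},-\zeta_q,-\zeta_{\widehat q},\zeta_{\widehat{\bm z}};-\zeta_{\mathbb L},\zeta_{\bm y},\zeta_p,\zeta_{\widehat p},\zeta_{\widehat{\bm y}}),
\]
which is legitimate since $\zeta_{\bm z}=\bm z_h(u)-\bm z_h$ is globally divergence free: $\bm z_h$ is by Proposition \ref{prof_divergence_free}, and the very same argument applied to \eqref{HDG_u_b} (choosing $\mathbb T_2=\bm v_2=\bm\mu_2=0$ and testing the divergence components against $W_h^0$ and $Q_h$) shows $\bm z_h(u)$ is divergence free as well. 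The left-hand side above equals, by \eqref{eq_yh1}, $\langle (P_M u-u_h)\bm\tau,\,h^{-1}\zeta_{\bm z}-\zeta_{\mathbb G}\bm n\rangle_{\mathcal E_h^\partial}$, while the right-hand side equals, by \eqref{eq_yh2}, $(\zeta_{\bm y},\zeta_{\bm y})_{\mathcal T_h}=\|\zeta_{\bm y}\|_{\mathcal T_h}^2$. Hence $\|\zeta_{\bm y}\|_{\mathcal T_h}^2=\langle (P_M u-u_h)\bm\tau,\ h^{-1}\zeta_{\bm z}-\zeta_{\mathbb G}\bm n\rangle_{\mathcal E_h^\partial}$.

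Next I would clean up this boundary pairing. Because the meshes are triangular, $\bm\tau$ is constant on every edge, so $(P_M u-u_h)\bm\tau\in\bm M_h$ on $\mathcal E_h^\partial$; self-adjointness of $\bm P_M$ then lets me replace $\zeta_{\bm z}$ by $\bm P_M\zeta_{\bm z}$ in the first term. Moreover $\bm P_M\zeta_{\bm z}\cdot\bm\tau$ and $\zeta_{\mathbb G}\bm n\cdot\bm\tau$ are both polynomials of degree at most $k$ on each boundary edge, so the defining property of $P_M$ permits replacing $P_M u$ by $u$ in both terms. This gives $\|\zeta_{\bm y}\|_{\mathcal T_h}^2=\langle (u-u_h)\bm\tau,\ h^{-1}\bm P_M\zeta_{\bm z}-\zeta_{\mathbb G}\bm n\rangle_{\mathcal E_h^\partial}$. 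Adding $\gamma\|u-u_h\|_{\mathcal E_h^\partial}^2=\gamma\langle (u-u_h)\bm\tau,(u-u_h)\bm\tau\rangle_{\mathcal E_h^\partial}$ to both sides, factoring out $(u-u_h)\bm\tau$, and splitting
$\gamma(u-u_h)\bm\tau+h^{-1}\bm P_M\zeta_{\bm z}-\zeta_{\mathbb G}\bm n$
into its ``$(u)$'' and its ``$h$'' parts via $\zeta_{\bm z}=\bm z_h(u)-\bm z_h$ and $\zeta_{\mathbb G}=\mathbb G_h(u)-\mathbb G_h$, yields exactly \eqref{eq_uuh_yhuyh}.

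I do not expect a single hard step here; the difficulty is entirely in the bookkeeping of the many unknowns. The one genuine subtlety is that Lemma \ref{identical_equal} is only available because $\zeta_{\bm z}$ is discretely divergence free, which forces the preliminary observation that the auxiliary velocity $\bm z_h(u)$ inherits the divergence-free property of Proposition \ref{prof_divergence_free}; and the reductions $P_M u\mapsto u$ and $\zeta_{\bm z}\mapsto\bm P_M\zeta_{\bm z}$ are precisely where the triangular-mesh hypothesis (constancy of $\bm\tau$ on each edge) is used, which is what keeps the identity, and hence the resulting estimate, free of the pressures.
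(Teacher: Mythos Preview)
Your proposal is correct and follows essentially the same route as the paper: test \eqref{eq_yh1}--\eqref{eq_yh2} against each other's unknowns, invoke Lemma~\ref{identical_equal} to obtain $\|\zeta_{\bm y}\|_{\mathcal T_h}^2=\langle (u-u_h)\bm\tau,\,-\zeta_{\mathbb G}\bm n+h^{-1}\bm P_M\zeta_{\bm z}\rangle_{\mathcal E_h^\partial}$, and then add the trivial identity for $\gamma\|u-u_h\|_{\mathcal E_h^\partial}^2$ and regroup. Your explicit remarks that Lemma~\ref{identical_equal} requires $\zeta_{\bm z}$ to be globally divergence free (hence also $\bm z_h(u)$), and that the projection swaps $P_M u\leftrightarrow u$ and $\zeta_{\bm z}\leftrightarrow\bm P_M\zeta_{\bm z}$ use constancy of $\bm\tau$ on each straight edge, are accurate and are precisely the points the paper uses implicitly.
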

	\begin{proof}
		First, we have
		\begin{align*}
			&\quad\langle \gamma  u\bm \tau-\mathbb G_h( u) \bm n+h^{-1}  \bm P_M\bm z_h( u), (u- u_h)\bm{\tau}\rangle_{\mathcal E_h^\partial}\\
			&- \langle \gamma  u_h \bm \tau- \mathbb G_h\bm n +h^{-1} \bm P_M\bm z_h, ( u- u_h)\bm{\tau}\rangle_{\mathcal E_h^\partial}\\
			&= \gamma\norm{ u- u_h}_{\mathcal E_h^\partial}^2 +\langle -\zeta_{\mathbb G} \bm n+ h^{-1}  \bm P_M \zeta_{\bm z}, (u-  u_h)\bm{\tau}\rangle_{\mathcal E_h^\partial}.
		\end{align*}
		Next, Lemma \ref{identical_equal} gives
		\begin{align*}
			\mathscr B(\zeta_{\mathbb L},\zeta_{\bm y},\zeta_{p},\zeta_{\widehat p},  \zeta_{\widehat{\bm y}}; - \zeta_{\mathbb G},\zeta_{\bm z},\zeta_{q}, \zeta_{\widehat q},\zeta_{\widehat{\bm z}}) =\mathscr B ( \zeta_{\mathbb G},\zeta_{\bm z},-\zeta_{q}, -\zeta_{\widehat q},\zeta_{\widehat{\bm z}}; -\zeta_{\mathbb L},\zeta_{\bm y},\zeta_{p},\zeta_{\widehat p}, \zeta_{\widehat{\bm y}}).
		\end{align*}
		On the other hand, from (\ref{eq_yh1}) and (\ref{eq_yh2}) we have
		\begin{align*}
			\hspace{1em}&\hspace{-1em}  \mathscr B(\zeta_{\mathbb L},\zeta_{\bm y},\zeta_{p},\zeta_{\widehat p},  \zeta_{\widehat{\bm y}}; - \zeta_{\mathbb G},\zeta_{\bm z},\zeta_{q}, \zeta_{\widehat q},\zeta_{\widehat{\bm z}}) -\mathscr B ( \zeta_{\mathbb G},\zeta_{\bm z},-\zeta_{q}, -\zeta_{\widehat q},\zeta_{\widehat{\bm z}}; -\zeta_{\mathbb L},\zeta_{\bm y},\zeta_{p},\zeta_{\widehat p}, \zeta_{\widehat{\bm y}})\\
			&=  -(\zeta_{ \bm y},\zeta_{ \bm y})_{\mathcal{T}_h} + \langle  P_M ( u- u_h) \bm \tau, -\zeta_{\mathbb G} \bm{n} + h^{-1} \zeta_{ \bm z}  \rangle_{{\mathcal E_h^{\partial}}}\\
			&= -(\zeta_{\bm y},\zeta_{ \bm y})_{\mathcal{T}_h} + \langle   (u- u_h) \bm \tau, -\zeta_{\mathbb G} \bm{n} + h^{-1} \bm P_M \zeta_{ \bm z}  \rangle_{{\mathcal E_h^{\partial}}}.
		\end{align*}
		
		Comparing the above two equalities gives
		\begin{align*}
			(\zeta_{\bm y},\zeta_{ \bm y})_{\mathcal{T}_h} =  \langle   (u- u_h) \bm \tau, -\zeta_{\mathbb G} \bm{n} + h^{-1} \bm P_M \zeta_{ \bm z}  \rangle_{{\mathcal E_h^{\partial}}}.
		\end{align*}
	\end{proof}
	
	\begin{theorem}
		Let $(\bm y, u)$ and $(\bm y_h, u_h)$ be the {solutions} of \eqref{optimality_system} and \eqref{HDG_full_discrete}, respectively. We have
		\begin{subequations}
			\begin{align}
				\norm{ u-  u_h}_{\mathcal E_h^\partial}&\lesssim h^{s_{\mathbb L}+\frac 1 2}\norm{\mathbb L}_{s_{\mathbb L},\Omega}  +  h^{s_{\bm y}-\frac 1 2}\norm{\bm y}_{s_{\bm y},\Omega} + h^{s_{\mathbb G}-\frac 1 2}\norm{\mathbb G}_{s_{\mathbb G},\Omega}{+h^{s_{\bm z}-\frac 3 2}\norm{\bm z}_{s_{\bm z},\Omega}},\\
				\norm {\bm y-\bm y_h}_{\mathcal T_h} &\lesssim h^{s_{\mathbb L}+\frac 1 2}\norm{\mathbb L}_{s_{\mathbb L},\Omega}  +  h^{s_{\bm y}-\frac 1 2}\norm{\bm y}_{s_{\bm y},\Omega} + h^{s_{\mathbb G}-\frac 1 2}\norm{\mathbb G}_{s_{\mathbb G},\Omega}{+h^{s_{\bm z}-\frac 3 2}\norm{\bm z}_{s_{\bm z},\Omega}}.
			\end{align}
		\end{subequations}
	\end{theorem}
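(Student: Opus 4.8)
The plan is to combine the identity \eqref{eq_uuh_yhuyh} with the estimates already established for the auxiliary problem. Set, for brevity,
\[
\Lambda := h^{s_{\mathbb L}+\frac12}\norm{\mathbb L}_{s_{\mathbb L},\Omega}+h^{s_{\bm y}-\frac12}\norm{\bm y}_{s_{\bm y},\Omega}+h^{s_{\mathbb G}-\frac12}\norm{\mathbb G}_{s_{\mathbb G},\Omega}+h^{s_{\bm z}-\frac32}\norm{\bm z}_{s_{\bm z},\Omega}.
\]
First I would show that the right-hand side of \eqref{eq_uuh_yhuyh} is bounded by $\Lambda\,\norm{u-u_h}_{\mathcal E_h^\partial}$; Young's inequality then absorbs $\norm{u-u_h}_{\mathcal E_h^\partial}$ into the left side and yields both $\norm{u-u_h}_{\mathcal E_h^\partial}\lesssim\Lambda$ and $\norm{\zeta_{\bm y}}_{\mathcal T_h}\lesssim\Lambda$. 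The bound for $\norm{\bm y-\bm y_h}_{\mathcal T_h}$ then follows from $\norm{\bm y-\bm y_h}_{\mathcal T_h}\le\norm{\bm y-\bm y_h(u)}_{\mathcal T_h}+\norm{\zeta_{\bm y}}_{\mathcal T_h}$ together with Lemma \ref{lemma:step4_conv_rates}, whose right-hand side $h^{s_{\mathbb L}+1}\norm{\mathbb L}_{s_{\mathbb L},\Omega}+h^{s_{\bm y}}\norm{\bm y}_{s_{\bm y},\Omega}$ is dominated by $\Lambda$.

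For the right-hand side of \eqref{eq_uuh_yhuyh} I would argue first that the term $\langle\gamma u_h\bm\tau-\mathbb G_h\bm n+h^{-1}\bm P_M\bm z_h,(u-u_h)\bm\tau\rangle_{\mathcal E_h^\partial}$ vanishes: on each (straight) boundary edge $e$ the vectors $\bm n,\bm\tau$ are constant and the traces of $\mathbb G_h\in\mathbb K_h$ and of the components of $\bm P_M\bm z_h\in\bm M_h$ are polynomials of degree at most $k$, so $(\gamma u_h\bm\tau-\mathbb G_h\bm n+h^{-1}\bm P_M\bm z_h)\cdot\bm\tau\in\mathcal P^k(e)=M_h|_e$; writing $u-u_h=(u-P_M u)+(P_M u-u_h)$, the pairing against $(u-P_M u)\bm\tau$ vanishes by the definition of $P_M$, and the pairing against $(P_M u-u_h)\bm\tau$ vanishes by the discrete optimality condition \eqref{HDG_full_discrete_e} with $\mu_3=P_M u-u_h$. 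For the remaining term I would use $\bm z=0$ on $\Gamma$ (hence $\bm P_M\bm z=0$ on $\mathcal E_h^\partial$) together with \eqref{TCOOC} to see that $\langle\gamma u\bm\tau-\mathbb G\bm n+h^{-1}\bm P_M\bm z,(u-u_h)\bm\tau\rangle_{\mathcal E_h^\partial}=0$; subtracting this from \eqref{eq_uuh_yhuyh} gives
\[
\gamma\norm{u-u_h}_{\mathcal E_h^\partial}^2+\norm{\zeta_{\bm y}}_{\mathcal T_h}^2=\langle(\mathbb G-\mathbb G_h(u))\bm n+h^{-1}\bm P_M(\bm z_h(u)-\bm z),(u-u_h)\bm\tau\rangle_{\mathcal E_h^\partial}.
\]

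Next I would split $\mathbb G-\mathbb G_h(u)=\delta^{\mathbb G}+\varepsilon_h^{\mathbb G}$ and $\bm z_h(u)-\bm z=-(\delta^{\bm z}+\varepsilon_h^{\bm z})$ and bound the four resulting boundary pairings by Cauchy--Schwarz on $\mathcal E_h^\partial$, using $\norm{(u-u_h)\bm\tau}_{\mathcal E_h^\partial}=\norm{u-u_h}_{\mathcal E_h^\partial}$. The $\delta^{\mathbb G}$ and $h^{-1}\delta^{\bm z}$ pairings are controlled by the projection bounds $\norm{\delta^{\mathbb G}}_{\partial\mathcal T_h}\lesssim h^{s_{\mathbb G}-1/2}\norm{\mathbb G}_{s_{\mathbb G},\Omega}$ and $\norm{\delta^{\bm z}}_{\partial\mathcal T_h}\lesssim h^{s_{\bm z}-1/2}\norm{\bm z}_{s_{\bm z},\Omega}$, the latter producing the $h^{s_{\bm z}-3/2}$ contribution after the factor $h^{-1}$. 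For $\varepsilon_h^{\mathbb G}$ I would use the inverse/trace inequality $\norm{\varepsilon_h^{\mathbb G}}_{\mathcal E_h^\partial}\lesssim h^{-1/2}\norm{\varepsilon_h^{\mathbb G}}_{\mathcal T_h}$ and then \eqref{G_error}. For $h^{-1}\bm P_M\varepsilon_h^{\bm z}$ I would use that $\varepsilon_h^{\widehat{\bm z}}=0$ on $\mathcal E_h^\partial$ (since $\bm P_M\bm z=0$ there and $\widehat{\bm z}_h(u)$ is supported on interior edges), so that $\bm P_M\varepsilon_h^{\bm z}=\bm P_M\varepsilon_h^{\bm z}-\varepsilon_h^{\widehat{\bm z}}$ on $\mathcal E_h^\partial$ and $h^{-1}\norm{\bm P_M\varepsilon_h^{\bm z}-\varepsilon_h^{\widehat{\bm z}}}_{\mathcal E_h^\partial}\lesssim\Lambda$ by \eqref{G_error}. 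Each of the four pieces is then $\lesssim\Lambda\,\norm{u-u_h}_{\mathcal E_h^\partial}$, which closes the argument.

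The main obstacle is the bookkeeping around the numerical-trace penalty terms: the factor $h^{-1}$ multiplying $\bm P_M(\bm z_h(u)-\bm z)$ costs half a power of $h$ relative to the interior estimates, which produces the dominant $h^{s_{\bm z}-3/2}\norm{\bm z}_{s_{\bm z},\Omega}$ term; one must therefore be careful to invoke \eqref{G_error} (which in turn relies on the identity $\varepsilon_h^{\widehat{\bm z}}=0$ on $\mathcal E_h^\partial$) rather than a cruder bound, and to get the polynomial-degree count right so that the discrete-optimality pairing genuinely vanishes.
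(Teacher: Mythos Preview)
Your proposal is correct and follows essentially the same route as the paper's proof: both start from \eqref{eq_uuh_yhuyh}, eliminate the discrete term via \eqref{HDG_full_discrete_e} and the continuous term via \eqref{TCOOC} together with $\bm z=0$ on $\Gamma$, and then control the remaining boundary pairing $\langle(\mathbb G-\mathbb G_h(u))\bm n+h^{-1}\bm P_M\bm z_h(u),(u-u_h)\bm\tau\rangle_{\mathcal E_h^\partial}$ through the splitting into $\delta$- and $\varepsilon$-parts, the inverse trace estimate on $\varepsilon_h^{\mathbb G}$, and \eqref{G_error} (using $\varepsilon_h^{\widehat{\bm z}}=0$ on $\mathcal E_h^\partial$). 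Your justification that the discrete term vanishes---via the degree count $(\gamma u_h\bm\tau-\mathbb G_h\bm n+h^{-1}\bm P_M\bm z_h)\cdot\bm\tau\in\mathcal P^k(e)$ and the decomposition $u-u_h=(u-P_Mu)+(P_Mu-u_h)$---is in fact more explicit than the paper, which simply asserts that the tangential component is zero on $\mathcal E_h^\partial$.
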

	
	\begin{proof}
		Since $\gamma u\bm \tau-\mathbb G\bm n = 0$ on $\mathcal E_h^\partial$ and $\gamma  u_h \bm \tau- \mathbb G_h \bm n + h^{-1} \bm P_M \bm z_h = 0$ on $\mathcal E_h^\partial$ we have
		%	
		%	
		%	 $\bm u_h\in \bm M_h^0(\partial)\subset \bm L_0^2(\Gamma)$, we have $\langle \gamma \bm u-\mathbb G\bm n -q\bm n, \bm u-\bm u_h\rangle_{\mathcal E_h^\partial}=0$.
		%	Moreover
		%	\begin{align*}
		%	\hspace{3em}&\hspace{-3em} \langle	\gamma \bm u_h - \mathbb G_h \bm n -q_h \bm n + h^{-1} \bm P_M \bm z_h, \bm u-\bm u_h\rangle_{\mathcal E_h^\partial} \\
		%	& = 	\langle	\gamma \bm u_h - \mathbb G_h \bm n -q_h \bm n + h^{-1} \bm P_M \bm z_h, \bm P_M\bm u-\bm u_h\rangle_{\mathcal E_h^\partial}\\
		%	&=0.
		%	\end{align*}
		%Here we used $\gamma \bm u_h - \mathbb G_h \bm n -q_h \bm n + h^{-1} \bm P_M \bm z_h \in \bm M_h$ and $\bm P_M \bm u \in \bm M_h^0(\partial)$.
		%
		%This implies
		%
		\begin{align*}
			\gamma\norm{ u- u_h}_{\mathcal E_h^\partial}^2  + \norm {\zeta_{\bm y}}_{\mathcal T_h}^2 &=  \langle \gamma  u \bm \tau-\mathbb G_h( u) \bm n+h^{-1}  \bm P_M\bm z_h( u), (u-  u_h)\bm \tau\rangle_{\mathcal E_h^\partial}\\
			&=\langle (\mathbb G -\mathbb G_h( u))\bm n + h^{-1} \bm P_M \bm z_h( u),  (u-  u_h)\bm \tau \rangle_{\mathcal E_h^\partial}.
		\end{align*}
		Next, since $\widehat {\bm z}_h( u) = \bm z = \bm 0$ on $\mathcal E_h^{\partial}$ we have
		\begin{align*}
			\|\bm P_M \bm z_h(u)\|_{\mathcal E_h^\partial} &={\|\bm P_M \bm z_h(u)  - \bm P_M \Pi^{\textup{RT}} \bm z +\bm P_M \Pi^{\textup{RT}} \bm z-\bm P_M \bm z+ \bm P_M \bm z- \widehat {\bm z}_h( u) \|_{\mathcal E_h^\partial}}\\
			& \leq  \|\bm P_M \varepsilon_h^{\bm z} -\varepsilon_h^{\widehat {\bm z}}\|_{\partial\mathcal T_h}+\|\Pi^{\textup{RT}} \bm z-\bm z\|_{\mathcal E_h^{\partial}}.
		\end{align*}
		This together with Lemma \ref{lemma:step7_conv_rates}  gives
		\begin{align*}
			\norm{ u- u_h}_{\mathcal E_h^\partial}  + \|\zeta_{\bm y}\|_{\mathcal T_h}&\lesssim h^{-\frac 1 2}\norm {\varepsilon_h^{\mathbb G}}_{\mathcal T_h} +h^{s_{\mathbb G}-\frac 12 } \norm{\mathbb G}_{s_{\mathbb G},\Omega}\\ &\quad +h^{-1}\|\bm P_M \varepsilon_h^{\bm z} -\varepsilon_h^{\widehat {\bm z}}\|_{\partial\mathcal T_h}{+h^{-1}\|\Pi^{\textup{RT}} \bm z-\bm z\|_{\mathcal E_h^{\partial}}}.
		\end{align*}
		By Lemma \ref{lemma:step6_main_lemma} and properties of the $ L^2 $ projection, we have
		\begin{align*}
			\norm{ u- u_h}_{\mathcal E_h^\partial}  + \|\zeta_{\bm y}\|_{\mathcal T_h}\lesssim h^{s_{\mathbb L}+\frac 1 2}\norm{\mathbb L}_{s_{\mathbb L},\Omega}  +  h^{s_{\bm y}-\frac 1 2}\norm{\bm y}_{s_{\bm y},\Omega}
			+ h^{s_{\mathbb G}-\frac 1 2}\norm{\mathbb G}_{s_{\mathbb G},\Omega}+  {h^{s_{\bm z}-\frac 3 2}\norm{\bm z}_{s_{\bm z},\Omega}}.
		\end{align*}
		Then, by the triangle inequality and Lemma \ref{lemma:step4_conv_rates} we obtain
		\begin{align*}
			\|\bm y -\bm y_h\|_{\mathcal T_h}  \lesssim h^{s_{\mathbb L}+\frac 1 2}\norm{\mathbb L}_{s_{\mathbb L},\Omega}  +  h^{s_{\bm y}-\frac 1 2}\norm{\bm y}_{s_{\bm y},\Omega}+ h^{s_{\mathbb G}-\frac 1 2}\norm{\mathbb G}_{s_{\mathbb G},\Omega}{+h^{s_{\bm z}-\frac 3 2}\norm{\bm z}_{s_{\bm z},\Omega}}.
		\end{align*}
	\end{proof}

	\subsubsection*{Step 7: Estimates for $\|\mathbb G-\mathbb G_h\|_{\mathcal T_h}$ and $\|z-z_h\|_{\mathcal T_h}$}
	\begin{lemma}
		We have
		\begin{subequations}
			\begin{align}
				\norm {\zeta_{\mathbb G}}_{\mathcal T_h}  &\lesssim h^{s_{\mathbb L}+\frac 1 2}\norm{\mathbb L}_{s_{\mathbb L},\Omega} + h^{s_{ p}+\frac 1 2}\norm{p}_{s^{p},\Omega} +  h^{s_{\bm y}-\frac 1 2}\norm{\bm y}_{s_{\bm y},\Omega}\nonumber\\
				&\quad + h^{s_{\mathbb G}-\frac 1 2}\norm{\mathbb G}_{s_{\mathbb G},\Omega} + h^{s_{ q}-\frac 1 2}\norm{q}_{s^{q},\Omega} + h^{s_{\bm z}-\frac 3 2}\norm{\bm z}_{s_{\bm z},\Omega},\\
				\|\zeta_{\bm z}\|_{\mathcal T_h} &\lesssim h^{s_{\mathbb L}+\frac 1 2}\norm{\mathbb L}_{s_{\mathbb L},\Omega} + h^{s_{ p}+\frac 1 2}\norm{p}_{s^{p},\Omega} +  h^{s_{\bm y}-\frac 1 2}\norm{\bm y}_{s_{\bm y},\Omega}\nonumber\\
				&\quad + h^{s_{\mathbb G}-\frac 1 2}\norm{\mathbb G}_{s_{\mathbb G},\Omega} + h^{s_{ q}-\frac 1 2}\norm{q}_{s^{q},\Omega} + h^{s_{\bm z}-\frac 3 2}\norm{\bm z}_{s_{\bm z},\Omega}.
			\end{align}
		\end{subequations}
	\end{lemma}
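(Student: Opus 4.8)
The plan is to mimic Steps~2 and~5: combine the energy identity for $\mathscr B$ with the $\zeta$-analogue of the key inequality of Lemma~\ref{firstlemmastep1} and the discrete Poincar\'e inequality of Lemma~\ref{lemma:discr_Poincare_ineq}, reducing the two quantities to be bounded to $\norm{\zeta_{\bm y}}_{\mathcal T_h}$, which was already estimated in Step~6.

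First I would test the error equation \eqref{eq_yh2} with $(\mathbb T_2,\bm v_2,w_2,\widehat w_2,\bm\mu_2)=(\zeta_{\mathbb G},\zeta_{\bm z},-\zeta_q,-\zeta_{\widehat q},\zeta_{\widehat{\bm z}})$. Because $\zeta_{\widehat{\bm z}}=0$ on $\mathcal E_h^\partial$, the identity \eqref{pro_B2} applies and its two boundary contributions merge, giving
\begin{align*}
	\norm{\zeta_{\mathbb G}}_{\mathcal T_h}^2+h^{-1}\norm{\bm P_M\zeta_{\bm z}-\zeta_{\widehat{\bm z}}}_{\partial\mathcal T_h}^2=(\zeta_{\bm y},\zeta_{\bm z})_{\mathcal T_h}\le\norm{\zeta_{\bm y}}_{\mathcal T_h}\norm{\zeta_{\bm z}}_{\mathcal T_h}.
\end{align*}
Next I would establish the $\zeta$-analogue of Lemma~\ref{firstlemmastep1}: testing \eqref{eq_yh2} with $(\nabla\zeta_{\bm z},\bm 0,0,0,\bm 0)$ --- admissible since $\nabla\zeta_{\bm z}|_K\in[\mathcal P^{k}(K)]^{2\times2}\subset\mathbb K_h$ --- then integrating by parts and invoking the inverse/trace estimates $\norm{\mathbb T\bm n}_{\partial\mathcal T_h}\lesssim h^{-1/2}\norm{\mathbb T}_{\mathcal T_h}$ and $\norm{\bm v-\bm P_M\bm v}_{\partial\mathcal T_h}\lesssim h^{1/2}\norm{\nabla\bm v}_{\mathcal T_h}$ for $\bm v\in\bm V_h$ to absorb terms, one obtains
\begin{align*}
	\norm{\nabla\zeta_{\bm z}}_{\mathcal T_h}+h^{-1/2}\norm{\zeta_{\bm z}-\zeta_{\widehat{\bm z}}}_{\partial\mathcal T_h}\lesssim\norm{\zeta_{\mathbb G}}_{\mathcal T_h}+h^{-1/2}\norm{\bm P_M\zeta_{\bm z}-\zeta_{\widehat{\bm z}}}_{\partial\mathcal T_h}.
\end{align*}
It is convenient here that $\zeta_{\widehat{\bm z}}$ carries no boundary datum --- unlike $\zeta_{\bm y}$ --- so that no terms on $\mathcal E_h^\partial$ survive in the integration by parts. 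Since $\zeta_{\widehat{\bm z}}|_{\mathcal E_h^\partial}=0$, Lemma~\ref{lemma:discr_Poincare_ineq} applies verbatim to $\zeta_{\bm z}$, and combined with the previous display it yields $\norm{\zeta_{\bm z}}_{\mathcal T_h}\lesssim\norm{\zeta_{\mathbb G}}_{\mathcal T_h}+h^{-1/2}\norm{\bm P_M\zeta_{\bm z}-\zeta_{\widehat{\bm z}}}_{\partial\mathcal T_h}$.

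Inserting this bound for $\norm{\zeta_{\bm z}}_{\mathcal T_h}$ into the energy identity and absorbing with Young's inequality gives $\norm{\zeta_{\mathbb G}}_{\mathcal T_h}+\norm{\zeta_{\bm z}}_{\mathcal T_h}\lesssim\norm{\zeta_{\bm y}}_{\mathcal T_h}$, and the bound on $\norm{\zeta_{\bm y}}_{\mathcal T_h}$ established in Step~6 then closes the argument. In fact this route delivers
\begin{align*}
	\norm{\zeta_{\mathbb G}}_{\mathcal T_h}+\norm{\zeta_{\bm z}}_{\mathcal T_h}\lesssim h^{s_{\mathbb L}+\frac12}\norm{\mathbb L}_{s_{\mathbb L},\Omega}+h^{s_{\bm y}-\frac12}\norm{\bm y}_{s_{\bm y},\Omega}+h^{s_{\mathbb G}-\frac12}\norm{\mathbb G}_{s_{\mathbb G},\Omega}+h^{s_{\bm z}-\frac32}\norm{\bm z}_{s_{\bm z},\Omega},
\end{align*}
which involves no pressure norms and is stronger than, hence implies, the two stated inequalities.

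The one step requiring genuine care is the $\zeta$-analogue of the key inequality: one must check that $\nabla\zeta_{\bm z}$ is an admissible test flux, that the boundary terms produced by integration by parts are exactly those in the statement, and that replacing $\zeta_{\bm z}$ by $\bm P_M\zeta_{\bm z}$ on element boundaries costs only a multiple of $h^{1/2}\norm{\nabla\zeta_{\bm z}}_{\mathcal T_h}$, so that it can be absorbed on the left. Everything else is a verbatim repetition of the Young's-inequality absorptions already carried out in Lemmas~\ref{error_energy_norm} and~\ref{lemma:step6_main_lemma}, so I do not anticipate a genuinely new obstacle beyond this bookkeeping.
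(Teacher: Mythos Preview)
Your proof is correct and follows the same route as the paper's: the energy identity from testing \eqref{eq_yh2} with the $\zeta$-variables, followed by the key inequality of Lemma~\ref{firstlemmastep1} (which the paper simply invokes via \eqref{grad_y_h} rather than re-deriving as you do) and the discrete Poincar\'e inequality to reduce everything to $\|\zeta_{\bm y}\|_{\mathcal T_h}$. Your observation that the resulting bound is pressure-free --- the $p$ and $q$ terms in the stated lemma are superfluous --- is correct, and indeed the paper's own proof delivers exactly the pressure-free estimate you wrote down.
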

	\begin{proof}
		By Lemma \ref{property_B}, the error equation  \eqref{eq_yh2}, and since $\zeta_{ \widehat {\bm z}} = 0$ on $\mathcal E_h^{\partial}$, we have
		\begin{align*}
			\hspace{2em}&\hspace{-2em}  {(\zeta_{\mathbb G},\zeta_{\mathbb G})_{{\mathcal{T}_h}}+h^{-1}  \|\bm P_M \zeta_{\bm z} -\zeta_{\widehat {\bm z}}\|_{\partial\mathcal T_h}^2} \\
			&={\mathscr B ( \zeta_{\mathbb G},\zeta_{\bm z},-\zeta_{q},-\zeta_{\widehat{q}}, \zeta_{\widehat{\bm z}}; \zeta_{\mathbb G},\zeta_{\bm z},-\zeta_{q},-\zeta_{\widehat{q}}, \zeta_{\widehat{\bm z}})} \\						
			&=(\zeta_{\bm y},\zeta_{\bm z})_{\mathcal T_h}\\
			&\le \norm{\zeta_{\bm y}}_{\mathcal T_h} \norm{\zeta_{\bm z}}_{\mathcal T_h}\\
			&\lesssim \norm{\zeta_{\bm y}}_{\mathcal T_h} (\|\nabla \zeta_{\bm z}\|_{\mathcal T_h} + h^{-\frac 1 2} \| \zeta_{\bm z} - \zeta_{\widehat {\bm z}}\|_{\partial\mathcal T_h}) \\
			&\lesssim \norm{\zeta_{\bm y}}_{\mathcal T_h} (\|\zeta_{\mathbb G}\|_{\mathcal T_h} + h^{-\frac 1 2} \|\bm P_M \zeta_{\bm z} - \zeta_{\widehat {\bm z}}\|_{\partial\mathcal T_h}),
		\end{align*}
		where we used the discrete Poincar\'{e} inequality in Lemma \ref{lemma:discr_Poincare_ineq} and also \eqref{grad_y_h}.  This implies
		% we get
		\begin{align*}
			\norm {\zeta_{\mathbb G}}_{\mathcal T_h} +h^{-\frac1 2}\|\bm P_M\zeta_{\bm z}-\zeta_{\widehat {\bm z}}\|_{\partial\mathcal T_h} &{\lesssim  \norm{\zeta_{\bm y}}_{\mathcal T_h}}\\
			&\lesssim h^{s_{\mathbb L}+\frac 1 2}\norm{\mathbb L}_{s_{\mathbb L},\Omega}  +  h^{s_{\bm y}-\frac 1 2}\norm{\bm y}_{s_{\bm y},\Omega}\\
			&+ h^{s_{\mathbb G}-\frac 1 2}\norm{\mathbb G}_{s_{\mathbb G},\Omega}{+h^{s_{\bm z}-\frac 3 2}\norm{\bm z}_{s_{\bm z},\Omega}}.
		\end{align*}
		
		The discrete Poincar\'{e} inequality in Lemma \ref{lemma:discr_Poincare_ineq} also gives
		\begin{align*}
			\|\zeta_{\bm z}\|_{\mathcal T_h} & \lesssim \|\nabla \zeta_{\bm z}\|_{\mathcal T_h} + h^{-\frac 1 2} \|\zeta_{\bm z} - \zeta_{\widehat {\bm{z}}}\|_{\partial\mathcal T_h}\\
			&\lesssim \|\zeta_{\mathbb G}\|_{\mathcal T_h} + h^{-\frac 1 2} \|\bm P_M \zeta_{\bm z} - \zeta_{\widehat {\bm z}}\|_{\partial\mathcal T_h}\\
			&\lesssim h^{s_{\mathbb L}+\frac 1 2}\norm{\mathbb L}_{s_{\mathbb L},\Omega}  +  h^{s_{\bm y}-\frac 1 2}\norm{\bm y}_{s_{\bm y},\Omega}+ h^{s_{\mathbb G}-\frac 1 2}\norm{\mathbb G}_{s_{\mathbb G},\Omega}{+h^{s_{\bm z}-\frac 3 2}\norm{\bm z}_{s_{\bm z},\Omega}}.
		\end{align*}
	\end{proof}
	
	The above lemma along with the triangle inequality and Lemmas \ref{lemma:step4_conv_rates} and \ref{lemma:step7_conv_rates} gives the next part of the main result:
	\begin{theorem}
		Let $(\mathbb G, \bm z)$ and $(\mathbb G_h, \bm z_h)$ be the {solutions} of \eqref{optimality_system} and \eqref{HDG_full_discrete}, respectively. We have
		\begin{subequations}
			\begin{align}
				\norm {\mathbb G - \mathbb G_h}_{\mathcal T_h}   & \lesssim h^{s_{\mathbb L}+\frac 1 2}\norm{\mathbb L}_{s_{\mathbb L},\Omega}  +  h^{s_{\bm y}-\frac 1 2}\norm{\bm y}_{s_{\bm y},\Omega}+ h^{s_{\mathbb G}-\frac 1 2}\norm{\mathbb G}_{s_{\mathbb G},\Omega}{+h^{s_{\bm z}-\frac 3 2}\norm{\bm z}_{s_{\bm z},\Omega}},\\
				\norm {\bm z - \bm z_h}_{\mathcal T_h}  &\lesssim h^{s_{\mathbb L}+\frac 1 2}\norm{\mathbb L}_{s_{\mathbb L},\Omega}  +  h^{s_{\bm y}-\frac 1 2}\norm{\bm y}_{s_{\bm y},\Omega}+ h^{s_{\mathbb G}-\frac 1 2}\norm{\mathbb G}_{s_{\mathbb G},\Omega}{+h^{s_{\bm z}-\frac 3 2}\norm{\bm z}_{s_{\bm z},\Omega}}.
			\end{align}
		\end{subequations}
	\end{theorem}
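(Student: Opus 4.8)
The plan is to derive both bounds from a single triangle–inequality splitting together with the estimates already in hand. Using the Step~6 notation, I would write $\mathbb G - \mathbb G_h = (\mathbb G - \mathbb G_h(u)) + \zeta_{\mathbb G}$ and $\bm z - \bm z_h = (\bm z - \bm z_h(u)) + \zeta_{\bm z}$, so that
\[
\norm{\mathbb G - \mathbb G_h}_{\mathcal T_h} \le \norm{\mathbb G - \mathbb G_h(u)}_{\mathcal T_h} + \norm{\zeta_{\mathbb G}}_{\mathcal T_h},
\qquad
\norm{\bm z - \bm z_h}_{\mathcal T_h} \le \norm{\bm z - \bm z_h(u)}_{\mathcal T_h} + \norm{\zeta_{\bm z}}_{\mathcal T_h}.
\]

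For the first summand on each right-hand side I would invoke Lemma~\ref{lemma:step7_conv_rates}, which controls $\norm{\mathbb G - \mathbb G_h(u)}_{\mathcal T_h}$ and $\norm{\bm z - \bm z_h(u)}_{\mathcal T_h}$ purely in terms of $\norm{\mathbb L}_{s_{\mathbb L},\Omega}$, $\norm{\bm y}_{s_{\bm y},\Omega}$, $\norm{\mathbb G}_{s_{\mathbb G},\Omega}$ and $\norm{\bm z}_{s_{\bm z},\Omega}$, with the $h$-exponents $s_{\mathbb L}+1$, $s_{\bm y}$, $s_{\mathbb G}$, $s_{\bm z}$; since $h \le 1$ these exceed the corresponding exponents $s_{\mathbb L}+\tfrac12$, $s_{\bm y}-\tfrac12$, $s_{\mathbb G}-\tfrac12$, $s_{\bm z}-\tfrac32$ in the claimed estimate, so these contributions are absorbed. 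For the second summand I would use the preceding lemma, whose proof in fact establishes $\norm{\zeta_{\mathbb G}}_{\mathcal T_h} + \norm{\zeta_{\bm z}}_{\mathcal T_h} \lesssim \norm{\zeta_{\bm y}}_{\mathcal T_h}$, which is precisely the pressure-free right-hand side $h^{s_{\mathbb L}+1/2}\norm{\mathbb L}_{s_{\mathbb L},\Omega} + h^{s_{\bm y}-1/2}\norm{\bm y}_{s_{\bm y},\Omega} + h^{s_{\mathbb G}-1/2}\norm{\mathbb G}_{s_{\mathbb G},\Omega} + h^{s_{\bm z}-3/2}\norm{\bm z}_{s_{\bm z},\Omega}$ obtained in Step~6. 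Summing the two contributions and discarding the higher-order terms yields the two stated inequalities.

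I do not expect a genuine obstacle here: all the analytical effort — in particular, ensuring that every constant is independent of the pressures $p$ and $q$ — was already carried out in the auxiliary-problem lemmas of Steps~1--5 and in the Step~6 estimate for $u - u_h$ and $\bm y - \bm y_h$. The present theorem is just the bookkeeping step that assembles those pressure-robust pieces through the triangle inequality. The only point deserving a line of care is checking that each $h$-power produced by Lemma~\ref{lemma:step7_conv_rates} is at least as large as its counterpart in the target bound, so that those terms are dominated rather than dominant; this is immediate from the definitions of $s_{\mathbb L}, s_{\bm y}, s_{\mathbb G}, s_{\bm z}$ and the assumption $h \le 1$.
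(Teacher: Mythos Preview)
Your proposal is correct and matches the paper's own argument: the theorem is obtained precisely by combining the triangle-inequality splitting $\mathbb G - \mathbb G_h = (\mathbb G - \mathbb G_h(u)) + \zeta_{\mathbb G}$ (and similarly for $\bm z$) with the preceding lemma's bound on $\zeta_{\mathbb G},\zeta_{\bm z}$ and Lemma~\ref{lemma:step7_conv_rates}. Your observation that the preceding lemma's proof actually gives the pressure-free estimate $\norm{\zeta_{\mathbb G}}_{\mathcal T_h}+\norm{\zeta_{\bm z}}_{\mathcal T_h}\lesssim\norm{\zeta_{\bm y}}_{\mathcal T_h}$ is exactly the point needed here.
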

	
	\subsubsection*{Step 8: Estimate for $\|\mathbb L - \mathbb L_h\|_{\mathcal T_h}$}
	
	\begin{lemma}
		If  $k\geq 1$ holds, then
		\begin{align}
			\norm {\zeta_{\mathbb L}}_{\mathcal T_h}\lesssim h^{s_{\mathbb L}}\norm{\mathbb L}_{s_{\mathbb L},\Omega} + h^{s_{\bm y}-1}\norm{\bm y}_{s_{\bm y},\Omega} +h^{s_{\mathbb G}-1}\norm{\mathbb G}_{s_{\mathbb G},\Omega}{+h^{s_{\bm z}-2}\norm{\bm z}_{s_{\bm z},\Omega}}.\label{eL}
		\end{align}
	\end{lemma}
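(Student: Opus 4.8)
The plan is to estimate $\zeta_{\mathbb L}=\mathbb L_h(u)-\mathbb L_h$ directly from the first error equation \eqref{eq_yh1} by an energy argument, thereby reducing everything to the bound on $\|u-u_h\|_{\mathcal E_h^\partial}$ already obtained in Step~6. First I would take the test functions $(\mathbb T_1,\bm v_1,w_1,\widehat w_1,\bm\mu_1)=(\zeta_{\mathbb L},\zeta_{\bm y},\zeta_p,\zeta_{\widehat p},\zeta_{\widehat{\bm y}})$ in \eqref{eq_yh1}. Then the right-hand side becomes $\langle (P_M u-u_h)\bm\tau,\,h^{-1}\zeta_{\bm y}+\zeta_{\mathbb L}\bm n\rangle_{\mathcal E_h^\partial}$, while, since this tuple is of exactly the form covered by Lemma~\ref{property_B} (note $\zeta_p\in W_h^0\subset W_h$, $\zeta_{\widehat p}\in Q_h$, $\zeta_{\widehat{\bm y}}\in\bm M_h(o)$ on $\mathcal E_h^o$), the left-hand side equals
\[
\|\zeta_{\mathbb L}\|_{\mathcal T_h}^2+h^{-1}\|\bm P_M\zeta_{\bm y}-\zeta_{\widehat{\bm y}}\|_{\partial\mathcal T_h\backslash\mathcal E_h^\partial}^2+h^{-1}\|\bm P_M\zeta_{\bm y}\|_{\mathcal E_h^\partial}^2 .
\]
So everything reduces to bounding the two boundary terms on the right by the coercivity functional on the left together with $\|u-u_h\|_{\mathcal E_h^\partial}$.

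The one delicate point — and the main obstacle — is the term $h^{-1}\langle (P_M u-u_h)\bm\tau,\zeta_{\bm y}\rangle_{\mathcal E_h^\partial}$: estimating $\|\zeta_{\bm y}\|_{\mathcal E_h^\partial}$ by a crude inverse inequality costs an extra factor $h^{-1/2}$ and yields a rate that is $h^{1/2}$ short of \eqref{eL}. Instead I would use that on each (straight) boundary edge the unit tangent $\bm\tau$ is constant, so $(P_M u-u_h)\bm\tau\in\bm M_h$ there; hence $\langle (P_M u-u_h)\bm\tau,\zeta_{\bm y}\rangle_{\mathcal E_h^\partial}=\langle (P_M u-u_h)\bm\tau,\bm P_M\zeta_{\bm y}\rangle_{\mathcal E_h^\partial}$, i.e.\ only the projected trace $\bm P_M\zeta_{\bm y}$ occurs, and Young's inequality lets me absorb a multiple of $h^{-1}\|\bm P_M\zeta_{\bm y}\|_{\mathcal E_h^\partial}^2$ into the left-hand side, leaving $Ch^{-1}\|P_M u-u_h\|_{\mathcal E_h^\partial}^2$. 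For the remaining term $\langle (P_M u-u_h)\bm\tau,\zeta_{\mathbb L}\bm n\rangle_{\mathcal E_h^\partial}$ the discrete trace (inverse) inequality on the quasi-uniform mesh gives $\|\zeta_{\mathbb L}\bm n\|_{\mathcal E_h^\partial}\lesssim h^{-1/2}\|\zeta_{\mathbb L}\|_{\mathcal T_h}$, and Young's inequality absorbs $\tfrac14\|\zeta_{\mathbb L}\|_{\mathcal T_h}^2$. Collecting terms produces $\|\zeta_{\mathbb L}\|_{\mathcal T_h}^2\lesssim h^{-1}\|P_M u-u_h\|_{\mathcal E_h^\partial}^2$.

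To conclude, since $u_h\in M_h$ the $L^2(\mathcal E_h^\partial)$-projection is norm nonincreasing, so $\|P_M u-u_h\|_{\mathcal E_h^\partial}=\|P_M(u-u_h)\|_{\mathcal E_h^\partial}\le\|u-u_h\|_{\mathcal E_h^\partial}$, and the Step~6 estimate for $\|u-u_h\|_{\mathcal E_h^\partial}$ gives
\[
\|\zeta_{\mathbb L}\|_{\mathcal T_h}\lesssim h^{-1/2}\|u-u_h\|_{\mathcal E_h^\partial}\lesssim h^{s_{\mathbb L}}\|\mathbb L\|_{s_{\mathbb L},\Omega}+h^{s_{\bm y}-1}\|\bm y\|_{s_{\bm y},\Omega}+h^{s_{\mathbb G}-1}\|\mathbb G\|_{s_{\mathbb G},\Omega}+h^{s_{\bm z}-2}\|\bm z\|_{s_{\bm z},\Omega},
\]
which is \eqref{eL}; the $h^{-1/2}$ from the inverse inequality is exactly compensated by the leading power $h^{s_{\mathbb L}+1/2}$ in the $\|u-u_h\|_{\mathcal E_h^\partial}$ bound, and no pressure norms enter, so the estimate is pressure-robust. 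I expect the hypothesis $k\ge1$ to play no role in the argument itself — every ingredient (Lemma~\ref{property_B}, the orthogonality of $\bm P_M$, the trace inequality, and the Step~6 bound) is valid also for $k=0$ — but it is what makes the exponents $s_{\mathbb G}-1$ and $s_{\bm z}-2$ strictly positive, so that \eqref{eL} encodes an actual convergence rate rather than a trivial $O(1)$ bound.
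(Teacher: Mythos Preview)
Your proof is correct and follows essentially the same approach as the paper: test \eqref{eq_yh1} with $(\zeta_{\mathbb L},\zeta_{\bm y},\zeta_p,\zeta_{\widehat p},\zeta_{\widehat{\bm y}})$, apply the coercivity identity of Lemma~\ref{property_B}, use that $(P_M u-u_h)\bm\tau\in\bm M_h$ so only $\bm P_M\zeta_{\bm y}$ appears in the $h^{-1}$ boundary term, apply the discrete trace inequality to $\zeta_{\mathbb L}\bm n$, and finish with the Step~6 bound for $\|u-u_h\|_{\mathcal E_h^\partial}$. Your observation that the hypothesis $k\ge1$ is not used in the argument itself but only serves to make the exponents $s_{\mathbb G}-1$ and $s_{\bm z}-2$ strictly positive is a clarification the paper does not spell out.
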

	\begin{proof}
		By Lemma \ref{property_B} and the error equation \eqref{eq_yh1}, we have
		\begin{align*}
			\hspace{2em}&\hspace{-2em}  {(\zeta_{\mathbb L}, \zeta_{\mathbb L})_{{\mathcal{T}_h}}+\langle (h^{-1} (\bm P_M \zeta_{\bm y}-\zeta_{\widehat {\bm y}}) , \zeta_{\bm y}-\zeta_{\widehat {\bm y}} \rangle_{\partial{{\mathcal{T}_h}}\backslash\mathcal E_h^\partial}+ \langle h^{-1}\bm P_M \zeta_{\bm y}, \bm P_M\zeta_{\bm y} \rangle_{\mathcal E_h^\partial}}\\
			&=\mathscr B ( \zeta_{\mathbb L},\zeta_{\bm y},\zeta_{p}, \zeta_{\widehat{\bm y}}; \zeta_{\mathbb L},\zeta_{\bm y},\zeta_{p}, \zeta_{\widehat{\bm y}}) \\
			&= \langle (P_M  u -  u_h)\bm{\tau}, \zeta_{\mathbb L}\cdot \bm{n} + h^{-1} \zeta_{\bm y} \rangle_{{\mathcal E_h^{\partial}}}\\
			& =\langle   (u-  u_h)\bm \tau, \zeta_{\mathbb L}\cdot \bm{n} + h^{-1} \bm P_M \zeta_{\bm y} \rangle_{{\mathcal E_h^{\partial}}}\\
			&	\lesssim \norm { u- u_h}_{\mathcal E_h^{\partial}} (\norm {\zeta_{\mathbb L}}_{\mathcal E_h^{\partial}} + h^{-1} \norm {\bm P_M \zeta_{\bm y}}_{\mathcal E_h^{\partial}})\\
			&	\lesssim h^{-\frac 1 2}\norm { u- u_h}_{\mathcal E_h^{\partial}} (\norm {\zeta_{\mathbb L}}_{\mathcal T_h} + h^{-\frac 1 2} \norm {\bm P_M \zeta_{\bm y}}_{\mathcal E_h^{\partial}}),
		\end{align*}
		which gives
		\begin{align*}
			\norm {\zeta_{\mathbb L}}_{\mathcal T_h} {\lesssim h^{-\frac 1 2}\norm { u- u_h}_{\mathcal E_h^{\partial}}} \lesssim h^{s_{\mathbb L}}\norm{\mathbb L}_{s_{\mathbb L},\Omega} + h^{s_{\bm y}-1}\norm{\bm y}_{s_{\bm y},\Omega} +h^{s_{\mathbb G}-1}\norm{\mathbb G}_{s_{\mathbb G},\Omega}{+h^{s_{\bm z}-2}\norm{\bm z}_{s_{\bm z},\Omega}}.
		\end{align*}
	\end{proof}

	The above lemma along with the triangle inequality and Lemmas \ref{lemma:step4_conv_rates} and \ref{lemma:step7_conv_rates}  {completes} the proof of the main result:
	\begin{theorem}
		Let $\mathbb L$ and $\mathbb L_h$ be the  {solutions} of \eqref{optimality_system} and \eqref{HDG_full_discrete}, respectively. If  $k\geq 1$ holds, then
		\begin{align*}
			\norm {\mathbb L - \mathbb L_h}_{\mathcal T_h} \lesssim h^{s_{\mathbb L}}\norm{\mathbb L}_{s_{\mathbb L},\Omega} + h^{s_{\bm y}-1}\norm{\bm y}_{s_{\bm y},\Omega} +h^{s_{\mathbb G}-1}\norm{\mathbb G}_{s_{\mathbb G},\Omega}{+h^{s_{\bm z}-2}\norm{\bm z}_{s_{\bm z},\Omega}}.
		\end{align*}
	\end{theorem}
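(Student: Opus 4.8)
The plan is to reduce the estimate to two pieces that have already been controlled in the previous steps. I would split
\[
\mathbb L - \mathbb L_h = \bigl(\mathbb L - \mathbb L_h(u)\bigr) + \bigl(\mathbb L_h(u) - \mathbb L_h\bigr) = \bigl(\mathbb L - \mathbb L_h(u)\bigr) + \zeta_{\mathbb L},
\]
so that by the triangle inequality $\norm{\mathbb L - \mathbb L_h}_{\mathcal T_h} \le \norm{\mathbb L - \mathbb L_h(u)}_{\mathcal T_h} + \norm{\zeta_{\mathbb L}}_{\mathcal T_h}$, and it suffices to bound each summand. The first term is the error between the exact flux and the flux of the auxiliary problem \eqref{HDG_u_a}, which is exactly what Lemma \ref{lemma:step4_conv_rates} provides: $\norm{\mathbb L - \mathbb L_h(u)}_{\mathcal T_h} \lesssim h^{s_{\mathbb L}}\norm{\mathbb L}_{s_{\mathbb L},\Omega} + h^{s_{\bm y}-1}\norm{\bm y}_{s_{\bm y},\Omega}$. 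The second term $\zeta_{\mathbb L}$ is the error between the auxiliary flux and the true discrete flux, bounded in the immediately preceding lemma (estimate \eqref{eL}) under the assumption $k\ge 1$, namely $\norm{\zeta_{\mathbb L}}_{\mathcal T_h} \lesssim h^{s_{\mathbb L}}\norm{\mathbb L}_{s_{\mathbb L},\Omega} + h^{s_{\bm y}-1}\norm{\bm y}_{s_{\bm y},\Omega} + h^{s_{\mathbb G}-1}\norm{\mathbb G}_{s_{\mathbb G},\Omega} + h^{s_{\bm z}-2}\norm{\bm z}_{s_{\bm z},\Omega}$.

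Summing the two displays and absorbing the redundant (higher) powers of $h$ in the first bound into the matching terms of the second yields precisely the claimed estimate, with no pressure-dependent contributions. It is worth noting where $k\ge 1$ enters: it is used only through \eqref{eL}, whose proof combines the energy identity of Lemma \ref{property_B} for $\mathscr B$ applied to $(\zeta_{\mathbb L},\zeta_{\bm y},\zeta_p,\zeta_{\widehat p},\zeta_{\widehat{\bm y}})$, the error equation \eqref{eq_yh1}, the already established bound on $\norm{u-u_h}_{\mathcal E_h^\partial}$ from Step 6, and a trace/inverse inequality on $\mathcal E_h^\partial$; that chain of estimates degenerates when $k=0$, which is why the flux bound is stated only for $k\ge 1$.

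There is no genuine obstacle left at this final step: all of the analytical work — the energy identities for $\mathscr B$, the duality arguments for $\bm y$ and $\bm z$, the discrete Poincar\'e inequality, and the control of $u-u_h$ — was carried out in Steps 2--7, and each of the two ingredients being combined is already pressure-independent, so their sum is as well. The only care needed is bookkeeping on the powers of $h$ so that the right-hand side reads exactly $h^{s_{\mathbb L}}\norm{\mathbb L}_{s_{\mathbb L},\Omega} + h^{s_{\bm y}-1}\norm{\bm y}_{s_{\bm y},\Omega} + h^{s_{\mathbb G}-1}\norm{\mathbb G}_{s_{\mathbb G},\Omega} + h^{s_{\bm z}-2}\norm{\bm z}_{s_{\bm z},\Omega}$, which completes the proof of Theorem \ref{main_res}.
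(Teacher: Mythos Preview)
Your proposal is correct and follows exactly the paper's approach: the paper states that the theorem follows from the preceding lemma (estimate \eqref{eL}) together with the triangle inequality and Lemma \ref{lemma:step4_conv_rates}, which is precisely the decomposition $\mathbb L - \mathbb L_h = (\mathbb L - \mathbb L_h(u)) + \zeta_{\mathbb L}$ you carry out. Your additional remarks on where $k\ge 1$ enters and on pressure-independence are accurate commentary but not part of the paper's one-line justification.
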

	
	\section{Numerical experiments}
	\label{Numerical_experiments}
	
	In this section, we present  {some} numerical experiments to illustrate our theoretical results (see Theorem \ref{main_res}).  We use  uniform triangular meshes and  define
	\begin{align*}
		\textup{div}(\bm y_h)=\max_{K\in\mathcal T_h} \dfrac{1}{|K|}\int_K|\nabla\cdot\bm y_h| ~{\rm d}\bm x.		
	\end{align*}

	\begin{example}\label{example1}
		We begin with an example which has an  {analytical} solution. The domain is the unit square $\Omega = (0,1)^2$ and the data is chosen as
		\begin{gather*}
			y_1= -2\pi^2\sin^2(\pi x_1)\cos(\pi x_2) -2\pi^2\sin(\pi x_1)\sin(2\pi x_2), \\
			y_2 = 2\pi^2\cos(\pi x_1) \sin^2(\pi x_2) +2\pi^2\sin(\pi x_2) \sin(2\pi x_1),\\
			\ \ z_1= \pi \sin^2(\pi x_1) \sin(2\pi x_2),
			\ \  z_2= -\pi\sin^2(\pi x_2) \sin(2\pi x_1),\\
			p=  10^n\cos(\pi x_1),\ \ q= 10^n \cos(\pi x_1),\ \ \gamma = 1.
		\end{gather*}
		Here  $n$ is a parameter.
	\end{example}

    To make a comparison, we first   {solve} the optimality system  \eqref{optimality_system} by using the HDG method proposed in \cite{GongStokes_Tangential1},  with $n=2, 4, 6$ and $k=0$. The errors for all variables are shown in Tables \ref{Table5} and \ref{Table6}. Although the convergence rates are optimal  {and consistent with} the error analysis in \cite{GongStokes_Tangential1} for $n=4, 6$, the magnitude of the errors strongly depend on the pressures. This shows that the algorithm proposed and analyzed in \cite{GongStokes_Tangential1} is not pressure-robust.

    	\begin{table}[H]\small
    	\caption{Example \ref{example1}:   Lack of pressure-robustness:   Errors and observed convergence orders for the control $u$, pressure $p$, state $\bm y$,  and its flux $\mathbb L$ by using the HDG method in \cite{GongStokes_Tangential1}.}
    	\label{Table5}
    	\centering
    	\begin{tabular}{c|c|c|c|c|c|c|c|c|c|c|c}
    		\Xhline{1pt}
    		
    		\multirow{2}{*}{$k$} &
    		\multirow{2}{*}{$n$} &
    		\multirow{2}{*}{$\frac{\sqrt{2}}{h}$} &
    		\multirow{2}{*}{$\textup{div}(\bm y_h)$}&
    		
    		\multicolumn{2}{c|}{$\|\bm y-\bm y_h\|_{L^2(\Omega)}$} &

    		\multicolumn{2}{c|}{$\|\mathbb L-\mathbb L_h\|_{L^2(\Omega)}$}&
    		
    		\multicolumn{2}{c|}{$\|p-p_h\|_{L^2(\Omega)}$}&
    		
    		\multicolumn{2}{c}{$\|u-u_h\|_{L^2(\Gamma)}$}
    		\\
    		
    		\cline{5-12}
    		& 	& 	& &Error &Rate  &Error &Rate &Error &Rate  &Error &Rate  \\
    		
    		\Xhline{1pt}
    		
    		&		&	4	&	7.52E+01	&	1.18E+01	&		&	5.58E+01	&		&	3.63E+01	&		&1.77E+01	&		\\
    		&		&	8	&	3.36E+01	&	3.58E+00	&	1.72 	&	2.80E+01	&	0.99 	&2.45E+01	&	0.56 	&	6.33E+00	&	1.48	\\
    		0	&	$2$	&	16	&	1.59E+01	&	1.51E+00	&	1.24 	&	1.39E+01	&	0.99 	&	1.43E+01	&	0.78 	&	1.89E+00	&	1.74	\\
    		&		&	32	&	7.79E+00	&	1.03E+00	&	0.54	&	8.92E+00	&	0.64 	&	1.14E+01	&	0.32	&	5.65E-01	&	1.74 	\\
    		&		&	64	&	3.87E+00	&	9.31E-01	&	0.15 	&	7.49E+00	&	0.25 	&	1.10E+01	&	0.04 	&	2.69E-01	&	1.06 	\\
    		\Xhline{1pt}
    		&		&	4	&		2.74E+03	&	6.17E+02	&		&	4.49E+03	&		&	3.43E+03	&		&	1.74E+03	&		\\
    		&		&	8	&1.17E+03	&	1.69E+02	&	1.86 	&	2.21E+03	&	1.09 	&	2.19E+03	&	0.64 	&	6.24E+02	&	1.48 	\\
    		0	&	$4$	&	16	&	4.78E+02	&	4.56E+01	&	1.89 	&	8.30E+02	&	1.34 	&	9.17E+02	&	1.25 	&	1.83E+02	&	1.76 	\\
    		&		&	32	&	2.14E+02	&	1.19E+01	&	1.93 	&	3.33E+02	&	1.31	&	3.14E+02	&	1.54 	&	4.95E+01	&	1.89	\\
    		&		&	64	&		1.03E+02		&	3.15E+00	&	1.91	&	1.46E+02	&	1.18 	&	1.00E+02	&	1.65 	&	1.28E+01	&	1.94 	\\
    		\Xhline{1pt}
    		&		&	4	&	2.74E+05	&	6.17E+04	&		&	4.49E+05	&		&	3.43E+05	&		&	1.74E+05	&		\\
    	&		&	8	&	1.17E+05	&	1.69E+04	&	1.86 	&	2.21E+05	&	1.09 	&	2.19E+05	&	0.64 	&	6.24E+04	&	1.48	\\
    	0	&	$6$	&	16	&	4.78E+04	&	4.56E+03	&	1.89 	&	8.30E+04	&	1.34 	&	9.17E+04	&	1.25 	&	1.83E+04	&	1.76 	\\
    	&		&	32	&	2.14E+04	&	1.19E+03	&	1.93 	&	3.33E+04	&	1.31	&	3.14E+04	&	1.54 	&	4.95E+03	&	1.89	\\
    	&		&	64	&	1.03E+04	&	3.15E+02	&	1.91	&	1.46E+04	&	1.18 	&	1.00E+04	&	1.65 	&	1.28E+03	&	1.94 	\\
    		    		\Xhline{1pt}
    	\end{tabular}
    \end{table}

    \begin{table}[H]\small
    	\caption{Example \ref{example1}: Lack of pressure-robustness: Errors and observed convergence orders for the dual pressure $q$,  dual state $\bm z$,  and its flux $\mathbb G$ by using the HDG method in \cite{GongStokes_Tangential1}.}
    	\label{Table6}
    	\centering
    	\begin{tabular}{c|c|c|c|c|c|c|c|c|c}
    		\Xhline{1pt}
    		
    		\multirow{2}{*}{$k$} &
    		\multirow{2}{*}{$n$} &
    		\multirow{2}{*}{$\frac{\sqrt{2}}{h}$} &
    		\multirow{2}{*}{$\textup{div}(\bm z_h)$}&
    		
    		\multicolumn{2}{c|}{$\|\bm z-\bm z_h\|_{L^2(\Omega)}$} &

    		\multicolumn{2}{c|}{$\|\mathbb G-\mathbb G_h\|_{L^2(\Omega)}$}&
    		
    		\multicolumn{2}{c}{$\|q-q_h\|_{L^2(\Omega)}$}

    		\\
    		
    		\cline{5-10}
    		& & 	& &Error &Rate  &Error &Rate &Error &Rate    \\

    		\Xhline{1pt}
    		
    		&		&	4	&	1.19E+01	&	3.21E+00	&		&	1.35E+01	&		&	9.28E+00	&		\\
    		&		&	8	&	5.62E+00	&	8.98E-01	&	1.83 	&	7.80E+00	&	0.79	&	3.31E+00	&	1.48 	\\
    		0	&	$2$	&	16	&	2.76E+00&	2.30E-01	&	1.96 	&	4.06E+00	&	0.94 	&1.00E+00	&	1.71 	\\
    		&		&	32	&	1.38E+00	&	5.74E-02	&	2.00	&	2.05E+00	&	0.98 	&	3.16E-01	&	1.67 	\\
    		&		&	64	&	6.89E-01	&	1.96E-02	&	1.54 	&	1.03E+00	&	0.98	&	1.20E-01	&	1.39	\\
    		\Xhline{1pt}
    		
    		&		&	4	&	1.12E+03	&	3.08E+02	&		&	1.31E+03	&		&	9.15E+02	&		\\
    		&		&	8	&	5.11E+02	&	8.66E+01	&	1.83	&	7.59E+02	&	0.79 	&	3.21E+02	&	1.50 	\\
    		0	&	$4$	&	16	&2.51E+02	&	2.25E+01	&	1.94 	&	3.95E+02	&	0.94 	&	9.27E+01	&	1.79 	\\
    		&		&	32	&	1.25E+02		&	5.68E+00	&	1.98 	&	1.99E+02	&	0.98	&	2.45E+01	&	1.91 	\\
    		&		&	64	&		6.26E+01	&	1.42E+00	&	1.99 	&	1.00E+02	&	0.99 	&	6.25E+00	&	1.97 	\\
    		\Xhline{1pt}
    		
    	    		&		&	4	& 1.12E+05	&	3.08E+04	&		&	1.31E+05	&		&	9.15E+04	&		\\
    	&		&	8	&	5.11E+04	&	8.66E+03	&	1.83	&	7.59E+04	&	0.79 	&	3.21E+04	&	1.50 	\\
    	0	&	$6$	&	16	&	2.51E+04	&	2.25E+03	&	1.94 	&	3.95E+04	&	0.94 	&	9.27E+03	&	1.79 	\\
    	&		&	32	&	1.25E+04	&	5.68E+02	&	1.98 	&	1.99E+04	&	0.98	&	2.45E+03	&	1.91 	\\
    	&		&	64	&	6.26E+03	&	1.42E+02	&	1.99 	&	1.00E+04	&	0.99 	&	6.25E+02	&	1.97 	\\
    		\Xhline{1pt}
    	\end{tabular}
    \end{table}

	Now we use the new  HDG method (see the formulation \eqref{HDG_discrete2}) to test the same problem. The errors for all variables are shown in Tables \ref{Table1} and \ref{Table2}. We see that the error  {magnitudes} of the state $\bm y$, dual state $\bm z$ and control $u$ are \emph{independent} of the pressure $p$ and the dual pressure $q$. We also notice that the convergence rates are higher than predicted by our error analysis; a similar phenomena has been observed for other numerical methods for Dirichlet boundary control problems involving elliptic equations  \cite{HuShenSinglerZhangZheng_HDG_Dirichlet_control1,MR3992054,HuMateosSinglerZhangZhang2} and Stokes  equations \cite{GongStokes_Tangential1,GongStokes_energy1}.  To the best of our knowledge, only one work explained the above phenomena: May, Rannacher, and Vexler in \cite{MR3070527} used a duality argument to obtain improved convergence rates for the state and dual state with the standard finite element method.  It is not clear how to apply this technique to the HDG methods.
		{

		\begin{table}[H]\small
		\caption{Example \ref{example1}:  Pressure-robustness:  Errors and observed convergence orders for the control $u$, pressure $p$, state $\bm y$,  and its flux $\mathbb L$ by using the new HDG formulation \eqref{HDG_discrete2}.}
			\label{Table1}
			\centering
			\begin{tabular}{c|c|c|c|c|c|c|c|c|c|c|c}
				\Xhline{1pt}
				
				\multirow{2}{*}{$k$} &
				\multirow{2}{*}{$n$} &
				\multirow{2}{*}{$\frac{\sqrt{2}}{h}$} &
				\multirow{2}{*}{$\textup{div}(\bm y_h)$}&
				
				\multicolumn{2}{c|}{$\|\bm y-\bm y_h\|_{L^2(\Omega)}$} &

				\multicolumn{2}{c|}{$\|\mathbb L-\mathbb L_h\|_{L^2(\Omega)}$}&
				
				\multicolumn{2}{c|}{$\|p-p_h\|_{L^2(\Omega)}$}&
				
				\multicolumn{2}{c}{$\|u-u_h\|_{L^2(\Gamma)}$}
				\\
				
				\cline{5-12}
				& 	& 	& &Error &Rate  &Error &Rate &Error &Rate  &Error &Rate  \\
				
				\Xhline{1pt}
				
				&		&	4	&	8.88E-16	&	8.76E+00	&		&	5.33E+01	&		&	1.47E+01	&		&	6.30E+00	&		\\
				&		&	8	&	6.66E-16	&	2.20E+00	&	2.00 	&	2.79E+01	&	0.93 	&	7.20E+00	&	1.03 	&	3.15E+00	&	1.00 	\\
				0	&	$2$	&	16	&	3.33E-16	&	5.41E-01	&	2.02 	&	1.41E+01	&	0.99 	&	3.64E+00	&	0.99 	&	1.64E+00	&	0.94 	\\
				&		&	32	&	3.05E-16	&	1.34E-01	&	2.01 	&	7.05E+00	&	1.00 	&	1.80E+00	&	1.02 	&	8.06E-01	&	1.03 	\\
				&		&	64	&	1.94E-16	&	3.34E-02	&	2.00 	&	3.52E+00	&	1.00 	&	8.86E-01	&	1.02 	&	3.98E-01	&	1.02 	\\
				\Xhline{1pt}
				&		&	4	&	1.78E-15	&	8.76E+00	&		&	5.33E+01	&		&	1.30E+03	&		&	6.40E+00	&		\\
				&		&	8	&	6.66E-16	&	2.20E+00	&	2.00 	&	2.79E+01	&	0.93 	&	6.53E+02	&	0.99 	&	3.42E+00	&	0.91 	\\
				0	&	$4$	&	16	&	3.87E-16	&	5.41E-01	&	2.02 	&	1.41E+01	&	0.99 	&	3.27E+02	&	1.00 	&	1.58E+00	&	1.11 	\\
				&		&	32	&	2.91E-16	&	1.34E-01	&	2.01 	&	7.05E+00	&	1.00 	&	1.64E+02	&	1.00 	&	7.53E-01	&	1.07 	\\
				&		&	64	&	1.87E-16	&	3.34E-02	&	2.00 	&	3.52E+00	&	1.00 	&	8.18E+01	&	1.00 	&	3.96E-01	&	0.93 	\\
				\Xhline{1pt}
				&		&	4	&	1.78E-15	&	8.76E+00	&		&	5.33E+01	&		&	1.30E+05	&		&	6.49E+00	&		\\
				0	&		&	8	&	6.66E-16	&	2.20E+00	&	2.00 	&	2.79E+01	&	0.93 	&	6.53E+04	&	0.99 	&	3.42E+00	&	0.93 	\\
				&	$6$	&	16	&	4.44E-16	&	5.41E-01	&	2.02 	&	1.41E+01	&	0.99 	&	3.27E+04	&	1.00 	&	1.66E+00	&	1.04 	\\
				&		&	32	&	3.19E-16	&	1.34E-01	&	2.01 	&	7.05E+00	&	1.00 	&	1.64E+04	&	1.00 	&	7.90E-01	&	1.07 	\\
				&		&	64	&	1.87E-16	&	3.34E-02	&	2.00 	&	3.52E+00	&	1.00 	&	8.18E+03	&	1.00 	&	3.98E-01	&	0.99 	\\
				
				\Xhline{1pt}
				
				&		&	4	&	1.88E-15	&	1.18E+00	&		&	1.40E+01	&		&	5.53E+00	&		&	1.61E+00	&		\\
				&		&	8	&	1.79E-15	&	1.52E-01	&	2.96 	&	3.78E+00	&	1.89 	&	1.19E+00	&	2.22 	&	4.37E-01	&	1.89 	\\
				1	&	$2$	&	16	&	1.05E-15	&	1.94E-02	&	2.97 	&	1.03E+00	&	1.88 	&	2.74E-01	&	2.12 	&	1.11E-01	&	1.98 	\\
				&		&	32	&	8.90E-16	&	2.45E-03	&	2.98 	&	2.91E-01	&	1.82 	&	6.89E-02	&	1.99 	&	2.77E-02	&	2.00 	\\
				&		&	64	&	4.72E-16	&	3.12E-04	&	2.98 	&	8.73E-02	&	1.74 	&	1.86E-02	&	1.89 	&	6.98E-03	&	1.99 	\\
				\Xhline{1pt}
				&		&	4	&	1.72E-15	&	1.18E+00	&		&	1.40E+01	&		&	1.25E+02	&		&	1.65E+00	&		\\
				&		&	8	&	1.78E-15	&	1.52E-01	&	2.96 	&	3.78E+00	&	1.89 	&	3.14E+01	&	1.99 	&	4.37E-01	&	1.92 	\\
				1	&	$4$	&	16	&	1.07E-15	&	1.94E-02	&	2.97 	&	1.03E+00	&	1.88 	&	7.87E+00	&	2.00 	&	1.11E-01	&	1.98 	\\
				&		&	32	&	8.90E-16	&	2.45E-03	&	2.98 	&	2.91E-01	&	1.82 	&	1.97E+00	&	2.00 	&	2.79E-02	&	1.99 	\\
				&		&	64	&	4.55E-16	&	3.12E-04	&	2.98 	&	8.73E-02	&	1.74 	&	4.92E-01	&	2.00 	&	6.98E-03	&	2.00 	\\
				\Xhline{1pt}
				&		&	4	&	1.65E-15	&	1.18E+00	&		&	1.40E+01	&		&	1.25E+04	&		&	1.65E+00	&		\\
				1	&		&	8	&	1.78E-15	&	1.52E-01	&	2.96 	&	3.78E+00	&	1.89 	&	3.14E+03	&	1.99 	&	4.37E-01	&	1.92 	\\
				&	$6$	&	16	&	1.03E-15	&	1.94E-02	&	2.97 	&	1.03E+00	&	1.88 	&	7.87E+02	&	2.00 	&	1.11E-01	&	1.98 	\\
				&		&	32	&	8.92E-16	&	2.45E-03	&	2.98 	&	2.91E-01	&	1.82 	&	1.97E+02	&	2.00 	&	2.79E-02	&	1.99 	\\
				&		&	64	&	4.58E-16	&	3.12E-04	&	2.98 	&	8.73E-02	&	1.74 	&	4.92E+01	&	2.00 	&	6.98E-03	&	2.00 	\\

				\Xhline{1pt}
			\end{tabular}
		\end{table}

		\begin{table}[H]\small
	\caption{Example \ref{example1}:  Pressure-robustness: Errors and observed convergence orders for the dual pressure $q$,  dual state $\bm z$,  and its flux $\mathbb G$ by using the new HDG formulation \eqref{HDG_discrete2}.}
			\label{Table2}
			\centering
			\begin{tabular}{c|c|c|c|c|c|c|c|c|c}
				\Xhline{1pt}
				
				\multirow{2}{*}{$k$} &
				\multirow{2}{*}{$n$} &
				\multirow{2}{*}{$\frac{\sqrt{2}}{h}$} &
				\multirow{2}{*}{$\textup{div}(\bm z_h)$}&
				
				\multicolumn{2}{c|}{$\|\bm z-\bm z_h\|_{L^2(\Omega)}$} &

				\multicolumn{2}{c|}{$\|\mathbb G-\mathbb G_h\|_{L^2(\Omega)}$}&
				
				\multicolumn{2}{c}{$\|q-q_h\|_{L^2(\Omega)}$}

				\\
				
				\cline{5-10}
				& & 	& &Error &Rate  &Error &Rate &Error &Rate    \\

				\Xhline{1pt}
				
				&		&	4	&	1.11E-16	&	8.51E-01	&		&	6.27E+00	&		&	1.31E+01	&		\\
				&		&	8	&	5.55E-17	&	2.47E-01	&	1.78 	&	3.37E+00	&	0.90 	&	6.59E+00	&	0.99 	\\
				0	&	$2$	&	16	&	3.23E-17	&	6.55E-02	&	1.92 	&	1.71E+00	&	0.97 	&	3.29E+00	&	1.00 	\\
				&		&	32	&	2.31E-17	&	1.68E-02	&	1.96 	&	8.60E-01	&	1.00 	&	1.64E+00	&	1.00 	\\
				&		&	64	&	1.61E-17	&	4.24E-03	&	1.98 	&	4.30E-01	&	1.00 	&	8.20E-01	&	1.00 	\\
				\Xhline{1pt}
				
				&		&	4	&	1.11E-16	&	8.51E-01	&		&	6.27E+00	&		&	1.30E+03	&		\\
				&		&	8	&	5.55E-17	&	2.47E-01	&	1.78 	&	3.37E+00	&	0.90 	&	6.53E+02	&	0.99 	\\
				0	&	$4$	&	16	&	4.36E-17	&	6.55E-02	&	1.92 	&	1.71E+00	&	0.97 	&	3.27E+02	&	1.00 	\\
				&		&	32	&	2.29E-17	&	1.68E-02	&	1.96 	&	8.60E-01	&	1.00 	&	1.64E+02	&	1.00 	\\
				&		&	64	&	1.39E-17	&	4.24E-03	&	1.98 	&	4.30E-01	&	1.00 	&	8.18E+01	&	1.00 	\\
				\Xhline{1pt}
				
				&		&	4	&	1.11E-16	&	8.51E-01	&		&	6.27E+00	&		&	1.30E+05	&		\\
				0	&		&	8	&	5.55E-17	&	2.47E-01	&	1.78 	&	3.37E+00	&	0.90 	&	6.53E+04	&	0.99 	\\
				&	$6$	&	16	&	3.72E-17	&	6.55E-02	&	1.92 	&	1.71E+00	&	0.97 	&	3.27E+04	&	1.00 	\\
				&		&	32	&	2.08E-17	&	1.68E-02	&	1.96 	&	8.60E-01	&	1.00 	&	1.64E+04	&	1.00 	\\
				&		&	64	&	1.39E-17	&	4.24E-03	&	1.98 	&	4.30E-01	&	1.00 	&	8.18E+03	&	1.00 	\\
				\Xhline{1pt}
				
				&		&	4	&	1.59E-16	&	1.62E-01	&		&	1.93E+00	&		&	1.49E+00	&		\\
				&		&	8	&	1.34E-16	&	2.12E-02	&	2.93 	&	5.06E-01	&	1.93 	&	3.60E-01	&	2.05 	\\
				1	&	$2$	&	16	&	9.26E-17	&	2.70E-03	&	2.97 	&	1.28E-01	&	1.99 	&	8.76E-02	&	2.04 	\\
				&		&	32	&	6.39E-17	&	3.41E-04	&	2.99 	&	3.20E-02	&	2.00 	&	2.17E-02	&	2.02 	\\
				&		&	64	&	3.92E-17	&	4.27E-05	&	3.00 	&	8.01E-03	&	2.00 	&	5.39E-03	&	2.01 	\\
				\Xhline{1pt}
				&		&	4	&	1.64E-16	&	1.62E-01	&		&	1.93E+00	&		&	1.25E+02	&		\\
				&		&	8	&	1.30E-16	&	2.12E-02	&	2.93 	&	5.06E-01	&	1.93 	&	3.14E+01	&	1.99 	\\
				1	&	$4$	&	16	&	8.68E-17	&	2.70E-03	&	2.97 	&	1.28E-01	&	1.99 	&	7.87E+00	&	2.00 	\\
				&		&	32	&	6.72E-17	&	3.41E-04	&	2.99 	&	3.20E-02	&	2.00 	&	1.97E+00	&	2.00 	\\
				&		&	64	&	3.84E-17	&	4.27E-05	&	3.00 	&	8.01E-03	&	2.00 	&	4.92E-01	&	2.00 	\\
				\Xhline{1pt}
				&		&	4	&	1.64E-16	&	1.62E-01	&		&	1.93E+00	&		&	1.25E+04	&		\\
				1	&		&	8	&	1.26E-16	&	2.12E-02	&	2.93 	&	5.06E-01	&	1.93 	&	3.14E+03	&	1.99 	\\
				&	$6$	&	16	&	8.52E-17	&	2.70E-03	&	2.97 	&	1.28E-01	&	1.99 	&	7.87E+02	&	2.00 	\\
				&		&	32	&	6.37E-17	&	3.41E-04	&	2.99 	&	3.20E-02	&	2.00 	&	1.97E+02	&	2.00 	\\
				&		&	64	&	3.95E-17	&	4.27E-05	&	3.00 	&	8.01E-03	&	2.00 	&	4.92E+01	&	2.00 	\\

				\Xhline{1pt}
			\end{tabular}
		\end{table}

	}

	\begin{example}\label{example2}
		Next, we test the problem  {with unknown true solutions}. We use the same data from \cite[Example 5.1]{GongStokes_Tangential1}. We set $\Omega=(0,0.125)^2$, $\color{black}\bm f= \bm 0$, and $\gamma=1$. To  {show that} our HDG method is pressure-robust, we   {perturb} the target state $\bm y_d$ by a large gradient field $\phi = 10^6(x+y)$. We take
\begin{align*}
\bm y_d&=200\times 8^3[x^2(1-8x)^2\textcolor{black}{y}(1-8y)(1-16y), -x(1-8x)(1-16x)y^2(1-y)^2]^\top, \\
\widetilde{\bm y_d}&=\bm y_d+ 10^6 [1,1]^\top.
\end{align*}
We denote the corresponding velocity by $\bm y$ and $\widetilde{\bm y}$.  We know the fact that perturbing the external force by a gradient field affects only the pressure, and not the velocity; this was shown in \cite{MR3133522}. Hence, $\bm y = \widetilde{\bm y}$.

We first   {solve} the optimality system  \eqref{optimality_system} by using the HDG method proposed in \cite{GongStokes_Tangential1} with $h = \frac{\sqrt 2}{1024}$and $k=1$ for both $\bm y_d$ and $\widetilde{\bm y_d}$, we compute the difference of $\bm y_h$ and $\widetilde{\bm y_h}$:
\begin{align*}
\|\bm y_h - \widetilde{\bm y_h}\|_{L^2(\Omega)} =  214.
\end{align*}
Next, we use the HDG formulation \eqref{HDG_discrete2} in this paper, and we have
\begin{align*}
\|\bm y_h - \widetilde{\bm y_h}\|_{L^2(\Omega)} =  6.94\times 10^{-7}.
\end{align*}

We see that the algorithm proposed and analyzed in \cite{GongStokes_Tangential1} is not pressure-robust; while the algorithm \eqref{HDG_discrete2} is pressure-robust.

\end{example}
	\section{Conclusion}
{	
	In \cite{GongStokes_Tangential1}, we used an existing HDG method to approximate the solution of a tangential Dirichlet boundary control problem for the Stokes system. The velocities were not in  $\bm H(\textup{div}; \Omega)$ and the error estimates depended on the pressures. 	In this work, we  devised a new globally divergence free and pressure-robust HDG method for solving this problem. We proved that the discrete velocity belongs to $\bm H(\textup{div}; \Omega)$ and is globally  divergence free. Furthermore, our error estimates show that the errors for the control and velocities do not depend on the pressures.
	
	As far as we are aware, this is the first work to obtain a global divergence free and pressure-robust numerical method for an optimal boundary control problem involving Stokes equations. In the future, we will consider  devising pressure-robust numerical methods when  using an energy space for the control \cite{GongStokes_energy1}. Besides that, we plan to devise divergence free and pressure-robust HDG schemes for  more complicated PDEs, such as the Oseen and Navier-Stokes equations; and apply the methods to other PDE optimal control problems.}


\begin{thebibliography}{10}
\expandafter\ifx\csname url\endcsname\relax
  \def\url#1{\texttt{#1}}\fi
\expandafter\ifx\csname urlprefix\endcsname\relax\def\urlprefix{URL }\fi
\expandafter\ifx\csname href\endcsname\relax
  \def\href#1#2{#2} \def\path#1{#1}\fi

\bibitem{MR1613897}
P.~You, Z.~Ding, J.~Zhou,
  {Optimal boundary control of
  the {S}tokes fluids with point velocity observations}, SIAM J. Control Optim.
  36~(3) (1998) 981--1004.
\newline\urlprefix\url{https://doi.org/10.1137/S0363012996300276}

\bibitem{MR1871460}
M.~Hinze, K.~Kunisch, {Second
  order methods for optimal control of time-dependent fluid flow}, SIAM J.
  Control Optim. 40~(3) (2001) 925--946.
\newline\urlprefix\url{https://doi.org/10.1137/S0363012999361810}

\bibitem{MR1885709}
T.~Tachim~Medjo, {Iterative
  methods for a class of robust control problems in fluid mechanics}, SIAM J.
  Numer. Anal. 39~(5) (2001/02) 1625--1647.
\newline\urlprefix\url{https://doi.org/10.1137/S0036142900381679}

\bibitem{MR2051068}
T.~Tachim~Medjo, L.~R. Tcheugoue~Tebou,
  {Adjoint-based iterative
  method for robust control problems in fluid mechanics}, SIAM J. Numer. Anal.
  42~(1) (2004) 302--325.
\newline\urlprefix\url{https://doi.org/10.1137/S0036142902416231}

\bibitem{MR2326293}
G.~Mathew, I.~Mezi\'c, S.~Grivopoulos, U.~Vaidya, L.~Petzold,
  {Optimal control of mixing in
  {S}tokes fluid flows}, J. Fluid Mech. 580 (2007) 261--281.
\newline\urlprefix\url{https://doi.org/10.1017/S0022112007005332}

\bibitem{MR2974748}
N.~Arada, {Optimal control of
  shear-thinning fluids}, SIAM J. Control Optim. 50~(4) (2012) 2515--2542.
\newline\urlprefix\url{https://doi.org/10.1137/110842612}

\bibitem{MR3233093}
L.~Bavnas, M.~Klein, A.~Prohl, {Control
  of interface evolution in multiphase fluid flows}, SIAM J. Control Optim.
  52~(4) (2014) 2284--2318.
\newline\urlprefix\url{https://doi.org/10.1137/120896530}

\bibitem{MR3357635}
P.~Kuberry, H.~Lee, {Analysis of a
  fluid-structure interaction problem recast in an optimal control setting},
  SIAM J. Numer. Anal. 53~(3) (2015) 1464--1487.
\newline\urlprefix\url{https://doi.org/10.1137/140958220}

\bibitem{MR3663000}
M.~Hinterm\"uller, T.~Keil, D.~Wegner,
  {Optimal control of a semidiscrete
  {C}ahn-{H}illiard-{N}avier-{S}tokes system with nonmatched fluid densities},
  SIAM J. Control Optim. 55~(3) (2017) 1954--1989.
\newline\urlprefix\url{https://doi.org/10.1137/15M1025128}

\bibitem{GongStokes_Tangential1}
W.~Gong, W.~Hu, M.~Mateos, J.~R. Singler, Y.~Zhang,
  {Analysis of a hybridizable
  discontinuous {G}alerkin scheme for the tangential control of the {S}tokes
  system}, ESAIM Math. Model. Numer. Anal. 54~(6) (2020) 2229--2264.
\newline\urlprefix\url{https://doi.org/10.1051/m2an/2020015}

\bibitem{MR3683678}
V.~John, A.~Linke, C.~Merdon, M.~Neilan, L.~Rebholz,
  {On the divergence constraint in
  mixed finite element methods for incompressible flows}, SIAM Rev. 59~(3)
  (2017) 492--544.
\newline\urlprefix\url{https://doi.org/10.1137/15M1047696}

\bibitem{MR3133522}
A.~Linke, {On the role of the
  {H}elmholtz decomposition in mixed methods for incompressible flows and a new
  variational crime}, Comput. Methods Appl. Mech. Engrg. 268 (2014) 782--800.
\newline\urlprefix\url{https://doi.org/10.1016/j.cma.2013.10.011}

\bibitem{MR2304270}
B.~Cockburn, G.~Kanschat, D.~Sch\"{o}tzau,
  {A note on discontinuous
  {G}alerkin divergence-free solutions of the {N}avier-{S}tokes equations}, J.
  Sci. Comput. 31~(1-2) (2007) 61--73.
\newline\urlprefix\url{https://doi.org/10.1007/s10915-006-9107-7}

\bibitem{MR2485455}
B.~Cockburn, J.~Gopalakrishnan, R.~Lazarov,
  {Unified hybridization of
  discontinuous {G}alerkin, mixed, and continuous {G}alerkin methods for second
  order elliptic problems}, SIAM J. Numer. Anal. 47~(2) (2009) 1319--1365.
\newline\urlprefix\url{http://dx.doi.org/10.1137/070706616}

\bibitem{Cockburn2021}
B.~Cockburn, The pursuit of a dream, francisco javier sayas and the hdg
  methods, SeMA Journal (2021).
\newblock \href {https://doi.org/10.1007/s40324-021-00273-y}
  {\path{doi:10.1007/s40324-021-00273-y}}.

\bibitem{MR3511719}
C.~Lehrenfeld, J.~Sch\"{o}berl,
  {High order exactly
  divergence-free hybrid discontinuous {G}alerkin methods for unsteady
  incompressible flows}, Comput. Methods Appl. Mech. Engrg. 307 (2016)
  339--361.
\newline\urlprefix\url{https://doi.org/10.1016/j.cma.2016.04.025}

\bibitem{MR3942180}
P.~L. Lederer, C.~Lehrenfeld, J.~Sch\"{o}berl,
  {Hybrid discontinuous {G}alerkin
  methods with relaxed {$H({\rm div})$}-conformity for incompressible flows.
  {P}art {II}}, ESAIM Math. Model. Numer. Anal. 53~(2) (2019) 503--522.
\newline\urlprefix\url{https://doi.org/10.1051/m2an/2018054}

\bibitem{MR3833698}
S.~Rhebergen, G.~N. Wells, {A
  hybridizable discontinuous {G}alerkin method for the {N}avier-{S}tokes
  equations with pointwise divergence-free velocity field}, J. Sci. Comput.
  76~(3) (2018) 1484--1501.
\newline\urlprefix\url{https://doi.org/10.1007/s10915-018-0671-4}

\bibitem{MR4019980}
K.~L.~A. Kirk, S.~Rhebergen,
  {Analysis of a
  pressure-robust hybridized discontinuous {G}alerkin method for the stationary
  {N}avier-{S}tokes equations}, J. Sci. Comput. 81~(2) (2019) 881--897.
\newline\urlprefix\url{https://doi.org/10.1007/s10915-019-01040-y}

\bibitem{baierreinio2021analysis}
A.~Baier-Reinio, S.~Rhebergen, G.~N. Wells, Analysis of pressure-robust
  embedded-hybridized discontinuous {G}alerkin methods for the {S}tokes problem
  under minimal regularity (2021).
\newblock \href {http://arxiv.org/abs/2110.10611} {\path{arXiv:2110.10611}}.

\bibitem{RhebergenWells2020}
S.~Rhebergen, G.~N. Wells, {An
  embedded-hybridized discontinuous {G}alerkin finite element method for the
  {S}tokes equations}, Comput. Methods Appl. Mech. Engrg. 358 (2020) 112619,
  18.
\newline\urlprefix\url{https://doi.org/10.1016/j.cma.2019.112619}

\bibitem{rhebergen2021preconditioning}
S.~Rhebergen, G.~N. Wells, Preconditioning for a pressure-robust hdg
  discretization of the stokes equations, arXiv preprint arXiv:2105.09152
  (2021).

\bibitem{MR4161756}
P.~L. Lederer, S.~Rhebergen, {A
  pressure-robust embedded discontinuous {G}alerkin method for the {S}tokes
  problem by reconstruction operators}, SIAM J. Numer. Anal. 58~(5) (2020)
  2915--2933.
\newline\urlprefix\url{https://doi.org/10.1137/20M1318389}

\bibitem{Merdon-Wollner2022}
C.~Merdon, W.~Wollner, Pressure-robustness in the context of optimal control
  (2022).
\newblock \href {http://arxiv.org/abs/2203.02224} {\path{arXiv:2203.02224}}.

\bibitem{HuShenSinglerZhangZheng_HDG_Dirichlet_control1}
W.~Hu, J.~Shen, J.~R. Singler, Y.~Zhang, X.~Zheng, A superconvergent
  hybridizable discontinuous {G}alerkin method for {D}irichlet boundary control
  of elliptic {PDE}sArXiv:1712.02931, submitted.

\bibitem{Duran2008}
R.~G. Dur\'{a}n, {Mixed
  Finite Element Methods}, Springer Berlin Heidelberg, Berlin, Heidelberg,
  2008, pp. 1--44.
\newline\urlprefix\url{https://doi.org/10.1007/978-3-540-78319-0_1}

\bibitem{MR799685}
F.~Brezzi, J.~Douglas, Jr., L.~D. Marini,
  {Two families of mixed finite
  elements for second order elliptic problems}, Numer. Math. 47~(2) (1985)
  217--235.
\newline\urlprefix\url{https://doi.org/10.1007/BF01389710}

\bibitem{MR3626531}
A.~Cesmelioglu, B.~Cockburn, W.~Qiu,
  {Analysis of a hybridizable
  discontinuous {G}alerkin method for the steady-state incompressible
  {N}avier-{S}tokes equations}, Math. Comp. 86~(306) (2017) 1643--1670.
\newline\urlprefix\url{http://dx.doi.org/10.1090/mcom/3195}

\bibitem{MR3992054}
G.~Chen, J.~R. Singler, Y.~Zhang, An {HDG} method for {D}irichlet boundary
  control of convection dominated diffusion {PDE}s, SIAM J. Numer. Anal. 57~(4)
  (2019) 1919--1946.
\newblock \href {https://doi.org/10.1137/18M1208708}
  {\path{doi:10.1137/18M1208708}}.

\bibitem{HuMateosSinglerZhangZhang2}
W.~Hu, M.~Mateos, J.~R. Singler, X.~Zhang, Y.~Zhang, {A} new {HDG} method for
  {D}irichlet boundary control of convection diffusion {PDE}s {II}: low
  regularity
  \newblock \href {http://arxiv.org/abs/1801.01056} {\path{ArXiv:1801.01056}}.

\bibitem{GongStokes_energy1}
W.~Gong, M.~Mateos, J.~Singler, Y.~Zhang,
  {Analysis and approximations of
  dirichlet boundary control of stokes flows in the energy space}, SIAM Journal
  on Numerical Analysis 60~(1) (2022) 450--474.
\newline\urlprefix\url{https://doi.org/10.1137/21M1406799}

\bibitem{MR3070527}
S.~May, R.~Rannacher, B.~Vexler, {Error
  analysis for a finite element approximation of elliptic {D}irichlet boundary
  control problems}, SIAM J. Control Optim. 51~(3) (2013) 2585--2611.
\newline\urlprefix\url{https://doi.org/10.1137/080735734}

\end{thebibliography}
\end{document}